\newcounter{sarrow}
\newcommand\xrsquigarrow[1]{%
\stepcounter{sarrow}%
\mathrel{\begin{tikzpicture}[baseline= {( $ (current bounding box.south) + (0,-0.5ex) $ )}]
\node[inner sep=.5ex] (\thesarrow) {$\scriptstyle #1$};
\path[draw,<-,decorate,
  decoration={zigzag,amplitude=0.7pt,segment length=1.2mm,pre=lineto,pre length=4pt}] 
    (\thesarrow.south east) -- (\thesarrow.south west);
\end{tikzpicture}}%
}
\theoremstyle{plain}
  \newtheorem{thm}{Theorem}[section]
  \newtheorem{defn}{Definition}[section]
  \newtheorem{prop}{Proposition}[section]
  \newtheorem*{notation}{Notation}
\theoremstyle{definition}
  \newtheorem*{rem}{Remark}
\newcommand{\on}{\operatorname}
\newcommand{\g}{\mathfrak{g}}
\newcommand{\Hom}{\operatorname{Hom}}
\newcommand{\la}{\langle}
\newcommand{\ra}{\rangle}
\newcommand{\R}{\mathbb{R}}
\newcommand{\rsa}{\rightsquigarrow}
\newcommand{\xrsa}{\xrsquigarrow}
\tikzset{->-/.style={decoration={
  markings,
  mark=at position #1 with {\arrow{>}}},postaction={decorate}}}
\title{Integration of differential graded manifolds}
\author{Pavol \v Severa}
\address{Department of Mathematics, Universit\'e de Gen\`eve, Geneva, Switzerland}
\email{pavol.severa@gmail.com}
\author{Michal \v Sira\v n}
\address{Department of Theoretical Physics, Fakulta matematiky, fyziky a informatiky,
Univerzita Komensk\'eho,
Mlynsk\'a dolina F1,
842 48 Bratislava, Slovakia}
\email{miso.siran@gmail.com}
\thanks{Supported in part by  the grant MODFLAT of the European Research Council and the NCCR SwissMAP of the Swiss National Science Foundation.}
\begin{document}
\maketitle

\begin{abstract}
We consider the problem of integration of $L_\infty$-algebroids (differential non-nega\-tively graded manifolds) to $L_\infty$-groupoids.
We first construct a  ``big'' Kan simplicial manifold (Fr\'echet or Banach) whose points are solutions of a (generalized) Maurer-Cartan equation.
   The main analytic trick in our work is an integral transformation sending the solutions of the Maurer-Cartan equation to closed differential forms. 
   
   Following ideas of Ezra Getzler we then impose a gauge condition which cuts out a finite-dimensional simplicial submanifold.  This ``smaller'' simplicial manifold is (the nerve of) a  local Lie $\ell$-groupoid. The gauge condition can be imposed only locally in the base of the $L_\infty$-algebroid; the resulting local $\ell$-groupoids glue  up to a coherent homotopy, i.e. we get  a homotopy coherent diagram from the nerve of a good cover of the base to the (simplicial) category of local $\ell$-groupoids.

Finally we show that a $k$-symplectic differential non-negatively graded manifold integrates to a local $k$-symplectic Lie $\ell$-groupoid; globally these assemble to form an $A_\infty$-functor. As a particular case for $k=2$ we obtain integration of Courant algebroids.
\end{abstract}
\tableofcontents

\section{Introduction}\label{sec:prel}

Let us recall that a Lie bracket on a finite-dimensional vector space $\g$ is equivalent to a differential $Q$ on the graded-commutative algebra $\bigwedge\g^*=S((\g[1])^*)$. One possible approach to the construction of the integrating simply-connected group $G$ is due to Dennis Sullivan \cite{Sullivan}. Consider the simplicial set of morphisms of differential graded commutative algebras
$$K^\text{big}_\bullet=\text{Hom}_{dgca}\bigl((\textstyle\bigwedge\g^*, Q),(\Omega(\Delta^\bullet),d)\bigr)$$
where $\Delta^\bullet$ is the Euclidean simplex and $d$ is the de Rham differential. It can be shown that $K^\text{big}_\bullet$ is in fact a simplicial manifold and that its simplicial fundamental group is $\pi^{\text{simpl}}_1(K^\text{big}_\bullet)\cong G$. 

This can be geometrically explained as follows. Let $\xi^i$ be a basis of $\g^*$ and  $c^i_{jk}$ the structure constants of $\g$ in this basis, so that $Q\xi^i=c^i_{jk}\xi^j\xi^k/2$. An $n$-simplex $\mu\in K^\text{big}_n$ is determined by
$$A^i:=\mu(\xi^i),\ \ \ \ i=1,\ldots, \text{dim}\ \mathfrak{g},$$ 
which is a collection of $1-$forms on $\Delta^n$. Since $\mu$ respects the differentials we get
\begin{equation}\label{eq:MC}
dA^i=\frac{1}{2}c^i_{jk}A^jA^k.
\end{equation}
This is the Maurer-Cartan equation 
so $A$ is a flat $\mathfrak{g}$-connection on $\Delta^n$. In other words, we can identify the elements of $K^\text{big}_n$ with flat $\g$-connections on $\Delta^n$.

The integration of $\g$ to $G$ can be described as follows. A connection $A$ on an interval (automatically flat) will give rise to an element $g_A$ of the integrating group (the holonomy of the connection along the interval). Two such elements $g_A, g_{A'}$ are equal iff there exists a flat connection $A\tikz[baseline]\draw(0,0)to[out=-60,in=-120](1.5ex,0) (0,0)to[out=60,in=120](1.5ex,0);$ on a bigon which restricts on the boundary arcs to $A$ and $A'$ as on the left side of Figure \ref{fig:disc}. The group multiplication is given as $g_{A_1} g_{A_2}=g_{A_3}$ iff there exists a flat connection $A_\triangle$ on a triangle which restricts on the boundary in the manner of right side of Figure \ref{fig:disc}.

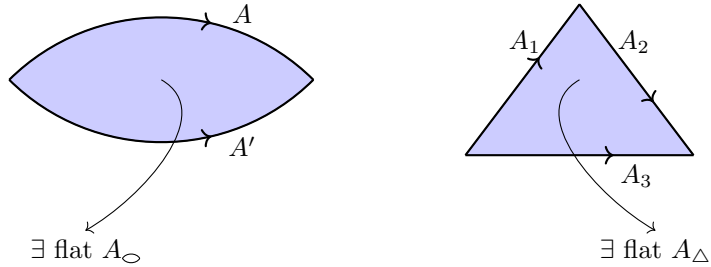
\begin{figure}[h!tb]
\center
\begin{tikzpicture}
\filldraw[fill=blue!20]
(0,0) to[out=45,in=135] (4,0) to[out=-135,in=-45] (0,0);
\draw [->-=0.65, thick] (0,0) to[out=45, in=135] node[anchor=north, near end,above]{$A$} (4,0) ;
\draw [->-=0.65, thick] (0,0) to[out=-45,in=-135] node[anchor=south, near end, below]{$A'$}(4,0);
\draw [->] (2,0) to[out=-30, in=30] (1,-2) node[anchor=south, below]{$\exists\ \text{flat}\ A\tikz[baseline]\draw[-](0,0)to[out=-60,in=-120](1.5ex,0) (0,0)to[out=60,in=120](1.5ex,0);$};
\filldraw[fill=blue!20]
(6,-1) -- (7.5,1) -- (9,-1);
\draw[->-=0.65, thick] (6,-1) -- (7.5,1) node[anchor=west,above,left,near end]{$A_1$};
\draw[->-=0.65, thick] (7.5,1) -- (9,-1) node[anchor=north,above,right, near start]{$A_2$};
\draw[->-=0.65, thick] (6,-1) -- (9,-1) node[anchor=south,below,near end]{$A_3$};
\draw[->] (7.5,0) to[out=210,in=150] (8.5,-2) node[anchor=south,below]{$\exists\ \text{flat}\ A_\triangle$};
\end{tikzpicture}
\caption{Left - two connections give the same group element. Right - group multiplication illustrated.}
\label{fig:disc}
\end{figure}

Let us note that the manifolds $K^\text{big}_n$ are infinite dimensional. For any $g\in G$ there are infinitely many $A$'s on the interval such that $g=g_A$. Likewise, if $g_{A_1} g_{A_2}=g_{A_3}$, there are infinitely many $A_\triangle$'s proving this relation. However,  $K^\text{big}_\bullet$ can be reduced by gauge-fixing to a ``smaller'' (finite dimensional) simplicial submanifold $K^s_\bullet$, which is a (local) simplicial deformation retract of $K^\text{big}_\bullet$ and which is (locally) isomorphic to the nerve of $G$ (which means a unique $A$ such that $g=g_A$, and also a unique $A_\triangle$ restricting to $A_1,A_2,A_3$ on the boundary). $K^s_\bullet$ is the nerve of the local Lie group integrating $\g$.

This paper generalizes this procedure to cases where $(\bigwedge\g^*,Q)$ is replaced by a more general differential graded-commutative algebra. More precisely, we consider generalizations involving the introduction of generators in degrees $i\geq 0$ and allowing smooth functions of the degree-0 generators rather than just polynomials.

Let us thus consider  non-negatively graded commutative algebras of the form
$$\mathcal A_V=\Gamma(S(V^*)),$$
where $V\to M$ is a negatively graded vector bundle.
Suppose that $Q$ is a differential on the algebra $\mathcal A_V$. We can say that $\mathcal A_V$ is the algebra of functions on the  graded manifold $V$.

If the manifold $M$ is a point then the differential $Q$ is equivalent to an $L_\infty$-algebra structure on the non-positively graded vector space $V[-1]$. In the case of a general $M$ and of $V$ concentrated in degree $-1$, a differential $Q$ is equivalent to a Lie algebroid structure on $V[-1]$. In the general case, the differential $Q$ is loosely called a $L_\infty$-algebroid structure on the vector bundle $V[-1]$, or more precisely a Lie $\ell$-algebroid, where  $-\ell$ is the lowest degree in the negatively graded bundle $V$ (so that $\ell\geq 1$). In this paper we will integrate these Lie $\ell$-algebroids to  Lie $\ell$-groupoids. 

Following Sullivan as above we shall study morphisms of differential graded algebras
\begin{equation}\label{eq:mor}
(\mathcal A_V,Q)\to(\Omega(\Delta^n),d),
\end{equation}
or equivalently, morphisms of differential graded manifolds
$$T[1]\Delta^n\to V.$$
We can view $L_\infty$-algebroids as Lie algebroids with higher homotopies and the integration procedure as recovering the fundamental $\infty-$groupoid. This integration procedure was suggested in \cite{Severa}. The motivation comes primarily from the problem of integration of Courant algebroids. Poisson manifolds are integrated to (local) symplectic groupoids and Courant algebroids should be integrated to symplectic 2-groupoids.

While these ideas are well known, in this work we finally overcome the long-standing analytic difficulties.
Our first result (Theorem \ref{ref:kbig}) says that  the morphisms \eqref{eq:mor} form naturally a (infinite-dimensional Fr\'echet) Kan simplicial manifold $K^{big}_\bullet$. This result is obtained via an integral transformation (similar to a transformation appearing in the work of Masatake Kuranishi \cite{Kuranishi}) taking morphisms \eqref{eq:mor} to morphisms
$$
(\mathcal A_V,0)\to(\Omega(\Delta^n),d)
$$
which is interesting on its own.

In more detail, in the simplest case when $M$ is an open subset of $\R^n$ and $V\to M$ is a trivial graded vector bundle, we choose generators $\xi^i$ of the algebra $\mathcal A_V$ ($\xi^i$'s of degree 0 are the coordinates of $M\subset \R^n$ and $\xi^i$'s of positive degree come from a basis of the fibre of $V$) then a morphism $\mu$ of dgcas \eqref{eq:mor} is equivalent to a collection of differential forms $A^i=\mu(\xi^i)\in\Omega^{\deg\xi^i}(\Delta^\bullet)$ satisfying a generalized Maurer-Cartan equation
\begin{equation}\label{eq:genMCintro}
d A^i=C_Q^i(A)
\end{equation}
where $C_Q^i:=Q\xi^i\in \mathcal A_V$ and $C_Q^i(A)=\mu(C_Q^i)\in\Omega(\Delta^\bullet)$ is obtained from $C_Q^i$ by substituting $A^j$'s for $\xi^j$'s (in the case of a Lie algebra, as discussed above, we have $C_Q^i=Q\xi^i=c^i_{jk}\xi^j\xi^k/2$). The integral transformation is $A^i\mapsto \kappa(A)^i:=A^i-hC_Q^i(A)$, where $h$ is the de Rham homotopy operator, and it transforms the equation \eqref{eq:genMCintro} to
$$d\kappa(A)^i=0.$$
We prove that $\kappa$ is an open embedding (of Banach or Fr\'echet spaces), and thus show the regularity properties of the space of the solutions of \eqref{eq:genMCintro}.

As noted in the Lie algebra case, the integration can be reduced by gauge-fixing. Following ideas of Ezra Getzler \cite{Getzler} we  define (locally in $M$) a finite-dimensional locally Kan simplicial manifold $K^s_\bullet\subset K^{big}_\bullet$ by imposing a certain gauge condition $s_\bullet$ on differential forms (see Theorem \ref{thm:ksmall})
$$K^s_\bullet=\{(A^i\in\Omega^{\deg\xi^i}(\Delta^\bullet));d A^i=C_Q^i(A)\text{ and }s_\bullet A^i=0\}.$$
The simplicial manifold $K^s_\bullet$ can be seen as (the nerve of) a local Lie $\ell$-groupoid (Definition \ref{def:l-groupoid}) integrating the Lie $\ell$-algebroid structure on $V$ ($-\ell$ is the lowest degree in $V$). While $K^s_\bullet$ depends on the choice of a gauge condition $s_\bullet$ (in particular, on a choice of local coordinates and of a local trivialization of $V$), we construct a (local) simplicial deformation retraction  of $K^{big}_\bullet$ to $K^s_\bullet$, i.e.\ show that $K^{big}_\bullet$ and $K^s_\bullet$ are equivalent as local Lie $\infty$-groupoids. The deformation retraction also implies that $K^s_\bullet$ is unique up to (non-unique) isomorphisms and that the local $K^s_\bullet$'s form a homotopy coherent diagram.

Let us relate our approach with the papers of Henriques \cite{Henriques} and Getzler \cite{Getzler}, who solve closely related problems. Andr\'e Henriques deals in \cite{Henriques} with the case when the base manifold $M$ is a point. He defined Kan simplicial manifolds and Lie $\ell$-groupoids, which are central definitions in our work. His constructions are based on Postnikov towers: in his case it is enough to integrate a Lie algebra to a Lie group and then to deal with Lie algebra cocycles. This approach cannot be used for non-trivial $M$, so there is little overlap between his and our methods.

The present paper is  closer to the work of Ezra Getzler \cite{Getzler} who deals with nilpotent $L_\infty$-algebras (or, from the point of view of Rational homotopy theory \cite{Sullivan}, with finitely-generated Sullivan algebras). In particular, the idea of gauge fixing is simply taken from \cite{Getzler} and translated from formal power series to the language of Banach manifolds. 

Our paper is also closely related to the work of Crainic and Fernandes \cite{CrainicFernandes} on integration of Lie algebroids (corresponding to $\ell=1$). Unlike in \emph{op.\ cit.}\ we do not consider the truncation of $K^{big}_\bullet$ at dimension $\ell$ (keeping all simplices of dimension $<\ell$ intact, replacing those of dimension $\ell$ with their homotopy classes rel boundary, and adding higher simplices formally), as it leads, for $\ell\geq2$, to infinite-dimensional spaces. Nonetheless our analytic results should be sufficient also for this kind of approach.

The plan of our paper is as follows. In Section \ref{sec:dg-mor} we recall some basic definitions concerning NQ manifolds (differential non-negatively graded manifolds). In Section \ref{sec:homotopy} we prove a technical result about homotopies of maps between NQ manifolds, which is the basis for our analytic theorems. In Section \ref{sec:kuranishi} we define the Kuranishi map $\kappa$ and show that it transforms the Maurer-Cartan equations \eqref{eq:genMCintro} to linear equations. In this way we then prove that the spaces of solutions are manifolds. In Section \ref{sec:fh} we prove that the spaces of the solutions of the generalized Maurer-Cartan \eqref{eq:genMCintro} on simplices form a Kan simplicial manifold (i.e.\ a Lie $\infty$-groupoid) and in Section \ref{sec:glob} we globalize the results of Sections \ref{sec:kuranishi} and \ref{sec:fh}. (Sections \ref{sec:fh} and \ref{sec:glob} are not needed for the rest of the paper, but they complete the picture.) In Section \ref{sec:gauge} we show that gauge fixing produces a finite-dimensional local Lie $\ell$-groupoid. In Section \ref{sec:def-retr} we establish a deformation retraction from the big integration to the gauge integration, which is then used in Section \ref{sec:funct} to show that the gauge integration is functorial up to coherent homotopies. Section \ref{sec:sympl} concerns integration of (pre)symplectic forms on a NQ manifold. Closing the paper with Section \ref{sec:sympl-funct} we describe the $A_\infty$-functoriality of the local integration of symplectic NQ manifolds; as an example we describe the integration of Courant algebroids.

\subsection*{Acknowledgment}
We would like to thank Ezra Getzler for useful discussion.

\section{dg morphisms and NQ manifolds}\label{sec:dg-mor}
Let $M$ be a manifold and $V\to M$ a negatively-graded vector bundle. Throughout the paper all vector bundles are finite dimensional.  Let $\mathcal A_V$ be the graded commutative algebra
$$\mathcal A_V=\Gamma(S(V^*)).$$
In particular $\Omega(M)=\mathcal A_{T[1]M}$. We can describe morphisms of graded-commutative algebras
$$\mathcal A_V\to\Omega(N)$$
(where $N$ is another manifold) in the following way.

If $W$ is a non-positively graded vector space and $N$ a manifold, let
\begin{equation}\label{eq:omegaw}
\Omega(N,W)^m={\textstyle\bigoplus_{k\geq0}}\Omega^{m+k}(N)\otimes W^{-k}.
\end{equation}
Abusing the notation, if $V\to M$ is a negatively graded vector bundle, let
$$\Omega(N,V)^0=\{(f,\alpha);\,f:N\to M, \alpha\in{\textstyle\bigoplus_{k>0}}\Omega^k(N,f^*V^{-k})\}.$$
The two notations are compatible if we understand $W$ as a trivial vector bundle over $W^0$, with the fiber $W^{<0}$.

A morphism of graded algebras
$$\mu:\mathcal A_V\to\Omega(N)$$
is equivalent to an element  $(f,\alpha)\in\Omega(N,V)^0$ via
$$\mu|_{C^\infty(M)}=f^*,\ \mu(s)=\langle \alpha,f^*s\rangle, \ \ \ \ \forall s\in\Gamma(V^*).$$

If $W$ is a non-positively graded vector space and $U\subset W^0$ is an open subset, let $W|_U$ be the trivial vector bundle $W^{<0}\times U\to U$, i.e.
$$\Omega(N,W|_U)^0\subset\Omega(N,W)^0$$
is the set of those forms $A\in\Omega(N,W)^0$ whose function part (a $W^0$-valued function on $N$) is a map $N\to U\subset W^0$. Notice that
\begin{equation}\label{eq:algloc}
\mathcal A_{W|_U}=C^\infty(U)\otimes S\bigl((W^{<0})^*\bigr).
\end{equation}
In this special case of $V=W|_U$ a morphism $\mu:\mathcal A_{W|_U}\to\Omega(N)$ corresponds to $A\in\Omega(N,W|_U)^0$
via
$$\mu(\xi)=\langle A,\xi\rangle, \ \ \ \ \forall \xi\in W^*.$$

Let now $Q$ be a differential on the graded algebra $\mathcal A_{W|_U}$. If $\xi^i$ is a (homogeneous) basis of $W^*$, let 
$$C^i_Q:=Q\,\xi^i\in\mathcal A_{W|_U}.$$
 The identity $Q^2=0$ is equivalent to
\begin{equation}\label{eq:Q2coord}
C^i_Q\frac{\partial C^k_Q}{\partial \xi^i}=0.
\end{equation}

A morphism of graded algebras $\mu:\mathcal A_{W|_U}\to\Omega(N)$ is a differential graded (dg) morphism iff
\begin{equation}\label{eq:dgmorph}
dA^i=C^i_Q(A)
\end{equation}
where $A^i:=\mu(\xi^i)=\langle A,\xi^i\rangle$ and $C^i_Q(A)$ is obtained from $C^i_Q$ by substituting $A^k$'s for $\xi^k$'s. This equation generalizes the Maurer-Cartan equations of the Lie algebra case (\ref{eq:MC}) where $C^i_Q$ is quadratic.
  We can rewrite \eqref{eq:dgmorph} as
\begin{equation}\label{eq:genmc}
dA=C_Q(A)
\end{equation}
where $C_Q\in  \mathcal A_{W|_U}\otimes W$ is given by 
$$\la C_Q,\xi\ra=Q\xi\ \ \ \ \forall \xi\in W^*.$$

\begin{rem}
If $K$ is another manifold and $s:K\to N$ is a smooth map then $s^*:\Omega(N)\to\Omega(K)$ is a morphism of dg algebras, and so if $\mu:\mathcal A_{W|_U}\to\Omega(N)$ is a dg morphism then $s^*\circ\mu:\mathcal A_{W|_U}\to\Omega(K)$ is a dg morphism. Equivalently, if $dA=C_Q(A)$ then $d(s^*A)=C_Q(s^*A)$. It can also be seen from the obvious identity $s^*C_Q(A)=C_Q(s^*A)$.
\end{rem}

The set of solutions $A$ of \eqref{eq:genmc} will be denoted by
$$\Omega(N,W|_U)^{MC}\subset\Omega(N,W|_U)^0.$$
More generally, if $Q$ is a differential on the algebra $\mathcal A_V$, the set of dg morphisms
$$\mathcal A_V\to\Omega(N)$$
will be denoted by 
$$\Omega(N,V)^{MC}\subset\Omega(N,V)^0.$$

\begin{rem}
We have a natural inclusion $M\subset \Omega(N,V)^{MC}$ (and $U\subset\Omega(N,W|_U)^{MC}$) given by pairs $(f,\alpha)$ where $f:N\to M$ is a constant map and $\alpha=0$. Indeed, these pairs correspond to the (somewhat trivial) dg morphisms $\mathcal A_V\to\Omega(N)$ factoring through $\R$.
\end{rem}

If $V\to M$ is a negatively-graded vector bundle, it is convenient to see the algebra $\mathcal A_V$ as the algebra of functions on the graded manifold (corresponding to) $V$. 

\begin{defn}[e.g. \v Severa \cite{Severa}]
An \emph{N-manifold} (shorthand for \emph{non-negatively graded manifold}) is a supermanifold $Z$ with an action of the semigroup $(\R,\times)$ such that $-1\in\R$ acts as the parity operator (i.e.\ just changes the sign of the odd coordinates).
\end{defn}

 Let $C^\infty(Z)^k$ be the vector space of smooth functions of degree $k$, where the degree is the weight with respect to the action of $(\R,\times)$, and let $C^\infty(Z)=\bigoplus_{k\geq 0} C^\infty(Z)^k$. It is a graded-commutative algebra. 

Any negatively-graded vector bundle $V\to M$ gives rise to a N-manifold via
$$C^\infty(Z)=\Gamma(S(V^*)),$$
such that $0\cdot Z=M$. Any N-manifold (in the $C^\infty$-category) is of this type. By abuse of notation we shall denote the N-manifold corresponding to $V$ also by $V$. A morphism of graded algebras $\mathcal A_V\to\mathcal A_{V'}$ is thus equivalent to a morphism of N-manifolds $V'\rsa V$. We shall indicate morphisms of graded manifolds with wiggly arrows ($\rsa$). This is to prevent confusion since the category of N-manifolds contains  more morphisms then the category of negatively-graded vector bundles.
In particular, the following  are equivalent: a morphism of graded algebras
$$\mathcal A_V\to\Omega(N),$$
an element of $\Omega(N,V)^0$, and a morphism of N-manifolds
$$T[1]N\rsa V.$$ 

A differential $Q$ on the graded algebra $\mathcal A_V$ then corresponds to the following notion.

\begin{defn}
An \emph{NQ-manifold} (a \emph{differential non-negatively graded manifold}) is an $N$-manifold equipped with a vector field $Q$ of degree 1 satisfying $Q^2=0$.
\end{defn}

In particular, a morphism of dg-algebras $\mathcal A_V\to\Omega(N)$ is equivalent to a morphism of NQ-manifolds $T[1]N\rsa V$.

\section{A homotopy lemma}\label{sec:homotopy}

Let us suppose, as above, that $W$ is a non-positively graded finite-dimensional vector space, $U\subset W^0$ is an open subset and  $Q$ a differential on the algebra $\mathcal A_{W|_U}$ defined by \eqref{eq:algloc}. A morphism of graded algebras $\mathcal A_{W|_U}\to\Omega(N)$ (or a map of N-manifolds $T[1]N\rsa W|_U$) is thus equivalent to a choice of a differential form $A\in\Omega(N,W|_U)^0$, and it is a dg morphism iff $dA=C_Q(A)$, i.e. iff $A\in\Omega(N,W|_U)^{MC}$ (see Section \ref{sec:dg-mor}).

In this section we shall study ``homotopies'', i.e.\ solutions of the MC equation $dA=C_Q(A)$ on $N\times I$, where $I$ is the unit interval.

If $\alpha\in\Omega(N\times I)$, let $\alpha|_{t=0}\in\Omega(N)$ (where $t$ is the coordinate on $I=[0,1]$) be the restriction of $\alpha$ to $N=N\times\{0\}\subset N\times I$.  Any differential form $\alpha\in\Omega(N\times I)$ can be split uniquely into a horizontal and vertical part as
$$\alpha=\alpha_h+dt\,\alpha_v$$
$$i_{\partial_t}\alpha_h=i_{\partial_t}\alpha_v=0.$$
The components $\alpha_h$ and $\alpha_v$ are given by 
$$\alpha_v=i_{\partial_t}\alpha,\ \alpha_h=\alpha-dt\,\alpha_v.$$
We can view $\alpha_h$ and $\alpha_v$ as forms on $N$ parametrized by $I$.
For any $\alpha$ we set
$$d_h\alpha=(d\alpha)_h.$$

\begin{prop}[``Homotopy Lemma'']\label{prop:htopy}
Let $N$ be a manifold (possibly with corners) and let $A\in\Omega(N\times I,W|_U)^0$, where $I$ is the unit interval.
If
\begin{subequations}
\begin{equation}\label{eq:Acond1}
A|_{t=0}\in\Omega(N,W|_U)^{MC}
\end{equation}
and if
\begin{equation}\label{eq:Acond2}
i_{\partial_t}(dA-C_Q(A))=0,
\end{equation}
\end{subequations}
then 
$$A\in\Omega(N\times I,W|_U)^{MC}$$
\end{prop}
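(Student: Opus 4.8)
The goal is to show that the full Maurer-Cartan equation $dA = F_Q(A)$ holds on $N \times I$, given that it holds on the slice $t=0$ and that the vertical contraction of the "defect" vanishes. Let me set $E := dA - F_Q(A) \in \Omega(N\times I, W|_U)$, which measures the failure of $A$ to be a dg morphism. I want to prove $E = 0$. I'm given two facts: $E|_{t=0} = 0$ (from \eqref{eq:Acond1}, since $A|_{t=0}$ is already MC) and $i_{\partial_t} E = 0$ (this is exactly \eqref{eq:Acond2}). So $E$ is a horizontal form, $E = E_h$, and it vanishes at $t=0$. I need to propagate the vanishing along the interval.

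**The key idea: a differential equation in $t$.**

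Let me think about this. The natural strategy is to derive a first-order ODE (in $t$, with values in forms on $N$) satisfied by $E_h$, and then invoke uniqueness with the initial condition $E_h|_{t=0}=0$. The two main tools should be: (i) the identity $d^2 A = 0$, and (ii) the quadratic identity \eqref{eq:Q2coord}, namely $F^i_Q \,\partial F^k_Q/\partial\xi^i = 0$, which encodes $Q^2=0$.

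Let me compute $dE$. We have
$$dE = d(dA) - dF_Q(A) = -dF_Q(A).$$
By the chain rule, $dF^k_Q(A) = \frac{\partial F^k_Q}{\partial \xi^i}(A)\, dA^i$ (up to signs from the grading). Now substitute $dA^i = E^i + F^i_Q(A)$:
$$dF^k_Q(A) = \frac{\partial F^k_Q}{\partial \xi^i}(A)\,\bigl(E^i + F^i_Q(A)\bigr) = \frac{\partial F^k_Q}{\partial \xi^i}(A)\,E^i,$$
where the $F^i_Q(A) F^k_{Q,i}(A)$ term dies precisely by \eqref{eq:Q2coord}. So I obtain the linear relation
$$dE^k = -\frac{\partial F^k_Q}{\partial \xi^i}(A)\, E^i.$$

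**Extracting the vertical part and concluding.**

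Now I take the vertical part of this identity, i.e. contract with $i_{\partial_t}$. Writing $dE = d_h E + dt\,\partial_t E_h$ (using $E = E_h$ since $i_{\partial_t}E = 0$), the vertical component of $dE^k$ is $\partial_t E_h^k$. On the right-hand side, since $E^i = E_h^i$ is horizontal, $i_{\partial_t}\bigl(\frac{\partial F^k_Q}{\partial\xi^i}(A) E_h^i\bigr)$ picks up $i_{\partial_t}$ applied to the coefficient form and to $E_h^i$; but $i_{\partial_t}E_h^i = 0$, so only the term where $i_{\partial_t}$ hits the coefficient survives. This yields a linear ODE of the shape
$$\partial_t E_h^k = B^k_i(t)\, E_h^i$$
for a $t$-dependent linear operator $B$ built from $A$ and its derivatives. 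This is a linear first-order homogeneous ODE in $t$ for the $N$-parametrized family $E_h(t)$, with initial condition $E_h|_{t=0}=0$. By uniqueness of solutions of linear ODEs, $E_h \equiv 0$, hence $E = 0$ and $A \in \Omega(N\times I, W|_U)^{MC}$.

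**Main obstacle.**

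The step I expect to require the most care is the bookkeeping of Koszul signs in the chain-rule computation of $dF^k_Q(A)$ and in separating horizontal from vertical parts, since $A^i$ and $E^i$ carry mixed form-degree and internal $W$-degree. In particular I must check that the cancellation against \eqref{eq:Q2coord} goes through with the correct signs, and that $F^i_Q(A)$ is a well-defined convergent expression (it is, since $W$ is finite-dimensional and $U$ finite-dimensional, so $F_Q$ is polynomial in the negative-degree generators with $C^\infty$ coefficients). A secondary point is justifying that "uniqueness for the linear ODE" applies uniformly in the $N$-directions; but since $B^k_i(t)$ has smooth coefficients on $N$ and the equation is pointwise-in-$N$ linear, standard ODE theory with smooth dependence on parameters suffices.
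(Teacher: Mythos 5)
Your proof is correct and follows essentially the same strategy as the paper: both arguments show that the Maurer--Cartan defect is horizontal, derive for it a linear homogeneous ODE in $t$ (with coefficients built from $A_v^j\,\frac{\partial^2 F_Q}{\partial\xi^j\partial\xi^i}(A_h)$) whose validity rests on the coordinate form \eqref{eq:Q2coord} of $Q^2=0$, and conclude by uniqueness for linear ODEs with zero initial condition. The only difference is bookkeeping: the paper computes $\frac{d}{dt}\bigl(d_hA_h-F_Q(A_h)\bigr)$ using the evolution equation \eqref{eq:dotAh} for $A_h$ together with the differentiated form of \eqref{eq:Q2coord}, whereas you reach the identical ODE by computing $dE$ from $d^2A=0$ and the chain rule, cancelling via the undifferentiated \eqref{eq:Q2coord}, and then extracting the vertical part.
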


\begin{figure}[h!tb]
\center
\begin{tikzpicture}
\filldraw[fill=blue!20]
node[anchor=west,left,pos=0]{$N\times \{t=0\}$} (0,0) --  node[anchor=west,left,pos=0.5]{$N\times I$} node[anchor=west,left,pos=1]{$N\times \{t=1\}$} (0,3) -- (1,2.5) -- (1,-0.5) -- (0,0);
\filldraw[fill=blue!20]
(2,0) -- (2,3) -- (1,2.5) -- (1,-0.5) -- (2,0);
\filldraw[fill=blue!10]
(0,3) -- (2,3) -- (1,2.5);
\draw[dashed] (0,0) -- (2,0);
\draw[->] (2,-0.25) to[out=270,in=180] (3,-0.5) node[anchor=east,right]{$A|_{t=0}\in\Omega(N,W)^{MC}$};
\draw[->] (1.5,1.5) to[out=270, in=180] (3,1) node[anchor=east,right]{$i_{\partial_t}(dA-C_Q(A))=0$};
\draw (5,2.5)node{$\Rightarrow A\in\Omega(N\times I,W)^{MC}$};
\end{tikzpicture}
\caption{Homotopy Lemma. }
\label{fig:diagram}
\end{figure}
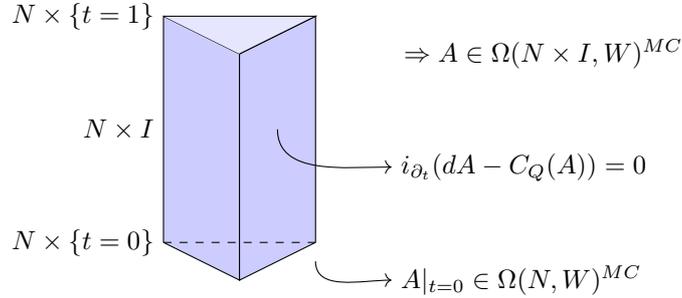

\begin{proof}

Writing $A=A_h+dt\,A_v$, and using the Taylor expansion
$$C_Q(A)=C_Q(A_h+dt\,A_v)=C_Q(A_h)+dt\,A_v^i\frac{\partial C_Q}{\partial \xi^i}(A_h)$$
we get
\begin{equation}\label{eq:decdA}
dA-C_Q(A)=(d_h A_h-C_Q(A_h))+dt\Bigl(\frac{d}{dt}A_h-d_hA_v-A_v^i\frac{\partial C_Q}{\partial \xi^i}(A_h)\Bigr).
\end{equation}
Since $i_{\partial_t}(dA-C_Q(A))=0$, we get from here
\begin{equation}\label{eq:dotAh}
\frac{d}{dt}A_h=d_hA_v+A_v^i\frac{\partial C_Q}{\partial \xi^i}(A_h).
\end{equation}

We can now compute
$$\frac{d}{dt}(d_hA_h-C_Q(A_h))$$
using \eqref{eq:dotAh}. We get
$$\frac{d}{dt}(d_hA_h-C_Q(A_h))=
(-1)^{\deg \xi^i-1}A_v^i\,d_hA_h^k\frac{\partial^2 C_Q}{\partial \xi^k \partial \xi^i}(A_h)
-A^i_v\frac{\partial C^k_Q}{\partial \xi^i}(A_h)\frac{\partial C_Q}{\partial \xi^k}(A_h).$$
Since Equation \eqref{eq:Q2coord} implies
$$0=\frac{\partial}{\partial \xi^i}\Bigl(C^k_Q(A_h)\frac{\partial C_Q}{\partial \xi^k}(A_h)\Bigr)=
\frac{\partial C^k_Q}{\partial \xi^i}(A_h)\frac{\partial C_Q}{\partial \xi^k}(A_h)+(-1)^{\deg \xi^i}C^k_Q(A_h)\frac{\partial^2 C_Q}{\partial \xi^k \partial \xi^i}(A_h),
$$
we get 
$$\frac{d}{dt}(d_hA_h-C_Q(A_h))=(-1)^{\deg \xi^i-1}A_v^i(d_hA_h^k-C^k_Q(A_h))\frac{\partial^2 C_Q}{\partial \xi^k \partial \xi^i}(A_h).$$
This is a linear differential equation for $d_hA_h-C_Q(A_h)$, which together with the initial condition \eqref{eq:Acond1} implies that 
$$d_hA_h-C_Q(A_h)=0$$
and thus, in view of \eqref{eq:Acond2}, also
$$dA-C_Q(A)=0.$$
This completes the proof.
\end{proof}

\begin{thm}\label{thm:htopy2}
Let $N$ be a compact manifold (possibly with corners), let $A_0\in\Omega(N,W|_U)^{MC}$, and let $H\in\Omega(N\times I,W)^{-1}$ be a horizonal form (i.e.\ $i_{\partial_t}H=0$).
Then there is $0<\epsilon\leq1$ and a unique $A\in\Omega(N\times [0,\epsilon],W|_U)^{MC}$ such that $A|_{t=0}=A_0$ and $A_v=H$. It is given by $A=A_h+dt\,H$, where $A_h$ is the solution of the differential equation
\begin{equation}\label{eq:dotAh2}
\frac{d}{dt}A_h=d_hH+H^i\frac{\partial C_Q}{\partial \xi^i}(A_h)
\end{equation}
 with the initial condition $A_h|_{t=0}=A_0$. We can choose $\epsilon=1$ if the $C^0$-norm of $H^{(0)}$ is small enough, where $H^{(0)}:N\times I\to W^{-1}$ is the 0-form part of $H$.
\end{thm}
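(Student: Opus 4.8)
The plan is to reduce the statement to an ordinary differential equation and then to exploit the fact that this equation contains \emph{no} derivatives of the unknown. By the Homotopy Lemma (Proposition \ref{prop:htopy}), a form $A=A_h+dt\,H$ with $A|_{t=0}=A_0\in\Omega(N,W|_U)^{MC}$ automatically lies in $\Omega(N\times[0,\epsilon],W|_U)^{MC}$ as soon as $i_{\partial_t}(dA-F_Q(A))=0$; by the decomposition \eqref{eq:decdA} this condition is exactly \eqref{eq:dotAh} with $A_v=H$, i.e.\ \eqref{eq:dotAh2}. So it suffices to produce a unique $A_h$ solving \eqref{eq:dotAh2} with $A_h|_{t=0}=A_0$, to check that $A_h$ remains in $\Omega(N,W|_U)^0$ (the constraint that its $W^0$-part map into $U$), and to determine how long such a solution persists.

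The key observation is that the right-hand side of \eqref{eq:dotAh2} applies $d_h$ only to the \emph{given} form $H$, never to the unknown $A_h$, while the remaining term $H^i\,\partial F_Q/\partial\xi^i(A_h)$ is algebraic in $A_h$ (a polynomial with $C^\infty(U)$-coefficients composed with $A_h$). Hence \eqref{eq:dotAh2} is, pointwise in $x\in N$, a genuine finite-dimensional ODE on the fibre $\bigoplus_k\wedge^k T_x^*N\otimes W^{-k}$, with smooth inhomogeneous term $d_hH$ and smooth dependence on the parameter $x$ and on the initial datum $A_0$. Standard ODE theory then yields a unique local solution; smooth dependence on parameters and initial conditions guarantees that the $A_h(x,t)$ assemble into a smooth form on $N\times[0,\epsilon]$, and uniqueness is inherited fibrewise.

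To pin down the existence time I would organise \eqref{eq:dotAh2} by form-degree, writing $A_h=\sum_{k\ge0}A_h^{(k)}$ with $A_h^{(k)}\in\Omega^k(N)\otimes W^{-k}$; the sum is finite since $N$ is finite-dimensional. Because $W$ is negatively graded, a factor $A_h^{(k)}$ can enter the $k$-form part of $H^i\,\partial F_Q/\partial\xi^i(A_h)$ only when all its companion factors are $0$-forms, so each layer obeys $\tfrac{d}{dt}A_h^{(k)}=L_k\,A_h^{(k)}+R_k$ with $L_k,R_k$ built algebraically from $H$ and from the lower layers $A_h^{(<k)}$; these equations are affine-linear and so never blow up in finite time. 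The only genuinely nonlinear, and the only decoupled, layer is the bottom one: the $W^0$-valued function $\phi:=A_h^{(0)}$ satisfies a closed pointwise ODE $\dot\phi=\Psi(\phi,H^{(0)})$, since $d_hH$ and the positive-degree components of $A_h$ carry no $0$-form part. Thus the sole obstruction to continuing the solution is $\phi$ leaving $U$.

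Finally, $\Psi$ is linear in $H^{(0)}$, so on a compact neighbourhood $K$ of the image of $\phi|_{t=0}=A_0^{(0)}$ with $K\subset U$ one has $\|\dot\phi\|_{C^0}\le C\,\|H^{(0)}\|_{C^0}$; compactness of $N$ makes the image of $A_0^{(0)}$, hence $K$ and $C$, uniform. For arbitrary $H$ this gives some $\epsilon>0$ before $\phi$ can reach $\partial U$, and when $\|H^{(0)}\|_{C^0}$ is small enough $\phi$ moves by at most $C\|H^{(0)}\|_{C^0}<\on{dist}(K,\partial U)$ over all of $[0,1]$, so one may take $\epsilon=1$. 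The main point to get right — and the only place analysis enters — is precisely this: that \eqref{eq:dotAh2} carries no derivative of the unknown, reducing an a priori infinite-dimensional flow to a fibrewise finite-dimensional ODE whose life-span is governed solely by the decoupled $W^0$-component.
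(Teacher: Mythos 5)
Your proposal is correct and follows essentially the same route as the paper: reduce via Proposition \ref{prop:htopy} and the decomposition \eqref{eq:decdA} to the ODE \eqref{eq:dotAh2}, then solve it degree by degree, with the only nonlinear (and decoupled) layer being the $W^0$-valued equation \eqref{eq:dotAh0}, whose solvability on $[0,1]$ is what the $C^0$-bound on $H^{(0)}$ guarantees. The only difference is that you spell out the fibrewise finite-dimensional ODE viewpoint and the quantitative escape-time estimate, which the paper leaves implicit.
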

\begin{proof}
By Proposition \ref{prop:htopy} and Equation \eqref{eq:decdA} we see that $A\in\Omega(N\times I,W|_U)^{MC}$ iff the condition \eqref{eq:dotAh} holds with $A_v=H$, i.e.\ if the ODE \eqref{eq:dotAh2} holds. 

To see that \eqref{eq:dotAh2} admits a (unique) solution for a small-enough $\epsilon$, notice that \eqref{eq:dotAh2} is a family of finite-dimensional ODEs parametrized by $N$. Namely, for each $x\in N$  it is a ODE for $A_h|_x\in(\bigwedge T^*_x N\otimes W)^1$. If we take the solutions of these ODEs with the initial values $A_0|_x$ on the maximal subintervals $J_x\subset I$, we get the existence and uniqueness of a solution $A$ on the open subset $O\subset N\times I$, $O=\bigcup_{x\in N}\{x\}\times J_x$. As $N\times\{0\}\subset U$, the compactness of $N$  implies the existence of $\epsilon>0$ s.t.\ $N\times[0,\epsilon]\subset O$.

 To see that an estimate of the $C^0$-norm of $H^{(0)}$ implies that \eqref{eq:dotAh2} has a solution for $t\in[0,1]$, let us split $A_h$ to its homogeneous parts
$$A_h=\sum_k A_h^{(k)},\quad A_h^{(k)}\in\Omega^k(N\times I,W^{-k}).$$ 
In degree 0 Equation \eqref{eq:dotAh2} is
\begin{equation}\label{eq:dotAh0}
\frac{d}{dt}A_h^{(0)}=(H^{(0)})^i\frac{\partial C_Q}{\partial \xi^i}(A_h^{(0)})
\end{equation}
with $i$ running only over the indices with $\deg\xi^i=1$. Again, this is a family of ODEs for $A_h|_x\in U\subset W^0$, parametrized by $x\in N$, and the smallness of the $C^0$-norm of $H^{(0)}$ implies that the RHSs are small uniformly in $x$ and thus the solution exists on $N\times I$. Supposing that $A_h^{(m)}$'s are known for $m<k$, Equation \eqref{eq:dotAh2} in degree $k$ is an inhomogeneous linear ODE for $A_h^{(k)}$, and therefore always has a solution. We can thus find $A_h$ by solving \eqref{eq:dotAh2} successively for $A_h^{(0)}$, $A_h^{(1)}$, \dots, $A_h^{(\dim N)}$.
\end{proof}

\begin{rem}
The hypothesis on the $C^0$-norm of $H^{(0)}$ was used to make sure that a solution $A_h^{(0)}$ of Equation \eqref{eq:dotAh0} exists for $t\in I$. If $W^0=0$, no hypothesis is needed, since $A_h^{(0)}=0$. When $W^0\neq0$ then $Q:C^\infty(U)
\to C^\infty(U)\otimes (W^{-1})^*$ can be seen as a linear map from $W^{-1}$ to the space of vector fields on $C^\infty(U)$, and defines an integrable distribution on $U$. If $A_0\in\Omega(N,W|_U)^{MC}$ then the image of $A_0^{(0)}:N\to U$ is contained in a leaf of this distribution. If the leaf if compact, again no hypothesis of the $C^0$-norm of $H^{(0)}$ is needed, since \eqref{eq:dotAh0} is given by vector fields tangent to the leaf. The hypothesis is needed only if the closure of the leaf is non-compact. 
\end{rem}

Theorem \ref{thm:htopy2} can be used to solve the generalized Maurer-Cartan equation $dA=C_Q(A)$ on cubes. We shall describe another method in the following section.

\section{Solving the generalized Maurer-Cartan equation}\label{sec:kuranishi}
Suppose now that $N\subset\R^n$ is a star-shaped $n$-dimensional  submanifold with corners (typically we would take for $N$ a $n$-simplex with a vertex at the origin, or a ball centered at the origin). Let 
$$h:\Omega^\bullet(N)\to\Omega^{\bullet-1}(N)$$
 be the de Rham homotopy operator given by the deformation retraction $$R:N\times I\to N,\ R(x,t)=tx$$
 (i.e.\ $h\alpha=\int_I R^*\alpha$).
Let, as above, $U\subset W^0$ be an open subset and $Q$ a differential on the algebra $\mathcal A_{W|_U}$.

\begin{thm}\label{thm:extiso}
A form $A\in\Omega(N,W|_U)^0$ satisfies
\begin{equation}\label{eq:mcq}
dA=C_Q(A)
\end{equation}
if and only if the form 
$$B=A-h(C_Q(A))\in\Omega(N,W)^0$$
satisfies
\begin{equation}\label{eq:mc0}
dB=0.
\end{equation}
\end{thm}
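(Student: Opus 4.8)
The plan is to prove the two implications separately, treating $B=A-h(F_Q(A))$ componentwise as $B^i=A^i-h(F_Q^i(A))$ and using throughout the de Rham homotopy identity $dh+hd=\mathrm{id}-p_0$ for the radial retraction $R(x,t)=tx$, where $p_0$ is evaluation at the cone point $0$. Since every component $F_Q^i(A)$ is a differential form of degree $\deg\xi^i+1\geq1$, the operator $p_0$ annihilates $F_Q(A)$, so the homotopy identity reads $dh(F_Q^i(A))=F_Q^i(A)-hd(F_Q^i(A))$ on each component. The forward direction is then immediate: if $dA=F_Q(A)$ then $d(F_Q^i(A))=d(dA^i)=0$, hence $hd(F_Q^i(A))=0$ and $dh(F_Q^i(A))=F_Q^i(A)$, giving $dB^i=dA^i-dh(F_Q^i(A))=F_Q^i(A)-F_Q^i(A)=0$.

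For the converse I would introduce the error $E:=dA-F_Q(A)\in\Omega(N,W)^1$, whose components $E^i$ are forms of degree $\deg\xi^i+1\geq1$. Using the homotopy identity as above, $dB^i=E^i+hd(F_Q^i(A))$, so $dB=0$ is equivalent to $E^i=-h\bigl(d(F_Q^i(A))\bigr)$. Expanding by the chain rule, $d(F_Q^i(A))=\sum_k\pm\frac{\partial F_Q^i}{\partial\xi^k}(A)\wedge dA^k=\sum_k\pm\frac{\partial F_Q^i}{\partial\xi^k}(A)\wedge(E^k+F_Q^k(A))$, and the terms containing $F_Q^k(A)$ sum to zero by the consequence \eqref{eq:Q2coord} of $Q^2=0$. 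This leaves the homogeneous linear fixed-point equation
$$E=-h\bigl(DF_Q(A)\wedge E\bigr),$$
where $DF_Q(A)$ denotes the matrix of forms $\pm\frac{\partial F_Q^i}{\partial\xi^k}(A)$. The whole content of the converse is thus to show that this equation forces $E=0$.

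This last step is the main obstacle, and it cannot be settled by a naive filtration by form degree: the operator $E\mapsto-h(DF_Q(A)\wedge E)$ preserves the form degree of each component (the factor $\frac{\partial F_Q^i}{\partial\xi^k}$ raises degree by exactly one, which $h$ lowers again), and it is not nilpotent in general. Instead I would exploit the explicit radial form of the homotopy operator. Writing $\psi_t(x)=tx$ and letting $\mathcal{E}=\sum_j x^j\partial_{x^j}$ be the Euler field, one has $h\omega=\int_0^1 i_{\partial_t}R^*\omega\,dt=\int_0^1\tfrac{dt}{t}\,\psi_t^*\bigl(i_{\mathcal{E}}\,\omega\bigr)$. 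The crucial feature is that $i_{\mathcal{E}}$ contributes a factor equal to a coordinate, i.e.\ of size $\sim|tx|=t|x|$ at the evaluation point $tx$, which cancels the $1/t$ singularity and leaves an extra power of the radial distance. Substituting this into the fixed-point equation and taking pointwise norms yields, on a compact star-shaped $N$ (so that $DF_Q(A)$ is bounded, say by $C$), the Volterra-type estimate $|E(x)|\leq C'\,|x|\int_0^1|E(tx)|\,dt=C'\int_0^{|x|}\phi(\rho)\,d\rho$, where $\phi(r):=\sup_{|x|\leq r}|E(x)|$. Hence $\phi(r)\leq C'\int_0^r\phi(\rho)\,d\rho$ with $\phi$ continuous and $\phi(0)=0$, and Gr\"onwall's inequality gives $\phi\equiv0$, i.e.\ $E=0$ and $dA=F_Q(A)$. (Intuitively the same mechanism says each application of $h$ raises the order of vanishing of $E$ at the origin, so a solution would vanish to infinite order there; Gr\"onwall upgrades this to vanishing on all of $N$.)

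Two routine points remain to be checked but present no difficulty: that $E$ is smooth (it is, since $A$ and $F_Q$ are smooth), so the integral inequality is legitimate, and that the argument is local along rays through the cone point, so it applies componentwise and for a general star-shaped $N$ by exhausting it with compact star-shaped subdomains. I expect the honest extraction of the power of $t$ from the explicit kernel $\tfrac{1}{t}\,\psi_t^*(i_{\mathcal{E}}(\,\cdot\,))$ to be the only genuinely technical part of the write-up; everything else is formal manipulation with the homotopy identity and \eqref{eq:Q2coord}.
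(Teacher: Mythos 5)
Your proof is correct, but it takes a genuinely different route from the paper's. The paper's converse is geometric: from $dB=0$ and the identity $i_E\,dh=i_E$ (which follows from $i_Eh=0$) it deduces that the Maurer--Cartan error is annihilated by $i_E$, then pulls $A$ back along the scaling homotopy $R(x,t)=tx$ to $N\times I$, where the $R$-relatedness of $E$ and $t\partial_t$ turns this into the hypothesis $i_{\partial_t}\bigl(dR^*A-F_Q(R^*A)\bigr)=0$ of the Homotopy Lemma (Proposition \ref{prop:htopy}); that lemma, whose own proof uses \eqref{eq:Q2coord} to derive a homogeneous linear ODE in $t$ for the horizontal error with zero initial condition, then gives $R^*A\in\Omega(N\times I,W|_U)^{MC}$, and setting $t=1$ finishes. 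You instead stay on $N$: you use $dh+hd=\mathrm{id}-p_0$ and the chain rule together with \eqref{eq:Q2coord} to produce the linear Volterra equation $E=-h\bigl(DF_Q(A)\wedge E\bigr)$ for the error $E=dA-F_Q(A)$, and kill $E$ by a Gr\"onwall estimate extracted from the explicit radial kernel of $h$. The two arguments share their engine --- \eqref{eq:Q2coord} linearizes the error equation, and uniqueness for a homogeneous linear ODE/integral equation along rays from the cone point forces the error to vanish --- but your packaging is self-contained (no Homotopy Lemma, no passage to $N\times I$) and makes the analytic mechanism explicit, at the cost of norm estimates; the paper's factoring through Proposition \ref{prop:htopy} avoids all estimates and reuses that lemma elsewhere (Theorem \ref{thm:htopy2}, Section \ref{sec:def-retr}). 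Two small corrections to your write-up: the parenthetical claim that ``the factor $\frac{\partial F_Q^i}{\partial\xi^k}$ raises degree by exactly one'' is inaccurate --- the entry $\frac{\partial F_Q^i}{\partial\xi^k}(A)$ has form degree $\deg\xi^i+1-\deg\xi^k$; what is true (and what you actually use) is that the operator $E\mapsto DF_Q(A)\wedge E$ raises the total $W$-shifted degree by one, so that $h$ brings it back and the operator preserves $\Omega(N,W)^1$. Also, compactness of $N$ is not needed even as a reduction: your inequality at the points $sx$ of a fixed segment $[0,x]\subset N$ reads $|E(sx)|\le C'|x|\int_0^s|E(\rho x)|\,d\rho$ with $C'$ a bound for $DF_Q(A)$ on the compact segment alone, so Gr\"onwall applies ray by ray.
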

\begin{proof}
If $dA=C_Q(A)$ then $dhC_Q(A)=dhdA=dA$, hence $B$ is closed. 

Suppose now that $dB=0$. Let $E$ be the Euler vector field on $N\subset\R^n$. By construction of $h$ we have $i_Eh=0$, hence $i_Edh=i_E$, and thus 
$$i_E (dA-C_Q(A))=i_E(dA-dhC_Q(A))=i_E dB=0.$$
Since the vector field $E$ on $N$ and the vector field $t\partial_t$ on $N\times I$ are $R$-related, we get
$$i_{\partial_t}(d\,R^*A-C_Q(R^*A))=0$$
(using $R^*C_Q(A)=C_Q(R^*A)$).
As $(R^*A)|_{t=0}$ is a constant we have $(R^*A)|_{t=0}\in\Omega(N,W|_U)^{MC}$. Proposition \ref{prop:htopy} now implies that $d\,R^*A-C_Q(R^*A)=0$, and therefore (by setting $t=1$) $dA=C_Q(A)$.
\end{proof}

Let us  show that the map $A\mapsto B= A-h(C_Q(A))$ is injective by describing explicitly its inverse.

\begin{prop}\label{prop:K-inv}
Let $B\in\Omega(N,W)^0$ be such that the 0-form part of $B$ (a map $N\to W^0$) sends $0\in N\subset\R^n$ to an element of $U\subset W^0$. The equation
\begin{equation}\label{eq:a}
\mathcal{L}_E a=C_Q(B+i_E a)
\end{equation}
($\mathcal{L}_E$ is Lie derivative along $E$) has a solution $a\in\Omega(N',W)^1$ on $N'\subset N$ where $N'$ is some star-shaped open neighborhood of $0$. The solution $a$ is unique and we can thus demand $N'$ to be maximal (i.e. containing any other $N'$). 

A form  $A\in\Omega(N,W|_U)^0$ such that 
$$B=A-h(C_Q(A))$$ exists iff $N'_{max}=N$, and it is unique, namely $A=B+i_Ea$.
\end{prop}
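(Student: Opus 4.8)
\emph{Plan of proof.}
The whole statement is driven by the interplay of $h$, the contraction $i_E$ along the Euler field, and $\mathcal{L}_E=d\,i_E+i_E\,d$. Since $h$ is built from the radial (scaling) homotopy it commutes with $\mathcal{L}_E$, and together with $i_Eh=0$ (already used in the proof of Theorem~\ref{thm:extiso}) and the homotopy formula $dh+hd=\mathrm{id}-\pi_0$ (where $\pi_0$ is evaluation at $0\in N$ and satisfies $i_E\pi_0=0$) this yields
$$\mathcal{L}_E\,h=h\,\mathcal{L}_E=i_E,\qquad i_E h=0.$$
I would also introduce the \emph{radial primitive} $P$, the inverse of $\mathcal{L}_E$ on forms of positive form-degree, realized by the convergent integral $P\omega=\int_0^1 S_t^{*}\omega\,\tfrac{dt}{t}$ (with $S_t$ scaling by $t$). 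It preserves the $W$-grading, commutes with $i_E$ and $\mathcal{L}_E$, satisfies $\mathcal{L}_E P=P\,\mathcal{L}_E=\mathrm{id}$ there, and $i_E P=h$ (both sides are the radial integral of $i_E\omega$). The role of these identities is to convert the integral equation $B=A-h(F_Q(A))$ into the differential equation \eqref{eq:a} and back.

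For the existence and uniqueness of $a$, I would apply $P$ to \eqref{eq:a} (whose right-hand side $F_Q(B+i_Ea)$ has positive form-degree) to obtain the equivalent fixed-point equation $a=P\,F_Q(B+i_E a)$. On a small enough star-shaped neighbourhood of $0$ the map $a\mapsto P\,F_Q(B+i_Ea)$ is a contraction — $P$ gains a positive power of the radius while $F_Q$ is smooth with values controlled near the constant $B^{(0)}(0)\in U$ — so it has a unique fixed point, which resolves the (otherwise singular) behaviour at the origin where smoothness forces $a\to 0$. Written in Cartesian components, \eqref{eq:a} is a first-order system along the rays through $0$ with right-hand side algebraic and smooth in $a$; continuing the local solution by the standard ODE flow produces a unique maximal solution on a star-shaped open $N'\ni 0$. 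As in Theorem~\ref{thm:htopy2} the form-degree-$0$ component is the genuinely nonlinear part (cf.\ \eqref{eq:dotAh0}), and $N'$ is cut out by the usual escape phenomena (the $0$-form part of $B+i_Ea$ leaving $U$, or higher components blowing up); in particular $N'$ is exactly where this $0$-form part stays in $U$.

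The equivalence is then short. If $N'=N$, set $A:=B+i_Ea$; from the fixed-point form $a=P\,F_Q(A)$, so $i_Ea=i_E P\,F_Q(A)=h\,F_Q(A)$, whence $A-B=h\,F_Q(A)$, i.e.\ $B=A-h(F_Q(A))$, and $A\in\Omega(N,W|_U)^0$ because $N'=N$ keeps its $0$-form part in $U$. Conversely, if some $A\in\Omega(N,W|_U)^0$ satisfies $B=A-h(F_Q(A))$, put $\tilde a:=P\,F_Q(A)\in\Omega(N,W)^1$, defined on all of $N$; then $\mathcal{L}_E\tilde a=F_Q(A)$ and $i_E\tilde a=h\,F_Q(A)=A-B$, so $\mathcal{L}_E\tilde a=F_Q(B+i_E\tilde a)$, i.e.\ $\tilde a$ solves \eqref{eq:a} on all of $N$. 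By uniqueness of the maximal solution $\tilde a=a$, forcing $N'=N$, and $A=B+i_E a$. Applying this to two solutions $A_1,A_2$ gives $A_1=B+i_Ea=A_2$, proving uniqueness. This gives all the claims: $A$ exists iff $N'=N$, and then $A=B+i_Ea$ is the unique such form.

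The one genuinely delicate point I expect is the behaviour at the origin, where $\mathcal{L}_E$ degenerates: the contraction-mapping step for $a=P\,F_Q(B+i_Ea)$ — equivalently, the regular-singular existence of a unique smooth solution germ of \eqref{eq:a} at $0$ — is the technical heart, playing the same role as the construction of $h$ and the successive solution of Theorem~\ref{thm:htopy2}. Everything downstream (the two implications and uniqueness) is formal, resting only on $\mathcal{L}_E h=i_E$, $i_Eh=0$ and $\mathcal{L}_E P=\mathrm{id}$.
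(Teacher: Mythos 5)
Your formal skeleton coincides with the paper's: the paper likewise inverts $\mathcal{L}_E$ on forms of positive form-degree (your $P$ is exactly its $\mathcal{L}_E^{-1}$, written as a ray integral), uses $h=i_E\mathcal{L}_E^{-1}$, and proves the equivalence and the uniqueness of $A$ by precisely your third paragraph's computation: given $A$ with $B=A-h(F_Q(A))$, set $a=\mathcal{L}_E^{-1}F_Q(A)$, check $A=B+i_Ea$ and that $a$ solves \eqref{eq:a} on all of $N$. Your identities $\mathcal{L}_E h=h\mathcal{L}_E=i_E$, $i_Eh=0$, $i_EP=h$ are all correct. Where you genuinely diverge is the core existence step. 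The paper does not use a contraction: it reformulates \eqref{eq:a} as the problem of finding the unstable manifold of the explicit vector field $\hat E=E_\text{lift}+F_Q(B+i_E\,\cdot)$ on $\hat N=(\bigwedge T^*N\otimes W)^1$, observes that its unique fixed point over $0$ is hyperbolic (vertical stable directions with eigenvalues $-k_J$, horizontal unstable directions with eigenvalue $1$), and gets existence, uniqueness, \emph{and smoothness} of the maximal solution from invariant-manifold theory; this also feeds Robbin's theorem on smooth dependence of unstable manifolds on parameters, which is exactly what Proposition \ref{prop:kappa-emb} needs afterwards.

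The one genuine gap in your route is regularity, and it sits exactly at the point you flag as "the technical heart" but then treat as settled. Your map $a\mapsto P\,F_Q(B+i_Ea)$ is a contraction only in $C^0$: the small factor comes from $i_E$ (not from $P$, which is merely bounded --- note $\Vert P\Vert_{C^k\to C^k}\leq 1$), and this smallness is destroyed by differentiation, since $\partial_l(x^j\omega_{jI})=\omega_{lI}+x^j\partial_l\omega_{jI}$ shows $\Vert i_E\Vert_{C^k\to C^k}=O(1)$ rather than $O(r)$ for $k\geq1$. Moreover $P$ is a ray integral and does not smooth, so the fixed-point equation gives no bootstrap from $C^0$ to $C^1$. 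As written you therefore produce a continuous solution, while the proposition asserts $a\in\Omega(N',W)^1$ (smooth, or $C^r$ in the Banach version). This is repairable --- e.g.\ iterate in $C^k$, where the Lipschitz bound has the triangular form $C_k(r\Vert\cdot\Vert_{C^k}+k\Vert\cdot\Vert_{C^{k-1}})$, giving convergence on a $k$-dependent smaller ball, and then propagate $C^k$ regularity outward along the ray ODEs by smooth dependence on data on a small sphere --- but carrying this out amounts to re-proving the special case of the unstable-manifold theorem that the paper simply invokes. A minor further point: your parenthetical claim that $N'$ is \emph{exactly} where the $0$-form part stays in $U$ overstates the situation; as your own preceding clause admits, $N'$ can also be limited by finite-time blow-up of the higher form-degree components (the ray ODEs are polynomially nonlinear), which in the paper's picture corresponds to the unstable manifold escaping vertically.
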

\begin{proof}
We shall define a vector field $\hat E$ on the total space of 
$$\hat N:=(\textstyle\bigwedge T^*N\otimes W)^1$$
with the following properties:
\begin{enumerate}
\item A (partial) section $a:N'\to\hat N$ ($N'\subset N$) of the bundle $\hat N\to N$ is a solution of \eqref{eq:a} iff the vector field $\hat E$ is tangent to the image of $a$. This simply reflects the fact that \eqref{eq:a} is a quasi-linear PDE.
\item $\hat E$ projects to the Euler vector field $E$ on $N$.
\item $\hat E$ has a unique fixed point $P\in\hat N$ (lying over $0\in N$). The fixed point is hyperbolic: the stable subspace of $T_P\hat N$ is the vertical subspace, and the unstable subspace  projects bijectively onto $T_0N$.
\end{enumerate}

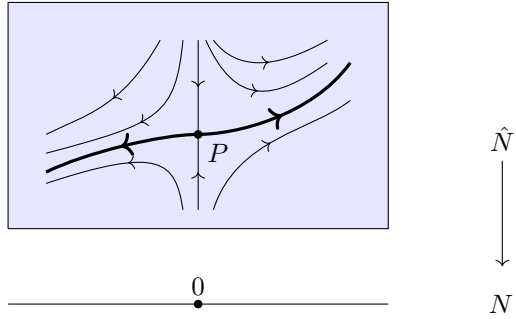
\begin{figure}[h!tb]
\center

\begin{tikzpicture}
{\filldraw[fill=blue!10] (0,0) -- (5,0) --  (5,3) -- (0,3) -- (0,0);
\draw[->] (6.5,0.9)node[anchor=north, above]{$\hat N$} -- (6.5,-0.5);
\filldraw (2.5,-1) circle (.05cm) node[anchor=north,above]{$0$};
\filldraw (2.5,1.25) circle (.05cm);
\draw (0,-1) -- node[anchor=east,right,pos=1.24]{$N$} (5,-1);
\draw[->-=.5] (2.5,2.5) -- (2.5,1.25);
\draw[->-=.5] (2.5,0.25) -- node[anchor=east,right,pos=0.75]{$P$} (2.5,1.25);
\draw[very thick,->-=.5] (2.5,1.25) .. controls (2,1.25)  and (1,1) .. (0.5,0.75);
\draw[very thick,->-=.5] (2.5,1.25) .. controls (3,1.25) and (4,1.5) .. (4.5,2.2);
\draw[->-=.5] (2.3,2.5) .. controls (2.2,1.5) and (2,1.4) .. (0.5,1);
\draw[->-=.5] (2,2.5) .. controls (1.5,1.6) and (1,1.5) .. (0.5,1.25);
\draw[->-=.5] (2.7,2.5) .. controls (3,2.1) and (3.5,2.1) .. (4.2,2.5);
\draw[->-=.5] (2.6,2.5) .. controls (3,1.6) and (3.5,1.7) .. (4.2,2.2);
\draw[->-=.5] (2.3,0.25) .. controls (2.2,0.9) and (2,1.1) .. (0.5,0.6);
\draw[->-=.5] (2.7,0.25) .. controls (3,1.2) and (4,1.3) .. (4.5,1.7);
}
\end{tikzpicture}

\caption{The section $a$ is the unstable manifold of $\hat E$}

\label{fig:hyperbolic}
\end{figure}

Once $\hat E$ is defined, the proposition can be proven as follows. A partial section $a:N'\to \hat N$, where $N'\subset N$ is a star-shaped neighbourhood of $0\in N$, is a solution of \eqref{eq:a} iff the image of $a$ is a local unstable manifold of $\hat E$. Since $\hat E$ projects onto $E$, the (full) unstable manifold of $\hat E$ is the image of some section $a:N'_{max}\to \hat N$, and this section is the unique maximal solution of \eqref{eq:a} we wanted to find. 

The existence and uniqueness of $A$ can then be proven as follows. If $N'_{max}=N$, i.e.\ if $a$ is defined on the entire $N$, then we can set $A=B+i_Ea$ and it satisfies $B=A-h(C_Q(A))$, as required. To get uniqueness, notice that the operator $\mathcal{L}_E$ is invertible on $\Omega^{>0}(N)$  and $h=i_E \mathcal{L}_E^{-1}$.\footnote{The inverse of $\mathcal L_E$ be written explicitly as an follows: if $\alpha=\sum_I f_Idx^I\in\Omega^k(N)$ ($I$ is a multiindex) then $\mathcal L_E^{-1}\alpha = \sum_I g_Idx^I$ with $g_I(x)=\int_0^1 t^{k-1}f_I(tx)\,dt$. The identity $h=i_E \mathcal{L}_E^{-1}$ follows from $h i_E =0$ since $h L_E= h d i_E + h i_E d = h d i_E = (h d + d h) i_E = i_E$.} 
If $A$ satisfying $B=A-h(C_Q(A))$ exists, let us set $a=\mathcal{L}_E^{-1}C_Q(A)$. As 
$$A=B+h \mathcal{L}_E a=B+i_E a,$$
 $a$ is a solution of \eqref{eq:a} with $N'=N$. The uniqueness of $A$ then follows from the uniqueness of $a$.

The vector field $\hat E$ on $\hat N$ is constructed as follows. Consider the natural lift $E_\text{lift}$ of $E$ to $\hat N$. In coordinates it is of the form
$$E_\text{lift}=x^i\frac{\partial}{\partial x^i}-\sum_{(J,a)} k_{(J,a)} y^{(J,a)} \frac{\partial}{\partial y^{(J,a)}}$$
where $x^i$'s are the coordinates on $N$ and $y^{(J,a)}$ are the additional coordinates on $\hat N$, dual to $dx^{j_1}\ldots dx^{j_l}\otimes e_a$ (where $e^a$ is a basis of $W$ and $\deg e_a=1-l$). Notice that $k_{(J,a)}=1-\deg e_a >0$ and so the unique fixed point $0\in\hat N$ of $E_\text{lift}$ is hyperbolic.

Let us define the vector field $\hat E$ on  $\hat N$ by
$$\hat E=E_\text{lift}+C_Q(B+i_E\,\cdot),$$
where  $$C_Q(B+i_E\,\cdot):({\textstyle\bigwedge}\, T^*N\otimes W)^1\to ({\textstyle\bigwedge}\, T^*N\otimes W)^1$$ is understood as a vertical vector field. 
$\hat E$ has the needed properties. Adding the vertical vector field $C_Q(B+i_E\,\cdot)$ to $E_\text{lift}$ moves the fixed point to a point $P\in\hat N$ lying over $0\in N$ and it is the unique fixed point of $\hat E$ (using the fact that the added vertical part is constant over $0\in N$, as $E=0$ at $0\in N$).  To see the hyperbolicity of the fixed point $P$ notice that the stable subspace of $T_P\hat N$ is the vertical subspace, with eigenvalues $-k_{(J,a)}$, and the unstable subspace  projects bijectively onto $T_0N$, with all eigenvalues equal to  $1$.
\end{proof}

Let us suppose that $N$ is compact.  For $r\geq1$ let $\Omega_{r}(N)$ denote the Banach space of $C^r$-forms. The previous two results (Theorem \ref{thm:extiso} and Proposition \ref{prop:K-inv}) remain valid when $A$, $B$ and $a$ are $C^r$-forms.

\begin{prop}\label{prop:kappa-emb}
The map
$$\kappa:\Omega_{r}(N,W|_U)^0\to\Omega_{r}(N,W)^0$$
\begin{equation}\label{eq:kappadef}
\kappa(A)=A-h(C_Q(A))
\end{equation}
is a smooth open embedding of Banach manifolds.
\end{prop}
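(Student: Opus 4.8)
The plan is to show that $\kappa$ is smooth, that its derivative is everywhere an isomorphism, and that it is injective with open image; the Banach manifold version of the inverse function theorem (or rather the observation that a smooth injective local diffeomorphism is an open embedding) then finishes the argument. Injectivity is already essentially in hand: Theorem~\ref{thm:extiso} tells us that $A$ lies in $\Omega_r(N,W|_U)^{MC}$ precisely when $\kappa(A)=A-h(F_Q(A))$ is closed, and Proposition~\ref{prop:K-inv} produces, for each admissible $B$, at most one $A$ with $B=\kappa(A)$ via the unstable-manifold construction. So the first step I would record is that $\kappa$ is injective on $\Omega_r(N,W|_U)^0$, with a left inverse defined on the (open) set of those $B$ whose maximal star-shaped domain $N'$ equals $N$.

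Next I would check smoothness of $\kappa$. The only nonlinear ingredient is $A\mapsto F_Q(A)$, the substitution of the components $A^k$ into the fixed smooth functions $F^i_Q\in C^\infty(U)\otimes S((W^{<0})^*)$. Since each $F^i_Q$ is a smooth function of the degree-$0$ variables times a polynomial in the negatively-graded generators, composition with $F_Q$ is a smooth (in fact real-analytic in the polynomial directions) map of Banach spaces of $C^r$-forms, using that $C^r(N)$ is a Banach algebra and that the degree-$0$ part of $A$ takes values in the open set $U$; postcomposing with the bounded linear operator $h$ preserves smoothness. Its derivative at $A$ in the direction $a$ is
$$
D\kappa_A(a)=a-h\!\left(a^i\frac{\partial F_Q}{\partial\xi^i}(A)\right).
$$

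The main obstacle is to prove that $D\kappa_A$ is a Banach-space isomorphism for every $A$. I would argue this via the same Euler-field/unstable-manifold mechanism as in Proposition~\ref{prop:K-inv}, now linearized. Given $b\in\Omega_r(N,W)^0$ we must solve $a-h(a^i\,\partial_{\xi^i}F_Q(A))=b$ uniquely for $a$. Applying $\mathcal L_E$ and using $h=i_E\mathcal L_E^{-1}$ converts this into a first-order linear equation $\mathcal L_E a = \mathcal L_E b + (\text{terms linear in }a)$ governed by the linearization of $\hat E$ at the section $a_{\max}(A)$; because that fixed point is hyperbolic with the vertical (stable) eigenvalues $-k_J<0$ strictly separated from the horizontal (unstable) eigenvalues equal to $1$, the linearized flow has a unique bounded (unstable-manifold) solution, giving a bounded two-sided inverse of $D\kappa_A$. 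The delicate point here is purely functional-analytic: one must verify that this unique solution, obtained by integrating a linear ODE along the Euler rescaling, depends boundedly on $b$ in the $C^r$-norm and that no loss of derivatives occurs, which follows because $h$ and $\mathcal L_E^{-1}$ are bounded operators on $\Omega_r(N)$ and the coefficients $\partial_{\xi^i}F_Q(A)$ are $C^r$.

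Finally, with $\kappa$ smooth, injective, and a local diffeomorphism at every point (by the inverse function theorem applied to the isomorphism $D\kappa_A$), it is an open map, hence a homeomorphism onto its open image with smooth inverse; that is precisely the assertion that $\kappa$ is a smooth open embedding. I expect the isomorphism of $D\kappa_A$ to be the step demanding real care, whereas smoothness and injectivity are comparatively routine given the two preceding results.
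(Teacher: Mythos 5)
Your proposal is correct, and the ingredients (smoothness, injectivity via Proposition \ref{prop:K-inv}, the Euler-field hyperbolicity) are the right ones, but at the decisive step you take a genuinely different route from the paper. The paper never computes $D\kappa_A$ and never invokes the inverse function theorem: it takes the globally defined inverse already produced by Proposition \ref{prop:K-inv} (which it also uses to get openness of the image directly) and obtains the regularity of that inverse in one stroke from Robbin's theorem on differentiable dependence of unstable manifolds on parameters \cite[Theorem 4.1]{Robbin}, the parameter being $B$. You instead linearize: you show $D\kappa_A$ is a Banach-space isomorphism by running the same unstable-manifold mechanism for the associated affine vector field, and conclude by injectivity plus the inverse function theorem, with openness of the image now a consequence of the IFT rather than of Proposition \ref{prop:K-inv}. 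Your route buys self-containedness: for an affine vector field the invariant section is automatically global (linear ODEs cannot blow up in finite time), so only the basic $C^r$ unstable-manifold theorem is needed; boundedness of $(D\kappa_A)^{-1}$ comes for free from the open mapping theorem; and the IFT produces a $C^\infty$ local inverse outright, whereas the paper's citation gives a priori only a $C^r$ inverse, which strictly speaking still needs the small bootstrap (a $C^1$ inverse forces $D\kappa_A$ to be invertible, whence the inverse is smooth by IFT) that the paper leaves implicit. The paper's route buys brevity, delegating precisely the step you identify as delicate to a citable theorem. Two minor cautions about your write-up: Theorem \ref{thm:extiso} plays no role in injectivity, since Proposition \ref{prop:K-inv} applies to every $B$, closed or not; and applying $\mathcal{L}_E$ to the equation $a-h\bigl(a^i\partial_{\xi^i}F_Q(A)\bigr)=b$ is not by itself an equivalence, because $\mathcal{L}_E$ kills degree-$0$ constants --- the lost information is exactly the condition $a(0)=b(0)$ at the origin, which in the unstable-manifold formulation is encoded in the location of the fixed point over $0\in N$, so your argument survives but this should be made explicit.
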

\begin{proof}
The map $\kappa$ is smooth. By Proposition \ref{prop:K-inv} it is injective and its image is open. The inverse map (defined on the image of $\kappa$) was constructed in the proof of Proposition \ref{prop:K-inv} via the unstable manifold of the vector field $\hat E$, which depends on $B$. Since this dependence on $B$ is smooth, by differentiable dependence of unstable manifold on parameters (Robbin \cite[Theorem 4.1]{Robbin}) we get that $\kappa^{-1}$ is smooth. This implies that $\kappa$ is a smooth open embedding.
\end{proof}

A map similar to (\ref{eq:kappadef}) appears in the work of Kuranishi \cite{Kuranishi} and we will refer to it as the Kuranishi map.

\begin{thm}\label{thm:submf-loc}
The subset
$$\{A\in\Omega_r(N,W|_U)^0;\,dA-C_Q(A)=0\}=:\Omega_r(N,W|_U)^{MC}\subset \Omega_r(N,W|_U)^0,$$
is a (smooth) Banach submanifold and it is closed in the $C^0$-topology (i.e.\ in the $\sup$-norm). The subset
$$\Omega(N,W|_U)^{MC}\subset \Omega(N,W|_U)^0$$
is a Fr\'echet submanifold closed in the $C^0$-topology.
\end{thm}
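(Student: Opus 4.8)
The plan is to read off the submanifold structure directly from the Kuranishi map. By Theorem \ref{thm:extiso} a form $A\in\Omega_r(N,W|_U)^0$ lies in $\Omega_r(N,W|_U)^{MC}$ if and only if $\kappa(A)=A-h(F_Q(A))$ is closed. Writing
$$Z=\{B\in\Omega_r(N,W)^0;\ dB=0\}$$
for the linear subspace of closed forms and $\mathcal O=\kappa\bigl(\Omega_r(N,W|_U)^0\bigr)$ for the (open) image, this says exactly that $\kappa$ restricts to a bijection $\Omega_r(N,W|_U)^{MC}\xrightarrow{\ \sim\ }\mathcal O\cap Z$. Since $\kappa$ is a smooth open embedding (Proposition \ref{prop:kappa-emb}), it is a chart of the ambient Banach manifold in which the Maurer--Cartan locus is the intersection of an open set with the subspace $Z$. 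Hence, once $Z$ is shown to be a split (closed and complemented) subspace, $\kappa$ is a submanifold chart and $\Omega_r(N,W|_U)^{MC}$ is a smooth Banach submanifold modelled on $Z$; the Fr\'echet statement follows by running the same argument with the $C^\infty$ spaces, $\kappa$ being an open embedding there as well.

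I expect the splitting of $Z$ to be the only genuine point. Let $p_0$ be pullback by the constant map $N\to\{0\}$ (killing positive-degree forms and sending a function to its value at $0$); the straight-line homotopy defining $h$ gives $dh+hd=\mathrm{id}-p_0$, so $\Pi:=dh+p_0$ satisfies $d\Pi=0$ and $\Pi|_Z=\mathrm{id}$, and is therefore an algebraic projection onto $Z$. In the Fr\'echet ($C^\infty$) setting $d$, $h$ and $p_0$ are all continuous, so $\Pi$ is a continuous projection and $Z$ is complemented, which finishes that case. In the Banach $C^r$ setting $d$ loses a derivative, so $\Pi$ is only bounded as a map $\Omega_r\to\Omega_{r-1}$ and does not obviously split $Z$ inside $\Omega_r$: \emph{this is the main obstacle}. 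I would resolve it degreewise along $\Omega_r(N,W)^0=\bigoplus_k\Omega_r^k(N)\otimes W^{-k}$: the top form-degree component of $Z$ is the whole space and the degree-$0$ component is the finite-dimensional space of constants, both trivially complemented, while for the intermediate degrees I would produce a bounded projection onto closed forms either by an elliptic/Hodge construction on a suitable scale of spaces or by an explicit complement adapted to the star-shaped geometry. Alternatively one avoids complementation altogether by invoking the smooth inverse of $\kappa$ from Proposition \ref{prop:K-inv}: its restriction to $\mathcal O\cap Z$ is a smooth embedding of an open subset of the Banach space $Z$ onto $\Omega_r(N,W|_U)^{MC}$, already exhibiting the latter as a smooth Banach manifold modelled on $Z$.

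Finally, I would prove closedness in the $C^0$-topology by a weak-limit argument carried through $\kappa$. Given $A_j\in\Omega_r(N,W|_U)^{MC}$ with $A_j\to A$ in $C^0$ and $A\in\Omega_r(N,W|_U)^0$, the pointwise map $F_Q$ is smooth (polynomial in the positive-degree components), hence $C^0$-continuous, and the integral operator $h$ is $C^0$-continuous, so $B_j:=\kappa(A_j)\to\kappa(A)=:B$ in $C^0$. Each $B_j$ is closed, and closedness is tested against smooth forms supported in the interior via $\int_N B_j\wedge d\varphi=0$; this relation passes to the $C^0$-limit, giving $\int_N B\wedge d\varphi=0$, hence $dB=0$ classically, as $B$ is $C^r$ with $r\ge1$. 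By Theorem \ref{thm:extiso} we get $A\in\Omega_r(N,W|_U)^{MC}$, so the locus is $C^0$-closed; the identical argument applies to the Fr\'echet locus.
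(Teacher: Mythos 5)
Your proposal is correct, and its core is the paper's own argument: by Theorem~\ref{thm:extiso}, $\Omega_r(N,W|_U)^{MC}=\kappa^{-1}\bigl(\Omega_r(N,W)^{0,cl}\bigr)$, and since $\kappa$ is a smooth open embedding (Proposition~\ref{prop:kappa-emb}) that is moreover $C^0$-continuous, while the closed forms constitute a $C^0$-closed linear subspace, both the submanifold property and the $C^0$-closedness follow. The one place you diverge is in treating complementation of $Z=\Omega_r(N,W)^{0,cl}$ as ``the main obstacle'': the paper never raises it. Its (implicit) notion of Banach submanifold only asks that the subset be, in some chart of the ambient manifold, an open piece of a closed linear subspace --- and $\kappa$ is exactly such a global chart; equivalently, your ``alternative'' via the smooth inverse from Proposition~\ref{prop:K-inv}, which parametrizes the MC locus by the open subset $\mathcal{O}\cap Z$ of the Banach space $Z$, is precisely what the paper does. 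That fallback is also the right call on the merits: your first proposed fix (Hodge-theoretic or elliptic constructions to split the intermediate degrees) would be delicate, since elliptic regularity is unavailable in the integer $C^r$ classes, and no splitting is actually needed for the statement as formulated. A final minor difference: for the Fr\'echet case the paper does not re-run the argument in the $C^\infty$ category but deduces it from the validity of the statement for every $r\geq1$; your direct route works too, provided one checks that $\kappa$ is an open embedding of Fr\'echet manifolds. Your distributional-limit verification of $C^0$-closedness is exactly the content behind the paper's one-line assertion that $\Omega_r(N,W)^{0,cl}$ is $C^0$-closed and causes no difficulty.
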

\begin{proof}
By Theorem \ref{thm:extiso} we have 
$$\Omega_r(N,W|_U)^{MC}=\kappa^{-1}\bigl(\Omega_r(N,W)^{0,cl}\bigr).$$
Since $\kappa$ is an open embedding, the theorem follows from the $C^0$-continuity of $\kappa$ and from the fact that the space of closed forms 
$$\Omega_r(N,W)^{0,cl}\subset \Omega_r(N,W)^0$$
is a $C^0$-closed subspace (indeed $\alpha\in\Omega_r(N,W)^0$ is in the subspace $\Omega_r(N,W)^{0,cl}$ iff $\int_N\alpha\wedge d\beta=0$ for every  smooth form $\beta$  with support in the interior of $N$, which shows that $\Omega_r(N,W)^{0,cl}$ is $C^0$-closed). Since the result is true for every $r\geq1$, it also holds for $C^\infty$-forms.
\end{proof}

The rest of this section is a preparation for the gauge-fixing procedure of Section \ref{sec:gauge}.

It is somewhat inconvenient that $\Omega_r(N)$ is not a complex (i.e.\ that $d$ is not an everywhere-defined operator $\Omega_r(N)\to\Omega_r(N)$). Following A.~Henriques \cite{Henriques} let us consider the complex
$$\Omega_{r+}(N):=\{\alpha\in\Omega_r(N);d\alpha\in\Omega_r(N)\}$$
which is a Banach space with the norm 
$$\Vert\alpha\Vert_{r+}:=\Vert\alpha\Vert_{C^r}+\Vert d\alpha\Vert_{C^r}.$$
We can identify $\Omega_{r+}(N)$ with the closed subspace
$$\Gamma:=\{(\alpha,\beta);d\alpha=\beta\}\subset\Omega_r(N)\oplus\Omega_r(N),$$
i.e.\ with the graph of the unbounded operator $d:\Omega_r(N)\to\Omega_r(N)$. The isomorphism $\Gamma\to\Omega_{r+}(N)$ is given by the projection $(\alpha,\beta)\mapsto\alpha$.  

We have 
$$\Omega_r(N,W|_U)^{MC}\subset\Omega_{r+}(N,W)^0$$
 as for $A\in\Omega_r(N,W|_U)^{MC}$ we have $dA=C_Q(A)\in\Omega_{r}(N,W)$.
 
\begin{prop}
$\Omega_r(N,W|_U)^{MC}\subset\Omega_{r+}(N,W)^0$ is a Banach submanifold.
\end{prop}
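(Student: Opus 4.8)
The plan is to transport the Kuranishi description of the Maurer--Cartan locus from $\Omega_r$ to the finer Banach space $\Omega_{r+}$. By Theorem~\ref{thm:extiso}, a form $A$ lies in $\Omega_r(N,W|_U)^{MC}$ if and only if $\kappa(A)$ is closed, and a closed form automatically belongs to $\Omega_{r+}$ (its differential being $0$). Hence it is enough to establish two facts: \emph{(i)} that $\kappa$ restricts to a smooth open embedding $\Omega_{r+}(N,W|_U)^0\to\Omega_{r+}(N,W)^0$, and \emph{(ii)} that the closed forms form a complemented closed subspace of $\Omega_{r+}(N,W)^0$. Granting both, $\Omega_r(N,W|_U)^{MC}=\kappa^{-1}\bigl(\Omega_{r+}(N,W)^{0,\mathrm{cl}}\bigr)$ is the preimage of a split subspace under an open embedding, and is therefore a Banach submanifold.

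For \emph{(ii)} I would write down an explicit bounded projection. On the complex $\Omega_{r+}$ the differential $d$ is bounded (this is the role of the $+$--norm), and $h$ preserves $\Omega_{r+}$ because $d(h\alpha)=\alpha-h\,d\alpha\in\Omega_r$ for $\alpha\in\Omega_{r+}$ (in positive degree; in degree $0$ one has $h\alpha=0$). Consider
\[
P_{\mathrm{cl}}:=\mathrm{id}-h\,d\colon\ \Omega_{r+}(N,W)^0\longrightarrow\Omega_{r+}(N,W)^0 .
\]
Using $d^2=0$ and the homotopy identity $d\,h+h\,d=\mathrm{id}$ on positive--degree forms one checks that $(h\,d)^2=h\,d$, so $P_{\mathrm{cl}}$ is a bounded idempotent, and $P_{\mathrm{cl}}B=B$ holds exactly when $dB=0$. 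Its image is thus $\Omega_{r+}(N,W)^{0,\mathrm{cl}}$, which is therefore complemented.

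For \emph{(i)}, the map $\kappa$ is well defined and smooth on $\Omega_{r+}$: if $A\in\Omega_{r+}(N,W|_U)^0$ then $F_Q(A)\in\Omega_{r+}$, since $d\,F_Q(A)=\sum_i(\partial F_Q/\partial\xi^i)(A)\,dA^i\in\Omega_r$ by the chain rule; $h$ preserves $\Omega_{r+}$; and $A\mapsto F_Q(A)$ is a smooth superposition operator on the Banach algebra $\Omega_{r+}$. Injectivity is inherited from Proposition~\ref{prop:K-inv}. To obtain the local diffeomorphism property I would invoke the inverse function theorem, for which the crux is that the derivative $(D\kappa)_A=\mathrm{id}-h\,(DF_Q)_A$, with $(DF_Q)_A\,\dot A=\sum_i\dot A^i(\partial F_Q/\partial\xi^i)(A)$, is a Banach--space isomorphism of $\Omega_{r+}(N,W)^0$; it is injective already on the larger space $\Omega_r$ by Proposition~\ref{prop:kappa-emb}, so by the open mapping theorem it suffices to prove it is surjective onto $\Omega_{r+}$.

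The main obstacle is exactly this regularity statement: the $\Omega_r$--inverse of $\kappa$ (and of $(D\kappa)_A$) must be shown not to leave $\Omega_{r+}$, i.e.\ $\kappa(A)\in\Omega_{r+}$ must force $A\in\Omega_{r+}$. I would prove this by a bootstrap. Differentiating $A=\kappa(A)+h\,F_Q(A)$ and using $d\,h\,F_Q(A)=F_Q(A)-h\,d\,F_Q(A)$ exhibits $\omega:=dA$ (a priori only of class $C^{r-1}$) as a solution of a linear equation whose left-hand side is a Kuranishi--type operator $\mathrm{id}\pm h\,(DF_Q)_A$ and whose right-hand side lies in $\Omega_r$. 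Such an equation can be solved degree by degree exactly as in the proof of Theorem~\ref{thm:htopy2} --- the degree-$0$ part is an ODE and the higher--degree parts are inhomogeneous linear equations fed by the lower ones --- and each step stays of class $C^r$; since the solution is unique already in $C^{r-1}$, the $C^r$ solution coincides with $dA$, giving $A\in\Omega_{r+}$. Running the same argument on the linear equation $(D\kappa)_A\dot A=\eta$ yields the surjectivity needed above, and continuity of the inverse follows from the open mapping theorem. The inverse function theorem together with the global injectivity of $\kappa$ then gives the open embedding in \emph{(i)}, completing the proof. I note that one cannot instead argue that $A\mapsto dA-F_Q(A)$ is a submersion onto $\Omega_r(N,W)^1$: the linearized Bianchi identity forced by $Q^2=0$ (Equation~\eqref{eq:Q2coord}) constrains its image, and it is precisely the use of $\kappa$ that circumvents this difficulty.
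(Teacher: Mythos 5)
Your step \emph{(ii)} is fine: $\mathrm{id}-h\,d$ is indeed a bounded idempotent on $\Omega_{r+}(N,W)^0$ whose image is the closed forms. The genuine gap is in step \emph{(i)}, exactly at the place you call ``the main obstacle''. Your bootstrap reduces everything to solving linear equations $(\mathrm{id}\pm h\,(DF_Q)_A)\,\omega=\eta$ at an \emph{arbitrary} point $A$, and you claim these ``can be solved degree by degree exactly as in the proof of Theorem~\ref{thm:htopy2}''. That mechanism does not exist here. In Theorem~\ref{thm:htopy2} the degree-by-degree structure comes from an evolution equation in the interval variable $t$; your linear equation has no time variable at all. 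The alternative source of triangularity --- the filtration $\mathcal F^i=\bigoplus_{k\leq i}\Omega^k\otimes W^{-k}$ used in Proposition~\ref{prop:locdiff} --- works only when the linearization is taken at a \emph{constant} $A_c$: there the only surviving blocks of $(DF_Q)_{A_c}$ are those with $\deg\xi^j=\deg\xi^i+1$, so $h\,(DF_Q)_{A_c}$ strictly lowers the filtration and is nilpotent. At a general $A$ the operator $h\,(DF_Q)_A$ has filtration-preserving blocks --- already for a Lie algebra it is $\omega\mapsto h(c^i_{jk}A^j\omega^k)$, multiplication by a $1$-form followed by $h$ --- so the weight-$w$ component of the equation involves the weight-$w$ (and even weight-$(w+1)$) unknowns, the system does not triangularize, and no power of $h\,(DF_Q)_A$ need vanish. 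Consequently both ingredients your argument needs --- existence of a $C^r$ solution and uniqueness of the $C^{r-1}$ solution --- are unproved. This is not a removable technicality: the paper itself, whenever it needs invertibility of $\kappa$ or of its derivative at non-constant points, invokes the hyperbolic-fixed-point/unstable-manifold theorem of Robbin (Propositions~\ref{prop:K-inv} and~\ref{prop:kappa-emb}); rescuing your step \emph{(i)} would require an analytic input of comparable strength (for instance a Volterra-type estimate showing the radial operator $h\,(DF_Q)_A$ has spectral radius zero), and none is given.

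The paper's own proof sidesteps the need for any $\Omega_{r+}$-version of $\kappa$ altogether, and it is worth seeing why. Identify $\Omega_{r+}(N,W)$ with the graph $\Gamma\otimes W=\{(\alpha,\beta):\,d\alpha=\beta\}\subset\Omega_r(N,W)\oplus\Omega_r(N,W)$. On the Maurer--Cartan locus one has $dA=F_Q(A)$, so under this identification the inclusion $\Omega_r(N,W|_U)^{MC}\hookrightarrow\Omega_{r+}(N,W)^0$ is the restriction of the map $e(A)=(A,F_Q(A))$, which is smooth on all of $\Omega_r(N,W)^0$; in other words, on the MC locus the unbounded operator $d$ coincides with the smooth map $F_Q$, so the $\Omega_r$- and $\Omega_{r+}$-structures agree there, and the submanifold chart already provided by Theorem~\ref{thm:submf-loc} transfers verbatim. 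Your closing observation (that $A\mapsto dA-F_Q(A)$ cannot be a submersion because of the Bianchi identity \eqref{eq:Q2coord}) is correct, but the lesson the paper draws from the same difficulty is this graph trick, not a strengthened Kuranishi embedding; had your step \emph{(i)} been established it would prove something strictly stronger than the proposition (a Kuranishi chart in the finer topology), which is a hint that it cannot come for free.
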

\begin{proof}
Let us consider the embedding
$$e:\Omega_r(N,W)^0\to\Omega_r(N,W)^0\oplus\Omega_r(N,W)^1,\quad e(A)=(A,C_Q(A)).$$
For $A\in\Omega_r(N,W|_U)^{MC}$ we have $e(A)=(A,dA)$, hence $e$ embeds $\Omega_r(N,W|_U)^{MC}$ to $\Gamma\otimes W$. The isomorphism $\Gamma\otimes W\cong\Omega_{r+}(N,W)$ then implies that $\Omega_r(N,W|_U)^{MC}\subset\Omega_{r+}(N,W)^0$ is a submanifold.
\end{proof}

If $A_c\in\Omega(N,W|_U)^0$ is a constant (i.e.\ if the 0-form component of $A_c$ is a constant map $N\to U$ and the higher-form components of $A_c$ vanish) then $A_c\in\Omega(N,W|_U)^{MC}$ and $\kappa(A_c)=A_c$. We shall identify constant $A_c$'s with elements of $U$, i.e.\ we have an inclusion $U\subset\Omega(N,W|_U)^{MC}$ and $\kappa|_U=\on{id}_U$.

Let us notice that the map
$$A\mapsto A-hdA$$
is a projection
$$\Omega_{r+}(N,W)^0\to\Omega_{r+}(N,W)^{0,cl}=\Omega_{r}(N,W)^{0,cl}$$
which coincides with $\kappa$ on $\Omega_r(N,W|_U)^{MC}$. Let us now consider more general projections (equivalently, let us replace $h$ with another homotopy operator).

\begin{prop}\label{prop:locdiff}
Let $C\subset \Omega_{r+}(N)$ be a graded Banach subspace  such that
$\Omega_{r+}(N)=\Omega_r(N)^{cl} \oplus C$. Then there is an neighbourhood $U\subset\mathcal U\subset\Omega_r(N,W|_U)^{MC}$ such that the projection w.r.t.\ $(C\otimes W)^0$
$$\pi:\Omega_{r+}(N,W)^0\to \Omega_r(N,W)^{0,cl}$$
restricts to an open embedding $\mathcal U\to\Omega_r(N,W)^{0,cl}$ and to the indentity on $U$.
\end{prop}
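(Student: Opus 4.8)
Write $\mathcal M:=\Omega_r(N,W|_U)^{MC}$ for the Maurer--Cartan manifold. The plan is to apply the inverse function theorem to the restriction of the bounded linear projection $\pi$ to the Banach submanifold $\mathcal M$. Every constant $A_c\in U$ is a closed Maurer--Cartan element with $F_Q(A_c)=0$, so $\pi$ fixes it; it therefore suffices to show that at each $A_c\in U$ the differential
$$D(\pi|_{\mathcal M})_{A_c}\colon T_{A_c}\mathcal M\to \Omega_r(N,W)^{0,cl}$$
is a bounded isomorphism, and then to patch the resulting local diffeomorphisms. Since $\pi$ is linear with kernel $(C\otimes W)^0$, this differential is just $\pi|_{T_{A_c}\mathcal M}$, and it is an isomorphism exactly when
$$\Omega_{r+}(N,W)^0=T_{A_c}\mathcal M\oplus (C\otimes W)^0.\qquad(\ast)$$
Thus the whole statement reduces to the transversality $(\ast)$.

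For the second step I would make both summands in $(\ast)$ explicit. Linearizing $dA=F_Q(A)$ at the constant solution $A_c$ gives $T_{A_c}\mathcal M=\ker(d-L)$, where $L:=DF_Q(A_c)$ sends $\dot A$ to $\dot A^i\,\partial F_Q/\partial\xi^i(A_c)$. The crucial simplification is that, $A_c$ being constant, each $\partial F_Q/\partial\xi^i(A_c)$ is a constant element of $W$; hence $L$ acts only on the $W$-factor, preserving the form degree and raising the $W$-degree by one. Differentiating \eqref{eq:Q2coord} at $A_c$ and using $F_Q(A_c)=0$ gives $L^2=0$, exactly as in the proof of Proposition \ref{prop:htopy}. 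Consequently $(\Omega(N,W),d,L)$ is the total complex of the double complex $(\Omega(N),d)\otimes(W,\ell)$, with $\ell$ the linear part of $Q$ at $A_c$, and $T_{A_c}\mathcal M$ is its space of cocycles in total degree $0$ for $\mathcal D:=d-L$.

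The last step, which I expect to be the main obstacle, is to establish $(\ast)$ by downward induction on the form degree; this is where the contractibility of $N$ enters. Writing a total-degree-$0$ element as $\sum_p x^{(p)}$ with $x^{(p)}\in\Omega^p(N)\otimes W^{-p}$ and using the hypothesis $\Omega_{r+}^p(N)=\Omega_r^p(N)^{cl}\oplus C^p$, the equation $\mathcal D y=0$ unwinds into the triangular system $du^{(p)}=\pm\,\ell\,(z^{(p+1)}+u^{(p+1)})$ for the $C$-parts $u^{(p)}\in C^p\otimes W^{-p}$, with the closed parts $z^{(p)}$ prescribed. Because $N$ is star-shaped, $d$ restricts to a Banach isomorphism $C^p\otimes W^{-p}\xrightarrow{\ \sim\ }\Omega_r^{p+1}(N)^{cl}\otimes W^{-p}$ (injectivity from $C^p\cap\ker d=0$, surjectivity from $H^{p+1}_{dR}(N)=0$, and a bounded inverse by the open mapping theorem), and one checks using $\ell^2=0$ and $d\ell=\pm\ell d$ that the right-hand side is always closed; so each $u^{(p)}$ is uniquely determined, starting from $u^{(P)}\in C^P\cap\ker d=0$ in top degree. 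Uniqueness of the solution gives $T_{A_c}\mathcal M\cap(C\otimes W)^0=0$, while solvability for arbitrary prescribed closed parts gives $T_{A_c}\mathcal M+(C\otimes W)^0=\Omega_{r+}(N,W)^0$, which is precisely $(\ast)$. The inverse function theorem then yields a local diffeomorphism at each $A_c\in U$; since $\pi$ restricts to the identity on $U$, these glue, by the standard argument for a local diffeomorphism that is an embedding along $U$, to an open embedding on a neighbourhood $\mathcal U$ of $U$.
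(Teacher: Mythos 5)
Your proposal is correct and shares its skeleton with the paper's proof --- both reduce the statement to showing that the differential of $\pi$ restricted to the Maurer--Cartan manifold is a linear isomorphism at each constant $A_c\in U$, and both then invoke the inverse function theorem plus a (glossed) gluing step --- but you carry out the key step by a genuinely different mechanism. The paper reuses the Kuranishi machinery: by Proposition \ref{prop:kappa-emb} and Theorem \ref{thm:extiso}, the linearization $\kappa_{\mathrm{lin}}=T_{A_c}\kappa$ identifies $T_{A_c}\Omega_r(N,W|_U)^{MC}$ with $\Omega_r(N,W)^{0,cl}$, and the composite $\pi\circ\kappa_{\mathrm{lin}}^{-1}$ is invertible because it is unipotent: with respect to the finite filtration $\mathcal F^i=\bigoplus_{k\le i}\Omega_{r+}^k(N,W^{-k})$ it is the identity plus a strictly filtration-lowering operator. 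You instead prove the equivalent transversality $\Omega_{r+}(N,W)^0=\ker(d-L)\oplus(C\otimes W)^0$ from scratch, solving the triangular system $du^{(p)}=\pm\,\ell\bigl(z^{(p+1)}+u^{(p+1)}\bigr)$ downward in form degree; this requires two inputs that the paper's proof avoids here, namely the Poincar\'e lemma on star-shaped $N$ with $C^r$ regularity (supplied by the homotopy operator $h$) and the open mapping theorem to invert $d:C^p\otimes W^{-p}\to\Omega_r^{p+1}(N)^{cl}\otimes W^{-p}$ boundedly --- in the paper that analytic content is already packaged inside the invertibility of $\kappa$. Both arguments ultimately exploit the same nilpotency (the degree-shifting operator strictly lowers the filtration), so they are cousins; yours buys an explicit, self-contained picture of why the gauge complement is transverse to the MC tangent space, the paper's buys brevity. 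Two points you should still nail down: (i) the identification $T_{A_c}\mathcal M=\ker(d-L)$ is not just ``linearize the equation'' --- only the inclusion $T_{A_c}\mathcal M\subseteq\ker(d-L)$ is automatic, and your surjectivity argument needs the equality; the clean justification is that $\kappa$ is a chart, so $T_{A_c}\mathcal M=\kappa_{\mathrm{lin}}^{-1}\bigl(\Omega_r(N,W)^{0,cl}\bigr)$, which equals $\ker(d-L)$ by linearizing Theorem \ref{thm:extiso}. (ii) Passing from ``local diffeomorphism at each point of the non-compact set $U$, identity on $U$'' to a single open embedding on a neighbourhood $\mathcal U$ needs a uniformity argument (e.g.\ shrinking injectivity radii using boundedness of the linear map $\pi$); the paper glosses this step as well, so you are no worse off there.
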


\begin{proof}
Let $\pi^\text{res}:=\pi|_{\Omega_{r}(N,W|_U)^{MC}}$.
To show that there exists $\mathcal{U}$ with the desired property it is enough to show that the tangent map $T_{A_c}\pi^\text{res}$ is a linear isomorphism for each $A_c\in U$. We  know that the map 
$$\kappa^\text{res}:=\kappa|_{\Omega_{r}(N,W|_U)^{MC}}:\Omega_{r}(N,W|_U)^{MC}\to\Omega_{r}(N,W)^{0,cl}$$
 is an open embedding, and thus the tangent map $T_{A_c}\kappa^\text{res}$ is a linear isomorphism for every $A_c\in U$.
Explicitly,
$$T_{A_c}\kappa^\text{res}(A)=A-hC_\text{Q,\text{lin}}(A),$$
where
$$C_\text{Q,lin}^i(A)=\frac{\partial C^i_Q}{\partial \xi^j}(A_c)A^j$$
is the linearization of $C_Q$ at $A_c$.

Let us introduce a filtration $\mathcal{F}$ of the space $\Omega_{r+}(N,W)^0$ with
$$\mathcal{F}^i=\bigoplus_{k\leq i}\Omega_{r+}^k(N,W^{-k}).$$
To show that $T_{A_c}\pi^\text{res}$ is a linear isomorphism it is enough to verify that the  linear map
$T_{A_c}\pi^\text{res}\circ(T_{A_c}\kappa^\text{res})^{-1}$ is filtered and that it
induces the identity map on the associated graded, and thus is a linear isomorphism. Let $B\in \Omega_{r}(N,W)^{0,cl}$ and $B\in\mathcal{F}^i$. Then
$$(T_{A_c}\kappa^\text{res})^{-1}(B)=B+hC_{Q,\text{lin}}\bigl((T_{A_c}\kappa^\text{res})^{-1}(B)\bigr).$$
Since $hC_{Q,\text{lin}}\bigl((T_{A_c}\kappa^\text{res})^{-1}(B)\bigr)\in\mathcal{F}^{i-1}$ and $B$ is closed, we see that
$$T_{A_c}\pi^\text{res}\circ(T_{A_c}\kappa^\text{res})^{-1}(B)=B\mod\mathcal{F}^{i-1}$$
as we wanted to show.
\end{proof}

\section{Filling horns}\label{sec:fh}

Let $\Delta^n$ be the $n$-dimensional simplex. By Theorem \ref{thm:submf-loc}, the sets
$$K_n^{big}(\mathcal{A}_{W|_U},Q):=\Omega(\Delta^n,W|_U)^{MC}$$
are naturally Fr\'echet manifolds. The affine maps between simplices then make the collection of $K_n^{big}$'s to a simplicial Fr\'echet manifold.

For any simplicial set $S_\bullet$ and any $0\leq k\leq n$ let $S_{n,k}$ denote the corresponding horn.\footnote{Let
us recall the definitions: a simplicial set is a contravariant functor from the category $\Delta$, whose objects are the finite sets $[n]=\{0,1,\dots,n\}$ ($n=0,1,2,\dots$) and morphisms the non-decreasing maps, to the category of sets. If $S$ is such a functor, we use the notation $S_n:=S([n])$. The simplest examples are the simplicial sets $\Delta[n]_m:=\on{Hom}_\Delta([m],[n])$, which satisfy $S_n=\on{Hom}(\Delta[n],S)$ for any simpicial set $S$. Another example is given by the ``horn'' simplicial set $\Lambda[n,k]$ ($0\leq k\leq n$) given by $\Lambda[n,k]_m:=\bigl\{f\in\on{Hom}_\Delta([m],[n]);\bigl| f([m])\setminus\{k\}\bigr|\leq n-2\bigr\}$. If $S$ is a simplicial set and if $0\leq k\leq n$ then the corresponding horn is the set $S_{n,k}:=\Hom(\Lambda[n,k],S)$. The morphism $\Lambda[n,k]\to\Delta[n]$, given by the inclusion, gives us a map $S_n\to S_{n,k}$.
}
 The $k$-th horn of the geometric $n$-simplex $\Delta^n$ (i.e.\ the union of the $n-1$-dimensional faces containing the $k$-th vertex) will be denoted $\Lambda^n_k$.

\begin{defn}[A.~Henriques]\label{def:Kanm}
A simplicial manifold $K_\bullet$ is \emph{Kan} if the map $K_n\to K_{n,k}$ is a surjective submersion for any $0\leq k\leq n$.
\end{defn}

Here a smooth map $f:X\to Y$ between two Fr\'echet manifolds is called a \emph{submersion} if locally, up to local diffeomorphisms, $f$ is a surjective continuous linear map between Fr\'echet spaces, admitting a right inverse.
To make Definition \ref{def:Kanm} meaningful, one needs to  check (inductively in $n$) that $K_{n,k}$ are manifolds ($K_{n,k}$ is defined as the horn of $K_\bullet$ seen as a simplicial set). This was done by Henriques \cite{Henriques}.

Any simplicial topological vector space $X_\bullet$ is automatically a Kan simplicial manifold. Indeed, there is an explicit continuous linear map $X_{n,k}\to X_n$ due to Moore \cite{Moore} (in the proof of his theorem stating that any simplicial group is Kan) which is right-inverse to the horn map $X_n\to X_{n,k}$. Namely, supposing without loss of generality that $k=n$, if $(y_0,\dots,y_{n-1})\in X_{n,n}$ ($y_i\in X_{n-1}$), one defines $w_0,\dots, w_{n-1}\in X_n$ via 
\begin{equation}\label{eq:moore}
w_0=s_{0}y_0,\quad w_i=w_{i-1}-s_id_iw_{i-1}+s_iy_i
\end{equation}
 ($i=1,\dots,n-1$, $d_i$ and $s_i$ are the face and degeneracy maps respectively), and then $d_iw_{n-1}=y_i$ for all $0\leq i\leq n-1$, i.e.\ $w_{n-1}$ fills the horn $(y_0,\dots,y_{n-1})\in X_{n,n}$.

Here is the principal result of this section (it is valid also for $C^r$-forms, when we get Kan simplicial Banach manifolds).
\begin{thm}\label{thm:kbig-loc}
$K^{big}_\bullet(\mathcal{A}_{W|_U},Q)$ is a Kan simplicial Fr\'echet manifold.
\end{thm}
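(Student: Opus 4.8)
The plan is to exploit the Kuranishi map $\kappa$ to compare $K^{big}_\bullet$ with the simplicial vector space of closed forms, which is Kan for free. By Theorem~\ref{thm:submf-loc} each $K^{big}_n=\Omega(\Delta^n,W|_U)^{MC}$ is a Fr\'echet manifold and the affine maps make $K^{big}_\bullet$ simplicial, so it remains to show that for every $0\le k\le n$ the horn map $K^{big}_n\to K^{big}_{n,k}$ is a surjective submersion. Write $X^{cl}_\bullet:=\Omega(\Delta^\bullet,W)^{0,cl}$ for the simplicial vector space of closed $W$-valued forms of total degree $0$ (pullback along affine maps preserves both closedness and linearity); by Moore's section \eqref{eq:moore} it is a Kan simplicial topological vector space whose horn map $X^{cl}_n\to X^{cl}_{n,k}$ is a surjective linear map admitting a continuous linear section. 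I will transport the Kan property from $X^{cl}_\bullet$ to $K^{big}_\bullet$ through $\kappa$.

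The one delicate point is that $\kappa$ is \emph{not} a simplicial map, since the homotopy operator $h$ is tied to a chosen contraction. First I would fix $k$ and take for $h_k$ the de Rham homotopy operator associated with the linear contraction of $\Delta^n$ onto its vertex $v_k$ (the vertex opposite the missing $k$-th face), and set $\kappa_k(A)=A-h_k(F_Q(A))$. The key observation is that $\kappa_k$ \emph{does} commute with pullback along the inclusion of any sub-simplex containing $v_k$: such an inclusion intertwines the contractions, hence $h_k$, and $F_Q$ commutes with pullback since it is substitution into a fixed algebraic expression. Now every face $d_i(\Delta^{n-1})$ making up the horn $\Delta^n_k$ has $i\ne k$ and therefore contains $v_k$, and every codimension-$2$ overlap omits two vertices other than $v_k$ and so still contains $v_k$. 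Consequently $\kappa_k$, applied face by face, carries the entire horn diagram defining $K^{big}_{n,k}$ isomorphically onto the corresponding (open) sub-diagram of the horn diagram of $X^{cl}_\bullet$, and by Theorem~\ref{thm:extiso} and Proposition~\ref{prop:kappa-emb} it identifies $K^{big}_n$ with the open subset $\mathcal O_n:=\kappa_k(K^{big}_n)\subset X^{cl}_n$.

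Under this identification the horn map $K^{big}_n\to K^{big}_{n,k}$ becomes the restriction of the linear horn map $X^{cl}_n\to X^{cl}_{n,k}$ to the open sets $\mathcal O_n$ and $\mathcal O_{n,k}$. It is a submersion because its differential at every point is the split surjection underlying Moore's section. For surjectivity I would take a horn, pass to the corresponding closed form on $\Delta^n_k$, and fill it with (the $k$-th analogue of) Moore's formula \eqref{eq:moore} to a closed $B\in X^{cl}_n$; it then remains to check $B\in\mathcal O_n$, so that $A:=\kappa_k^{-1}(B)$ is the desired filler. The $0$-form part of $B$ is the constant map into $U$ prescribed by the horn---Moore's formula preserves it, as affine pullbacks fix constant functions---so by Proposition~\ref{prop:K-inv} the only thing left to verify is that the unstable-manifold section $a_{max}$ of the vector field $\hat E$ extends over all of $\Delta^n$; this holds because $\hat E$ projects to the radially expanding Euler field, which sweeps out all of the star-shaped $\Delta^n$, while the contracting (fiber) directions of $\hat E$ keep $a_{max}$ bounded.

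I expect the main obstacle to be exactly the non-simplicial nature of $\kappa$: the whole argument hinges on choosing the contraction through the vertex $v_k$ so that $\kappa_k$ is compatible with all faces and sub-faces of the horn $\Delta^n_k$, and then on confirming that the Moore filler lands in the open image $\mathcal O_n$ of $\kappa_k$ rather than merely in $X^{cl}_n$. Everything else---submersivity, the manifold structure, and the passage to $C^r$-forms---follows formally from Proposition~\ref{prop:kappa-emb} and the corresponding statements for the linear model.
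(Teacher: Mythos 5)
Your manifold structure and submersion arguments coincide with the paper's: the paper also places the $k$-th vertex at the origin, applies the corresponding Kuranishi map to get a commutative square relating the horn map of $K^{big}_\bullet$ to that of the simplicial vector space $\Omega(\Delta^\bullet,W)^{0,cl}$, and deduces submersivity from the fact that the horizontal arrows are open embeddings. The genuine gap is in your surjectivity step, at exactly the point you flag: you need the Moore filler $B$ to lie in the open image $\mathcal O_n=\kappa_k(K^{big}_n)$, and your argument for this is false. The vertical part of $\hat E$ is not just the contracting linear part $-\sum_J k_J y^J\partial/\partial y^J$; it also contains the nonlinear term $F_Q(B+i_E\,\cdot)$, and solutions of \eqref{eq:a} can blow up at finite distance from the vertex. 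This is precisely why Proposition \ref{prop:K-inv} is formulated in terms of a \emph{maximal} $N'\subset N$ together with the criterion ``$A$ exists iff $N'=N$''. If your claim (``the unstable manifold always extends over all of $\Delta^n$'') were correct, then every closed form whose vertex value lies in $U$ would be in the image of $\kappa$, which fails already in the simplest examples: take $W=\R\oplus\R[1]$ with $Q$ the Lie algebroid differential of the incomplete vector field $x^2\partial_x$ on $M=\R$. Recovering $A=\kappa^{-1}(B)$ from a closed $B$ with constant part $x_0$ and one-form part $c\,dt$ on $\Delta^1$ amounts to solving $\dot x=c\,x^2$, $x(0)=x_0$, whose solution $x_0/(1-x_0ct)$ blows up inside the simplex once $x_0c>1$; so $\mathcal O_n$ is a proper open subset of the closed forms, and nothing in Moore's linear formula \eqref{eq:moore} forces the filler into it.

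The paper's proof avoids this issue entirely, and this is the idea missing from your proposal: one never asks that the extension $B$ lie in $\kappa(K^{big}_n)$. By Proposition \ref{prop:K-inv}, Equation \eqref{eq:a} has a solution $a$ on some open $N'\subset\Delta^n$ with $\Delta^n_k\subset N'$ (such an $N'$ always exists because over the horn the solution is forced: $a=\mathcal L_E^{-1}F_Q(A_{\mathrm{horn}})$), giving $A'=B+i_Ea\in\Omega(N',W|_U)^{MC}$ with $A'|_{\Delta^n_k}=A_{\mathrm{horn}}$. One then chooses a cutoff $\phi\in C^\infty(\Delta^n)$ with $\phi|_{\Delta^n_k}=1$ and $\phi|_{\Delta^n\setminus N'}=0$, and pulls back along the squashing map $h:\Delta^n\to N'$, $h(x)=\phi(x)x$ (well defined because $N'$ is star-shaped). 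Since the pullback of a dg morphism is a dg morphism, $A:=h^*A'\in K^{big}_n$, and since $h$ is the identity on the horn, $A$ fills $A_{\mathrm{horn}}$. A posteriori this shows that \emph{some} extension of $B_{\mathrm{horn}}$ lies in $\mathcal O_n$, namely $\kappa(h^*A')$, but it is not the Moore filler. Your proof becomes correct if its last third is replaced by this cutoff argument. (A minor further point: the $0$-form part of $B_{\mathrm{horn}}$ is not a constant map in general; what matters for Proposition \ref{prop:K-inv} is only its value at the vertex $v_k$, where the homotopy-operator term vanishes.)
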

\begin{proof}

Let $K_\bullet:=K^{big}_\bullet(\mathcal{A}_{W|_U},Q)$ and let $X_\bullet:=\Omega(\Delta^\bullet,W)^{0,cl}$. As $X_\bullet$ is a simplicial Fr\'echet vector space, it is a Kan simplicial manifold.

Let us first prove that the horn maps $K_n\to K_{n,k}$ are submersion. Let us place $\Delta^n$ to $\R^n$ so that the $k$'th vertex is at $0\in\R^n$. By applying the Kuranishi map $\kappa$ we get the commutative square
$$
\begin{tikzcd}
K_{n}\arrow{r}{\kappa}\arrow{d} & X_n\arrow{d}\\
K_{n,k}\arrow{r}{\kappa} & X_{n,k}
\end{tikzcd}
$$
As the horizontal arrows are open embeddings and the vertical arrow $X_{n}\to X_{n,k}$ is a submersion (in fact a continuous linear map admitting a right inverse), $K_n\to K_{n,k}$ is also a submersion.

It remains to prove that $K_n\to K_{n,k}$ is surjective. 
Let   $A_\text{horn}\in K_{n,k}$, let $B_\text{horn}:=\kappa(A)\in X_{n,k}$. We choose $B\in X_n$ extending $B_\text{horn}$.

By Proposition \ref{prop:K-inv}, Equation \eqref{eq:a} has a solution $a$ on an open subset  $N'\subset\Delta^n$ such that $\Lambda^n_k\subset N'$. The form
$$A'=B+i_Ea$$
thus satisfies
$$A'\in \Omega\bigl(N', W|_{U}\bigr)^{MC},\  
A'|_{\Lambda^n_k}=A_\text{horn}.$$

\begin{figure}[h!tb]
\center
\begin{tikzpicture}
\draw[->]  (0.5,0.75) to[out=100, in=0] node[anchor=west,left,pos=1.2]{$A_{\text{horn}}\in\Omega(\Lambda^n_k,W)^{MC}$} (0,1);
\draw[->,dashed] (2,0.75) -- (3,0.75)  node[anchor=north,above,pos=0.5]{$\kappa$};
\draw[very thick] (4,0) -- (6,0);
\draw[very thick] (4,0) -- (5,1.5);
\draw[->] (4.2,0) to[out=-90,in=180] (4.5,-0.5) node[anchor=east,right]{$B_\text{horn}\in\Omega(\Lambda^n_k,W)^{0,cl}$};
\filldraw[fill=blue!20] 
(4,0) -- (6,0) -- (5,1.5) -- (4,0);
\draw[very thick] (6,0)--(4,0) -- (5,1.5);
\draw (5,1.5) -- (6,0);
\draw[->] (5,1) to[out=90,in=-90] (6,1.5) node[anchor=north,above]{$B\in\Omega(\Delta^n,W)^{0,cl}$};
\filldraw[fill=blue!20,dashed]
(0,0) -- (1,1.5) arc (45:-45:3mm) -- (0.5,0.25) arc (-190:-90:1mm) -- (1.82,0.2) arc(80:0:2mm);
\draw[very thick] (2,0)--(0,0)--(1,1.5);
\draw[->] (0.25,0.15) to[out=-90,in=90] (-0.2,-0.5) node[anchor=south,below]{$A'=B+i_Ea\in\Omega(N',W)^{MC}$}; 
\end{tikzpicture}
\caption{Filling horns in $K^\text{big}_\bullet$.}
\label{fig:hornfill}
\end{figure}

Let now $\phi\in C^\infty(\Delta^n)$ satisfy $0\leq\phi\leq1$, $\phi|_{\Lambda^n_k}=1$, and $\phi|_{\Delta^n\setminus N'}=0$.
Let $h:\Delta^n\to N'$ be given by $x\mapsto \phi(x)x$. Then
$$A:=h^*A'$$
satisfies
$$A\in \Omega\bigl(\Delta^n, W|_{U}\bigr)^{MC}=K_n,\  
A|_{\Lambda^n_k}=A_\text{horn}.$$
This proves surjectivity of $K_n\to K_{n,k}$, as the map sends $A\in K_n$ to $A_\text{horn}\in K_{n,k}$.
\end{proof}

\section{Globalization}\label{sec:glob}
So far we considered dg algebras of the form $\mathcal A_{W|_U}$. Let now $V\to M$ be a negatively-graded vector bundle, and let us consider the algebra $\mathcal A_V$ with a chosen differential $Q$. In this section we shall prove that Theorems \ref{thm:submf-loc} and \ref{thm:kbig-loc} hold also in this more general setting.

To prove these results we need to consider the following relative situation. Let $N\subset\R^n$ be a compact star-shaped $n$-dimensional submanifold with corners. The restriction $\mathcal A_{T[1]\R^n}=\Omega(\R^n)\to\Omega(N)$ is a morphism of dgcas. The corresponding element $A_0\in\Omega(N,T[1]\R^n)^{MC}$ is 
$$A_0=\sum_k E_k x^k +e_k dx^k$$
where $x^k$ are the coordinates on $N\subset\R^n$ and $E_k$ and $e_k$ is the standard basis of $\R^n$ and of $\R^n[1]$ respectively. Application of $\kappa$ gives
$$B_0:=\kappa(A_0)=\sum_ke_k dx^k\in\Omega(N,T[1]\R^n)^{0,cl}.$$

Suppose now that $W$ is a non-positively graded vector space and $p:W\to T[1]\R^n$ a surjective graded linear map. Let $U\subset W^0$ be open, $Q$ be a differential on $\mathcal A_{W|_U}$, and let us suppose that the pullback $p^*:\mathcal A_{T[1]\R^n}=\Omega(\R^n)\to\mathcal A_{W|_U}$ is a morphism of dgcas (i.e.\ that $p:W|_U\to T[1]\R^n$ is a morphism of NQ-manifolds). Let
$$\Omega(N,W|_U)^{0,A_0}:=\{A\in\Omega(N,W|_U)^{0}; p(A)=A_0\}$$
and similarly
$$\Omega(N,W)^{0,B_0}:=\{B\in\Omega(N,W)^{0}; p(B)=B_0\}.$$

The elements of
$$\Omega(N,W|_U)^{MC,A_0}:=\Omega(N,W|_U)^\textit{MC}\cap\Omega(N,W|_U)^{0,A_0}$$
correspond to those dgca morphisms $\mathcal A_{W|_U}\to\Omega(N)$ for which the diagram
$$
\begin{tikzcd}
\Omega(N)& \mathcal A_{W|_U}\arrow{l}\\
&\Omega(\R^n)\arrow{u}[swap]{p^*}\arrow{ul}
\end{tikzcd}
$$
is commutative, i.e.\ of those morphisms $T[1]N\rsa W|_U$ of NQ-manifolds for which
$$
\begin{tikzcd}
T[1]N\arrow[squiggly]{r}\arrow[squiggly,swap]{dr} & W|_U\arrow[squiggly]{d}{p}\\
&T[1]\R^n
\end{tikzcd}
$$
is commutative.

\begin{prop}\label{prop:rel-mfld}
$$\Omega(N,W|_U)^{MC,A_0}=\kappa^{-1}\bigl(\Omega(N,W)^{0,cl,B_0}\bigr).$$
 In particular, $\Omega(N,W|_U)^{MC,A_0}\subset\Omega(N,W|_U)^{0,A_0}$ is a Fr\'echet submanifold closed in the $C^0$ topology.
\end{prop}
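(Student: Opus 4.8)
The plan is to deduce this relative statement from the absolute one, Theorem~\ref{thm:submf-loc}, the only new ingredient being an intertwining of Kuranishi maps. Write $\kappa$ for the Kuranishi map of $(\mathcal A_{W|_U},Q)$ and $\kappa_0$ for the one of $(\Omega(\R^n),d)$ on $T[1]\R^n$; both are built from the same homotopy operator $h$ coming from $R(x,t)=tx$. The key claim is
$$p\circ\kappa=\kappa_0\circ p\qquad\text{on }\Omega(N,W|_U)^0.$$
To prove it I would observe two things. First, the linear map $p:W\to T[1]\R^n$ induces a map on $\Omega(N,-)$ acting only on the coefficient factor, so it commutes with $h$, which acts only on the form factor. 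Second, the hypothesis that $p:W|_U\rsa T[1]\R^n$ is a morphism of NQ-manifolds says exactly that $p^*$ intertwines the differentials; pairing against an arbitrary $\eta\in(T[1]\R^n)^*$ and using $Q(p^*\eta)=p^*(d\eta)$ one obtains the pointwise identity $p\bigl(F_Q(A)\bigr)=F_d\bigl(p(A)\bigr)$ for every $A$, where $F_d\in\mathcal A_{T[1]\R^n}\otimes T[1]\R^n$ is the analogue of $F_Q$ for the de Rham differential. Combining the two, $p(\kappa(A))=p(A)-h(p(F_Q(A)))=p(A)-h(F_d(p(A)))=\kappa_0(p(A))$.

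Next I would use that $\kappa_0$ is injective (it is an open embedding by Proposition~\ref{prop:kappa-emb}) together with $\kappa_0(A_0)=B_0$, which was computed just before the statement. For any $A\in\Omega(N,W|_U)^0$ the intertwining relation then gives
$$p(A)=A_0\iff\kappa_0(p(A))=\kappa_0(A_0)=B_0\iff p(\kappa(A))=B_0.$$
On the other hand, Theorem~\ref{thm:extiso} says $A\in\Omega(N,W|_U)^{MC}$ iff $\kappa(A)$ is closed. Intersecting the two equivalences yields
$$A\in\Omega(N,W|_U)^{MC,A_0}\iff\kappa(A)\in\Omega(N,W)^{0,cl}\text{ and }p(\kappa(A))=B_0\iff\kappa(A)\in\Omega(N,W)^{0,cl,B_0},$$
which is exactly the asserted identity $\Omega(N,W|_U)^{MC,A_0}=\kappa^{-1}\bigl(\Omega(N,W)^{0,cl,B_0}\bigr)$.

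For the submanifold assertion I would repeat the argument of Theorem~\ref{thm:submf-loc} in the constrained setting. The subspace $\Omega(N,W)^{0,cl,B_0}$ is the fibre over $B_0$ of the continuous linear surjection $p_*:\Omega(N,W)^{0,cl}\to\Omega(N,T[1]\R^n)^{0,cl}$, hence a closed affine subspace (modelled on $\Omega(N,\ker p)^{0,cl}$) and in particular closed in the $C^0$-topology. By the intertwining relation $\kappa$ carries the affine space $\Omega(N,W|_U)^{0,A_0}=\kappa^{-1}\bigl(\Omega(N,W)^{0,B_0}\bigr)$ diffeomorphically onto an open subset of $\Omega(N,W)^{0,B_0}$, since $\kappa$ is an open embedding (Proposition~\ref{prop:kappa-emb}). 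Under this diffeomorphism $\Omega(N,W|_U)^{MC,A_0}$ is the preimage of the closed affine subspace $\Omega(N,W)^{0,cl,B_0}$, so it is a Fr\'echet submanifold, and it is $C^0$-closed because $\kappa$ is $C^0$-continuous while the closed forms form a $C^0$-closed subspace.

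The only genuine work is the first step, establishing $p\circ\kappa=\kappa_0\circ p$: one must unwind the definition of ``$p$ is a morphism of NQ-manifolds'' into the pointwise identity $p(F_Q(A))=F_d(p(A))$ and note that $p_*$ and $h$ commute. Once this is in hand, the proposition is a purely formal transcription of the absolute case, with the affine constraints $p=A_0$ and $p=B_0$ replacing the full ambient spaces and with the injectivity of $\kappa_0$ used to convert the first constraint into the second.
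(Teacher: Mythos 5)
Your proof is correct and takes essentially the same route as the paper's: the equality is deduced from Theorem \ref{thm:extiso} together with $\kappa_0(A_0)=B_0$, and the submanifold and $C^0$-closedness claims from the open-embedding and $C^0$-continuity properties of $\kappa$ combined with the $C^0$-closedness of the affine subspace $\Omega(N,W)^{0,cl,B_0}$. The intertwining relation $p\circ\kappa=\kappa_0\circ p$ (via $p(F_Q(A))=F_d(p(A))$ and $[p_*,h]=0$) and the injectivity of $\kappa_0$, which you correctly identify as the only genuine work, are precisely the implicit content behind the paper's one-line justification of the equality.
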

\begin{proof}
The equality follows from Theorem \ref{thm:extiso} and from $B_0=\kappa(A_0)$. The fact that 
$$\Omega(N,W|_U)^{MC,A_0}\subset\Omega(N,W|_U)^{0,A_0}$$
 is a Fr\'echet submanifold closed in the $C^0$ topology then follows from the fact that $\kappa$ is an open embedding and $C^0$-continuous, and from the fact that $\Omega(N,W)^{0,cl,B_0}$ is a $C^0$-closed affine subspace of $\Omega(N,W)^{0}$.
\end{proof}

We can now prove our first globalized result.

\begin{thm}\label{thm:submfd}
 Let $N\subset\R^n$ be a star-shaped compact $n$-dimensional submanifold with corners, $V\to M$ a negatively graded vector bundle, and $Q$ a differential on the gca $\mathcal A_V$. Then the subset
$$\{\text{dg-morphisms }\mathcal A_V\to\Omega(N)\}\subset\{\text{graded algebra morphisms }\mathcal A_V\to\Omega(N)\},$$
i.e.
$$\Omega(N,V)^{MC}\subset \Omega(N,V)^0,$$
is a smooth Fr\'echet submanifold  closed in the $C^0$-topology.
\end{thm}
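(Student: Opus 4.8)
The plan is to reduce the general bundle $V\to M$ to the already-settled flat case $\mathcal A_{W|_U}$ (Theorem \ref{thm:submf-loc}), by realising $(V,Q)$ as a sub-NQ-manifold of a \emph{trivial} one over an open subset of a vector space, and then to cut out $\Omega(N,V)^{MC}$ by the method of Proposition \ref{prop:rel-mfld}. The nice feature is that this can be done globally in one stroke, so no patching over charts of $M$ is needed.

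First I would fix a Whitney embedding $M\hookrightarrow\R^{m'}$ as a closed submanifold, together with a tubular neighbourhood $U'\subset\R^{m'}$ and a smooth retraction $\rho:U'\to M$ (with inclusion $\iota:M\hookrightarrow U'$, so $\rho\iota=\on{id}_M$). Since $V$ is a finite-rank bundle over a manifold, it is a direct summand of a trivial graded bundle, $V\oplus V^\perp\cong M\times\R^d$, with $\R^d$ graded so as to match $V\oplus V^\perp$. Set $\hat V:=\rho^*(V\oplus V^\perp)=U'\times\R^d$; this is exactly a trivial bundle of the form $W|_{U'}$ with $W^0=\R^{m'}$, $U'\subset W^0$ open, and $W^{<0}=\R^d$. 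I then extend the differential: put $Q':=Q\oplus0$ on $\mathcal A_{V\oplus V^\perp}$ and define $\hat Q$ on $\mathcal A_{\hat V}$ as the unique derivation with $\hat Q\,\rho^*\phi=\rho^*(Q'\phi)$ on generators pulled back along $\rho$, and $\hat Q\,n^a=0$ on the normal coordinates $n^a$ of the tubular neighbourhood. A check on generators gives $\hat Q^2=0$, and $\rho\iota=\on{id}$ makes both $\iota:V\oplus V^\perp\rsa\hat V$ and $\rho:\hat V\rsa V\oplus V^\perp$ into morphisms of NQ-manifolds.

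By Theorem \ref{thm:submf-loc} applied to $(\mathcal A_{W|_{U'}},\hat Q)$, the space $\Omega(N,\hat V)^{MC}$ is a $C^0$-closed Fr\'echet submanifold of $\Omega(N,\hat V)^0$, and $\kappa$ is an open embedding there. The key observation is that dg-morphisms $\mathcal A_V\to\Omega(N)$ are precisely those $A\in\Omega(N,\hat V)^{MC}$ which kill the complementary generators, i.e.\ whose components along the normal coordinates $n^a$ and along the fibre coordinates $\eta^A$ of $V^\perp$ vanish. Because $\hat Q\,n^a=\hat Q\,\eta^A=0$, these components are themselves closed forms and, crucially, are left unchanged by $\kappa$; hence, exactly as in Proposition \ref{prop:rel-mfld},
$$\Omega(N,V)^{MC}=\kappa^{-1}\bigl(\{B\in\Omega(N,\hat V)^{0,cl}:\ B^{n^a}\equiv0,\ B^{\eta^A}\equiv0\}\bigr).$$
The right-hand affine subspace is $C^0$-closed, so by the open-embedding property of $\kappa$ (Proposition \ref{prop:kappa-emb}) the set $\Omega(N,V)^{MC}$ is a $C^0$-closed Fr\'echet submanifold of $\Omega(N,\hat V)^0$. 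Since $\Omega(N,V)^0$ is itself a $C^0$-closed submanifold of $\Omega(N,\hat V)^0$ (it is cut out by ``base map in $M$'' and ``$V^\perp$-components zero'', $M$ being closed in $U'$), the same $\kappa$-charts exhibit $\Omega(N,V)^{MC}$ as a $C^0$-closed submanifold of $\Omega(N,V)^0$, as required.

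The main obstacle I expect is the bookkeeping around the extension $\hat Q$ and the compatibility of the two submanifold structures: one must verify $\hat Q^2=0$ and that the vanishing of the $n^a$- and $\eta^A$-components is genuinely equivalent to factoring through $\mathcal A_V$ (this uses surjectivity of restriction to $M$ and $\rho\iota=\on{id}$), and one must check that the $\kappa$-chart for $\Omega(N,\hat V)^{MC}$ restricts to a chart for $\Omega(N,V)^{MC}$ inside the smaller space $\Omega(N,V)^0$ rather than merely inside $\Omega(N,\hat V)^0$. Everything else is a direct transcription of the flat case.
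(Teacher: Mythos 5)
Your reduction strategy (embed $V$ in a trivial NQ-manifold of the form $W|_{U'}$, then cut out $\Omega(N,V)^{MC}$ by linear conditions that the Kuranishi map preserves) would indeed work \emph{if} such an envelope existed, and your second half correctly mirrors Proposition \ref{prop:rel-mfld}. The gap is at the very first step: neither $Q':=Q\oplus 0$ nor $\hat Q$ is well defined. The algebra $\mathcal A_{V\oplus V^\perp}$ is $\mathcal A_V\otimes_{C^\infty(M)}\Gamma\bigl(S((V^\perp)^*)\bigr)$, a tensor product over $C^\infty(M)$, and $Q$ is not $C^\infty(M)$-linear; concretely, a derivation $Q'$ must satisfy $Q'(f\eta)=(Qf)\,\eta+f\,Q'\eta$ for $f\in C^\infty(M)$, $\eta\in\Gamma((V^\perp)^*)$, and $Q$ does not annihilate $C^\infty(M)$ (for a Lie algebroid, $Qf=\rho^*df$ with $\rho$ the anchor). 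So ``$Q'$ vanishes on the fibre coordinates of $V^\perp$'' is a frame-dependent prescription: if $Q'\eta^A=0$ in one local frame, then in another frame $\tilde\eta^A=g^A_B\eta^B$ one gets $Q'\tilde\eta^A=(Qg^A_B)\,\eta^B\neq 0$, and the local definitions do not glue. Your construction therefore silently requires a \emph{global} frame of $V^\perp$, i.e.\ that $V$ be stably trivial; likewise ``$\hat Q\,n^a=0$'' requires a global trivialization of the normal bundle of $M\hookrightarrow\R^{m'}$, i.e.\ that $TM$ be stably trivial. Neither holds in general (take $M=\mathbb{CP}^2$, or $V$ with nonzero Pontryagin classes).

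This is not merely a defect of the formula: the existence of \emph{any} differential on $\mathcal A_{V\oplus V^\perp}$ extending $Q$ is genuinely obstructed. For instance, with $V=T[1]M$, $Q=d$, $V^\perp=F[1]$, writing out $(Q')^2=0$ on generators shows the required data is an ordinary connection $\nabla$ on $F$ together with tensors $\omega$, $c$ satisfying $R_\nabla=-\,c(\omega,\cdot)$ plus Bianchi/Jacobi-type identities; for $c=0$ this forces $\nabla$ flat, and in general it is Mackenzie-type extension data whose existence can fail. This is exactly why the paper's proof is local and involves no extension of $Q$ at all: it forms the product $T[1]\R^n\times V$ with its \emph{canonical} differential $d+Q$, restricts to an open $U'\subset\R^n\times M$ containing the graph of the base map $f:N\to M$, over which $p_M^*V$ is trivial, transports the differential through an isomorphism $t$ onto a model $W|_U$ fibred over $T[1]\R^n$ (the relative setting of Proposition \ref{prop:rel-mfld}, where the morphism to $T[1]\R^n$ records the simplex coordinates), and then patches, using that being a $C^0$-closed submanifold is a local property with respect to the $C^0$-open cover $\{\Omega(N,V)^0|_{U'}\}$. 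To repair your argument you would have to localize over $M$ in the same way, which is precisely the paper's proof; the ``global in one stroke'' feature cannot be salvaged.
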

\begin{proof}
For any smooth map $f:N\to M$ we can find an open subset $U'\subset\R^n\times M$ containing  the graph of $f$ such that
\begin{enumerate}
\item there is an open subset $U\subset\R^n\times\R^m$ ($m=\dim M$) and a diffeomorphism $g:U'\to U$ such that $g$ is trivial over $\R^n$ (i.e.\ such that $\forall (x,y)\in U'\subset\R^n\times M$, $g(x,y)=(x,\tilde y(x,y))$ for some $\tilde y(x,y)\in\R^m$)
\item the graded vector bundle $(p_M^* V)|_{U'}$ is trivial (where $p_M:\R^n\times M\to M$ is the projection).
\end{enumerate}

As a result, there is a non-positively graded vector space $\tilde W$ with $\tilde W^0=\R^m$ (a local model of $V\to M$), an open subset $U\subset W^0$ where $W=T[1]\R^n\oplus\tilde W$ (so that $W^0=\R^{n+m}$) and an isomorphism of graded vector bundles
$$ t:(T[1]\R^n\times V)|_{U'}\to W|_U$$
such that the triangle
\begin{equation}\label{eq:aux-tri}
\begin{tikzcd}
(T[1]\R^n\times V)|_{U'}\arrow{rr}{t} \arrow{rd}[swap]{p'}& & W|_U\arrow{ld}{p}\\
& T[1]\R^n &
\end{tikzcd}
\end{equation}
commutes, where $p$ and $p'$  are the projections. Under the resulting isomorphism of gcas
$$t^*:\mathcal A_{W|_U}\cong \mathcal A_{(T[1]\R^n\times V)|_{U'}}$$
the differential $d+Q$ on   $A_{(T[1]\R^n\times V)|_{U'}}$ is sent to a differential $\tilde Q$ on $\mathcal A_{W|_U}$ such that 
$$p^*:\Omega(\R^n)\to \mathcal A_{W|_U}$$
is a dg morphism, as follows from the commutativity of  \eqref{eq:aux-tri}.

By Proposition \ref{prop:rel-mfld} we know that $\Omega(N,W|_U)^{MC,A_0}\subset\Omega(N,W|_U)^{0,A_0}$ is a $C^0$-closed submanifold. Let $\Omega(N,V)^0|_{U'}\subset\Omega(N,V)^0$ be the subset of those elements for which the graphs of their function parts $N\to M$ lie in $U'$. By construction $t^*$ restricts to a diffeomorphism $\Omega(N,W|_U)^{0,A_0}\cong \Omega(N,V)^0|_{U'}$ and  the image of $\Omega(N,W|_U)^{MC,A_0}$ is $\Omega(N,V)^{MC}\cap\Omega(N,V)^0|_{U'}$. As a result $\Omega(N,V)^{MC}\cap\Omega(N,V)^0|_{U'}\subset \Omega(N,V)^0|_{U'}$ is a $C^0$-closed submanifold, and thus also $\Omega(N,V)^{MC}\subset \Omega(N,V)^0$ is a $C^0$-closed submanifold, as the subsets $\Omega(N,V)^0|_{U'}\subset\Omega(N,V)^0$ form a $C^0$-open cover of $\Omega(N,V)^0$.
\end{proof}

\begin{rem}
Another proof of Theorem \ref{thm:submfd}, avoiding Proposition \ref{prop:rel-mfld}, is as follows. If  $\Omega(N,V)^{MC}|_{U'}$ is non-empty, one can prove that (after possibly decreasing $U'$) there is a NQ-manifold $\tilde W|_{\tilde U}$ and an open embedding of NQ-manifolds 
$$(T[1]\R^n\times V)|_{U'}\rsa T[1]\R^n\times \tilde W|_{\tilde U}$$
commuting with the projections to $T[1]\R^n$. As a result we can identify $\Omega(N,V)^{MC}|_{U'}\subset \Omega(N,V)^0|_{U'}$ with $\Omega(N,\tilde W|_{\tilde U})^{MC}\subset\Omega(N,\tilde W|_{\tilde U})^{0}$, which is a submanifold by Theorem \ref {thm:submf-loc}.
\end{rem}

Our second globalized result will be proved by similar methods.

\begin{thm}\label{ref:kbig}
The Fr\'echet simplicial manifold $\Omega(\Delta^\bullet,V)^{MC}$ is Kan.
\end{thm}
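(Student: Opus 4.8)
The plan is to reduce the global statement to the local one (Theorem \ref{thm:kbig-loc}) by the same localization-in-$M$ device used in the proof of Theorem \ref{thm:submfd}, and then to re-run the horn-filling argument of Theorem \ref{thm:kbig-loc} in the relative situation governed by Proposition \ref{prop:rel-mfld}. Recall that $K_n:=\Omega(\Delta^n,V)^{MC}$ is already a Fr\'echet manifold (Theorem \ref{thm:submfd}) and that the affine maps between simplices make $K_\bullet$ a simplicial manifold; being Kan means that each horn map $K_n\to K_{n,k}$ is a surjective submersion, and both conditions are local on $K_{n,k}$. I would therefore fix $0\le k\le n$, place $\Delta^n\subset\R^n$ with its $k$-th vertex at the origin, and check the two properties near an arbitrary point. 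Given such a point (an MC solution on $\Delta^n$, resp.\ a horn) with function part $f$, I would choose, exactly as in Theorem \ref{thm:submfd}, an open $U'\subset\R^n\times M$ containing the graph of $f$, a local model $W=T[1]\R^n\oplus\tilde W$, an open $U\subset W^0$, a differential $\tilde Q$ on $\mathcal A_{W|_U}$ for which $p^*$ (where $p\colon W|_U\to T[1]\R^n$ is the projection) is a dg-morphism, and the isomorphism $t$ identifying $\Omega(N,V)^{MC}|_{U'}$ with the relative space $\Omega(N,W|_U)^{MC,A_0}$, for $N=\Delta^n$ and for the open neighbourhoods of the horn appearing below.

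For submersivity I would use the commutative square obtained by applying the Kuranishi map $\kappa$,
$$
\begin{tikzcd}
\Omega(\Delta^n,W|_U)^{MC,A_0}\arrow{r}{\kappa}\arrow{d} & \Omega(\Delta^n,W)^{0,cl,B_0}\arrow{d}\\
\Omega(\Delta^n_k,W|_U)^{MC,A_0}\arrow{r}{\kappa} & \Omega(\Delta^n_k,W)^{0,cl,B_0},
\end{tikzcd}
$$
whose horizontal maps are open embeddings by Proposition \ref{prop:rel-mfld} (here $\kappa$ is compatible with restriction to the faces of the horn, because all of them contain the common vertex $0$, towards which $h$ contracts). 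The right vertical arrow is the restriction map of the relative closed forms; writing a closed form as $B_0$ plus a $\ker p=\tilde W$-valued closed form and filling the latter by Moore's linear formula \eqref{eq:moore} gives a continuous linear section, so it is a submersion. Hence the left vertical arrow, and therefore $K_n\to K_{n,k}$, is a submersion near the chosen point. The same local identifications show that $K_{n,k}$ is a manifold (it is locally modelled, via $\kappa$, on the affine space of relative closed forms on the horn), which makes the word ``submersion'' meaningful.

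For surjectivity I would fill a given horn $A_\text{horn}\in K_{n,k}$. After extending its function part to a smooth map $\Delta^n\to M$ and fixing the local model as above, set $B_\text{horn}:=\kappa(A_\text{horn})$, a relative closed form on $\Delta^n_k$ with $p(B_\text{horn})=B_0$, and extend it to $B\in\Omega(\Delta^n,W)^{0,cl,B_0}$ (again by adding $B_0$ to a Moore filler of a $\tilde W$-valued closed horn). Solving \eqref{eq:a} as in Proposition \ref{prop:K-inv} produces $a$ on a star-shaped open $N'$ with $\Delta^n_k\subset N'\subset\Delta^n$, and $A':=B+i_Ea\in\Omega(N',W|_U)^{MC}$ restricts to $A_\text{horn}$ on the horn and satisfies $p(A')=A_0$. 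Transporting $A'$ back through $t$ yields a genuine solution $\hat A'\in\Omega(N',V)^{MC}$. Finally I would cut off exactly as in Theorem \ref{thm:kbig-loc}: choosing $g\colon\Delta^n\to N'$, $x\mapsto\phi(x)x$, equal to the identity near $\Delta^n_k$ and sending the rest into $N'$, the pullback $A:=g^*\hat A'$ lies in $\Omega(\Delta^n,V)^{MC}=K_n$ and restricts to $A_\text{horn}$.

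The step I expect to require the most care is the interaction between the relative constraint $p=A_0$ and this final cut-off: pulling back by the contraction $g$ does \emph{not} preserve the constraint in the $W$-model (since $g^*A_0\neq A_0$), so the extension from $N'$ to $\Delta^n$ must be performed on the honest $V$-solution $\hat A'$, where pullback along $g$ is merely precomposition of dg-morphisms and automatically lands in $\Omega(\Delta^n,V)^{MC}$, rather than on the constrained representative. The other point to verify carefully is the compatibility of $\kappa$ with the simplicial faces of the horn, which is what lets the relative Moore filler live over the correct base $B_0$; this is ensured by contracting towards the vertex $0$ shared by all faces of $\Delta^n_k$.
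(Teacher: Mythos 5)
Your proposal is correct and follows essentially the same route as the paper's proof: localization in $M$ via the relative model $W=T[1]\R^n\oplus\tilde W$ with the constraint $p(A)=A_0$, the commutative $\kappa$-square (with the $k$-th vertex at the origin) giving submersivity, and horn-filling by extending the closed form, invoking Proposition \ref{prop:K-inv}, and cutting off with $x\mapsto\phi(x)x$. Even the subtle point you flag is handled equivalently in the paper: it performs the cut-off in the $W$-model (where the constraint $p=A_0$ is indeed broken) and then transports via $t$ and projects to $V$, which discards the offending $T[1]\R^n$-component --- the same element you obtain by projecting first and cutting off on the $V$-side.
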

\begin{proof}
Let us use the notation $K_\bullet:=\Omega(\Delta^\bullet,V)^{MC}$.
Let us first prove that the map $K_n\to K_{n,k}$ is surjective. Any element of $K_{n,k}$ gives us, in particular, a map $f_\text{horn}:\Lambda^n_k\to M$. Let us extend $f_\text{horn}$ to a map $f:\Delta^n\to M$. As in the proof of Theorem \ref{thm:submfd}, let $U'\subset\R^n\times M$ be an open subset such that $U'$ contains the graph of $f$, and such that
\begin{enumerate}
\item $U'$ is diffeomorphic over $\R^n$ to an open subset $U'$ of $\R^n\times\R^m$ ($m=\dim M$)
\item the graded vector bundle $p_M^*V|_{U'}\to U'$ is trivial.
\end{enumerate}
This ensures the existence of an isomorphism 
$$t:(T[1]\R^n\times V)|_{U'}\cong(T[1]\R^n\times \tilde W)|_{U}$$
for some non-positively graded vector space $\tilde W$ with $\tilde W^0=\R^m$ and some open subset $U\subset W^0$, where $W:=T[1]\R^n\oplus\tilde W$.

Let us choose an element  $A_\text{horn}\in K_{n,k}$, i.e.\footnote{By a differential form on the horn $\Lambda^n_k$ we mean a differential form on each of its faces which agree on the overlaps.}
$$A_\text{horn}\in\Omega\bigl(\Lambda^n_k,W|_U)^{MC,A_0}.$$
After we apply the map $\kappa$ to $A_\text{horn}$, we get a closed form
$$B_\text{horn}\in\Omega(\Lambda^n_k,W)^{0,cl}.$$
and extend $B_\text{horn}$ to a closed form
$$B\in\Omega\bigl(\Delta^n,W\bigr)^{0,cl}.$$

By Proposition \ref{prop:K-inv}, Equation \eqref{eq:a} has a solution $a$ on an open subset  $N'\subset\Delta^n$ such that $\Lambda^n_k\subset N'$. The form
$$A'=B+i_Ea$$
thus satisfies
$$A'\in \Omega\bigl(N',W|_{U}\bigr)^{MC},\  
A'|_{\Lambda^n_k}=A_\text{horn}.$$

Let now $\phi\in C^\infty(\Delta^n)$ satisfy $0\leq\phi\leq1$, $\phi|_{\Lambda^n_k}=1$, and $\phi|_{\Delta^n\setminus N'}=0$.
Let $h:\Delta^n\to N'$ be given by $x\mapsto \phi(x)x$. Then
$$A:=h^*A'$$
satisfies
$$A\in \Omega\bigl(\Delta^n,W|_{U}\bigr)^{MC},\  
A|_{\Lambda^n_k}=A_\text{horn}.$$
The isomorphism $t$ then allows us to see $A$ as an element of $\Omega\bigl(\Delta^n,(T[1]\R^n\times V)|_{U'}\bigr)^{MC}$. We  project it to get an element of $\Omega(\Delta^n,V)^{MC}=K_n$, which finally shows that the horn map $K_n\to K_{n,k}$ is surjective.

Let us now prove that $K_n\to K_{n,k}$ is a submersion. For an open $U'\subset\R^n\times M$ satisfying the conditions (1) and (2) above, let $K'_n:=\Omega(\Delta^n,V)^{MC}|_{U'}\subset\Omega(\Delta^n,V)^{MC}=K_n$ be the subset of those elements for which the graph of their function part lies in $U'$. The isomorphism of graded vector bundles gives us a diffeomorphism
$$\on{id}_{\Omega(\Delta^n)}\otimes t:K_n'=\Omega(\Delta^n,V)^{MC}|_{U'}\cong\Omega(\Delta^n,W|_U)^{MC,A_0}$$
and so we have a commutative square
$$
\begin{tikzcd}
K'_{n}\arrow{rr}{\kappa\circ(\on{id}_{\Omega(\Delta^n)}\otimes t)}\arrow{d} && \Omega(\Delta^n,W)^{0,cl,B_0}\arrow{d}\\
K'_{n,k}\arrow{rr}{\kappa\circ(\on{id}_{\Omega(\Delta^n)}\otimes t)} && \Omega(\Lambda^n_k,W)^{0,cl,B_0}
\end{tikzcd}
$$
where the horizontal arrows are open embeddings and the right vertical arrow is a submersion, hence also the left vertical arrow is a submersion. Thus $K_n\to K_{n,k}$ is a submersion.
\end{proof}

\section{Gauge fixing}\label{sec:gauge}
The simplicial manifold $K^\text{big}_\bullet$ is infinite-dimensional. We follow Getzler's \cite{Getzler} approach and define a finite-dimensional simplicial submanifold $K^s_\bullet\subset K^\text{big}_\bullet$. In the case of a Lie algebra (or algebroid) $K^s_\bullet$ is the nerve of the corresponding local Lie group (or groupoid). In general $K^s_\bullet$ is a local Lie $\ell$-groupoid (Definition \ref{def:l-groupoid}). $K^s_\bullet$ depends on a choice of a gauge condition; different gauges lead to isomorphic (by non-canonical isomorphisms) local (weak) Lie $\ell$-groupoids. This construction is local in nature. In particular, we will not need the results of Sections  \ref{sec:fh} and \ref{sec:glob}.

Let us fix an integer $r\geq 1$.
Consider the cochain complex of elementary Whitney forms in $\Omega_{r+}(\Delta^n)$ on the geometric $n-$simplex 
$$E(\Delta^n)=\bigoplus_{k\geq 0}E^k(\Delta^n)\subset\Omega_{r+}(\Delta^n).$$ 
It is given in each degree by the linear span of differential forms defined using standard coordinates $(t_0,\ldots,t_n, \sum_k t_k=1)$ on $\Delta^n$ by the formulas
$$\omega_{i_0\ldots i_k}=k!\sum_{j=0}^k(-1)^jt_{i_j}dt_{i_0}\ldots \widehat{dt_{i_j}}\ldots dt_{i_k}$$
where $\widehat{dt_{i_j}}$ means the omission of $dt_{i_j}$. The $k!$ factor is prescribed so that the integral over the $k-$dimensional sub-simplex given by the sequence of $k+1$ vertices $(e_{i_0},\ldots,e_{i_k})$ of $\Delta^n$ is
$$\int_{(e_{i_0},\ldots,e_{i_k})}\omega_{i_0\ldots i_k}=1$$
(the integral of $\omega_{i_0\ldots i_k}$ over any other sub-simplex vanishes).

$E(\Delta^n)$ is a sub-complex of $\Omega_{r+}(\Delta^n)$, as
$$d\,{\omega_{i_0\ldots i_k}}=\sum_{i=0}^n\omega_{ii_0\ldots i_k}.$$
There is an explicit projection $p_\bullet$ given by Whitney \cite{Whitney}
$$p_n:\Omega_{r+}(\Delta^n)\to E(\Delta^n)$$
$$p_n\alpha=\sum_{k=0}^n\sum_{i_0<\cdots<i_k}\omega_{i_0\ldots i_k}\int_{(e_{i_0},\ldots,e_{i_k})}\alpha$$
compatible with the simplicial (cochain complex) structures of $\Omega_{r+}(\Delta^\bullet)$ and $E(\Delta^\bullet)$. $E(\Delta^\bullet)$ is furthermore isomorphic (with the isomorphism respecting the simplicial structures) to the complex of simplicial cochains on $\Delta^\bullet$.

Let us recall that a \emph{symmetric simplicial object} is a contravariant functor from the category of the finite sets of the form $\{0,1,\dots,n\}$ with all maps (i.e.\ not only non-decreasing maps) as morphisms. Observe that $K^\text{big}_\bullet$, $\Omega_{r+}(\Delta^\bullet)$, $E(\Delta^\bullet)$ are naturally symmetric simplicial objects and that $p_\bullet$ is a symmetric simplicial map.
Following Getzler \cite{Getzler} let us make the following Definition. 
\begin{defn}
A \emph{gauge} on $\Omega_{r+}(\Delta^\bullet)$  is a  continuous symmetric simplicial linear map 
$$s_\bullet:\Omega_{r+}(\Delta^\bullet)\to \Omega_{r+}(\Delta^\bullet)$$
of degree $-1$ satisfying
$$id_{\Omega_{r+}(\Delta^\bullet)}-p_\bullet=[d,s_\bullet]$$
and
$${s_\bullet}^2=0,\ p_\bullet s_\bullet=s_\bullet p_\bullet=0.$$
\end{defn} 
  Restricting to $\ker s_\bullet$ can be thought of as gauge fixing. An important example of a  a gauge is given by Dupont \cite{Dupont} in the proof of the simplicial de Rham theorem.  Its construction is as follows. Let us denote $h_{i}$ the de Rham homotopy operator associated to the retraction of $\Delta^\bullet$ to the $i$-th vertex. Then the operators
$$s_n=\sum_{k=0}^{n-1}\sum_{i_0<\ldots< i_k}\omega_{i_0\ldots i_k}h_{i_k}\ldots h_{i_0}$$
where $n\geq 0$ form a gauge $s_\bullet$ (it is $\Vert\cdot\Vert_{r+}$-continuous for every $r$).

\begin{notation}
We shall set 
$$\Omega_{r+}(\Delta^\bullet)^s:=\ker s_\bullet\subset \Omega_{r+}(\Delta^\bullet),$$
 $$\Omega_{r+}(\Delta^\bullet)^p:=\ker p_\bullet\subset \Omega_{r+}(\Delta^\bullet),$$
 $$\Omega_{r+}(\Delta^\bullet)^{s,p}:=\ker s_\bullet\cap\ker p_\bullet\subset \Omega_{r+}(\Delta^\bullet).$$
More generally, if $W$ is a graded vector space, we let
$$\Omega_{r+}(\Delta^\bullet,W)^s:=\ker (s_\bullet\otimes\on{id}_W)\subset \Omega_{r+}(\Delta^\bullet,W)$$
etc.
\end{notation}

The meaning of a gauge is summed-up by the following proposition.
\begin{prop}\label{prop:gauge}
A gauge is equivalent to a choice of a closed symmetric simplicial subspace
$$\mathscr M(\Delta^\bullet)\subset\Omega_{r+}(\Delta^\bullet)^p$$
such that
$$d:\mathscr M(\Delta^\bullet)\to\Omega_r(\Delta^\bullet)^{p,cl}$$
is a bijection, i.e.\ such that $\Omega_{r+}(\Delta^\bullet)^{p}=\Omega_r(\Delta^\bullet)^{p,cl}\oplus\mathscr M(\Delta^\bullet)$. If $s_\bullet$ is given then 
$$\mathscr M(\Delta^\bullet)= \Omega_{r+}(\Delta^\bullet)^{s,p}.$$ 
If $\mathscr M(\Delta^\bullet)$ is given then $s_\bullet$ is
\begin{equation}\label{slpgauge}
\begin{tikzpicture}[baseline]
\matrix(m)[matrix of math nodes,
row sep=3em, column sep=0.4em,
text height=1.5ex, text depth=0.25ex]
{0 & E(\Delta^\bullet) & \oplus & \mathscr M(\Delta^\bullet) & \oplus & \Omega_r(\Delta^\bullet)^{p,cl}. \\ };
\path[->,font=\scriptsize]
(m-1-2) edge node[above] {$s_\bullet$} (m-1-1)
(m-1-4) edge [bend left] node[auto] {$s_\bullet$} (m-1-1)
(m-1-6) edge [bend right] node[above] {$s_\bullet=d^{-1}$} (m-1-4);
\end{tikzpicture}
\end{equation}
\end{prop}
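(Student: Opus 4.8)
The plan is to exhibit two mutually inverse constructions: one sending a gauge $s_\bullet$ to the subspace $\mathscr M(\Delta^\bullet):=\Omega_{r+}(\Delta^\bullet)^{s,p}=\ker s_\bullet\cap\ker p_\bullet$, and one sending such a subspace back to a map $s_\bullet$ defined by the diagram \eqref{slpgauge}. The single algebraic input throughout is that the defining relation of a gauge reads $\on{id}-p_\bullet=d s_\bullet+s_\bullet d$ (the graded commutator of the degree $+1$ operator $d$ with the degree $-1$ operator $s_\bullet$), together with $s_\bullet^2=0$ and $p_\bullet s_\bullet=s_\bullet p_\bullet=0$, and that $p_\bullet$ is a cochain map with image $E(\Delta^\bullet)$.

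First, given a gauge $s_\bullet$, set $\mathscr M:=\ker s_\bullet\cap\ker p_\bullet$. Since $p_\bullet d=d p_\bullet$, for $\alpha\in\mathscr M$ we get $p_\bullet(d\alpha)=d(p_\bullet\alpha)=0$ while $d\alpha$ is closed, so $d$ maps $\mathscr M$ into $\Omega_r(\Delta^\bullet)^{p,cl}$. Bijectivity is read off the homotopy formula: on $\mathscr M$ one has $\alpha=d s_\bullet\alpha+s_\bullet d\alpha=s_\bullet d\alpha$, forcing injectivity; and for closed $\beta\in\Omega_r(\Delta^\bullet)^{p,cl}$ the element $s_\bullet\beta$ lies in $\mathscr M$ (because $p_\bullet s_\bullet=s_\bullet^2=0$) and satisfies $d s_\bullet\beta=\beta$, giving surjectivity. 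The same formula yields the splitting $\Omega_{r+}(\Delta^\bullet)^p=\Omega_r(\Delta^\bullet)^{p,cl}\oplus\mathscr M$ via $\gamma\mapsto(\gamma-s_\bullet d\gamma)+s_\bullet d\gamma$, the first summand being closed (since $d\gamma=d s_\bullet d\gamma$ on $\ker p_\bullet$) and the second lying in $\mathscr M$. Closedness, simpliciality and $S_{\bullet+1}$-invariance of $\mathscr M$ are inherited from the corresponding properties of $s_\bullet$ and $p_\bullet$.

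Conversely, given $\mathscr M$ I would first record the triple splitting $\Omega_{r+}(\Delta^\bullet)=E(\Delta^\bullet)\oplus\mathscr M\oplus\Omega_r(\Delta^\bullet)^{p,cl}$ (combining the Whitney splitting $\Omega_{r+}=E\oplus\ker p_\bullet$ with the hypothesis), and define $s_\bullet$ to be zero on $E(\Delta^\bullet)\oplus\mathscr M$ and $d^{-1}$ on $\Omega_r(\Delta^\bullet)^{p,cl}$ as in \eqref{slpgauge}. All gauge axioms are then checked summand by summand: $\on{id}-p_\bullet=d s_\bullet+s_\bullet d$ holds on $E$ (both sides vanish, using that $E$ is a subcomplex), on $\mathscr M$ (where $s_\bullet d=d^{-1}d=\on{id}$ and $d s_\bullet=0$), and on the closed part (where $d s_\bullet=d d^{-1}=\on{id}$ and $s_\bullet d=0$); the relations $s_\bullet^2=0$ and $p_\bullet s_\bullet=s_\bullet p_\bullet=0$ follow because $\on{im}s_\bullet\subset\mathscr M\subset\ker p_\bullet$ while $s_\bullet$ vanishes on both $\mathscr M$ and $E=\on{im}p_\bullet$. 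Simpliciality and $S_{\bullet+1}$-invariance pass through because all three summands and $d$ are simplicial and equivariant.

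Finally the two constructions are mutually inverse by the uniqueness built into the splitting: starting from $\mathscr M$ the constructed $s_\bullet$ has $\ker s_\bullet=E\oplus\mathscr M$ (as $d^{-1}$ is injective on the closed part), so $\ker s_\bullet\cap\ker p_\bullet=\mathscr M$; starting from a gauge $s_\bullet$, the identities $s_\bullet|_E=(s_\bullet p_\bullet)|_E=0$, $s_\bullet|_{\mathscr M}=0$, and $s_\bullet\beta=d^{-1}\beta$ on closed $\beta$ (shown in the second paragraph) identify $s_\bullet$ with the map built from $\mathscr M$. The only genuinely non-formal point, and the one I expect to be the main obstacle, is \emph{continuity} of the constructed $s_\bullet$, where the Banach structure is essential. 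Because $\mathscr M$ is closed and $d:\mathscr M\to\Omega_r(\Delta^\bullet)^{p,cl}$ is a continuous bijection of Banach spaces, the open mapping theorem gives a bounded inverse $d^{-1}$; and since $\ker p_\bullet=\Omega_r(\Delta^\bullet)^{p,cl}\oplus\mathscr M$ is an algebraic direct sum of two closed subspaces of a Banach space, the closed graph theorem makes the associated projections bounded. Composing these bounded maps with the continuous Whitney projection $p_\bullet$ shows $s_\bullet$ is continuous, which is exactly where the ``closed'' hypothesis on $\mathscr M$ enters.
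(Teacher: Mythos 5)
Your proof is correct and takes essentially the same route as the paper's, which merely asserts both directions (the paper's entire proof consists of noting that the map defined by \eqref{slpgauge} ``clearly'' satisfies the gauge axioms, and that conversely one ``easily sees'' that $d:\Omega_{r+}(\Delta^\bullet)^{s,p}\to\Omega_r(\Delta^\bullet)^{p,cl}$ is an isomorphism and the two constructions are mutually inverse); you supply exactly the computations with the homotopy identity $\on{id}-p_\bullet=[d,s_\bullet]$ that make those assertions precise. Your final paragraph on continuity of the constructed $s_\bullet$ via the open mapping and closed graph theorems fills in a point the paper leaves entirely implicit, and correctly identifies where the closedness hypothesis on $\mathscr M(\Delta^\bullet)$ is used.
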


\begin{proof}
The map $s_\bullet$ defined  by \eqref{slpgauge} clearly satisfies 
$$s_\bullet p_\bullet=0,\ p_\bullet s_\bullet=0,\ \on{id}_{\Omega_{r+}(\Delta^\bullet)}-p_\bullet=[d,s_\bullet],\ s_\bullet^2=0$$
so it is a gauge, and $\mathscr M(\Delta^\bullet)=\Omega_{r+}(\Delta^\bullet)^{s,p}$.

Conversely given a gauge $s_\bullet$ we set $\mathscr M(\Delta^\bullet)=\Omega_{r+}(\Delta^\bullet)^{s,p}$ and we easily see that 
 $d:\mathscr M(\Delta^\bullet)\to\Omega_{r}(\Delta^\bullet)^{p,cl}$ is an isomorphism, and that the gauge defined by \eqref{slpgauge} coincides with the original $s_\bullet$.
\end{proof}

We can use $s_\bullet$ to specify the closed graded  subspace $C\subset\Omega_{r+}(\Delta^\bullet)$ of Proposition \ref{prop:locdiff}.
 Let 
 $$D(\Delta^\bullet):=h_0\bigl(E(\Delta^\bullet)\bigr)\subset E(\Delta^\bullet),$$
 (where $h_0$ is the de Rham homotopy operator given by the contraction to the vertex 0), so that  $E(\Delta^\bullet)=E(\Delta^\bullet)^{cl}\oplus D(\Delta^\bullet).$ 
We set
$C=D(\Delta^\bullet)\oplus\mathscr M(\Delta^\bullet)$
and consider the projection
$$\pi_\bullet:\Omega_{r+}(\Delta^\bullet)\to\Omega_r(\Delta^\bullet)^{cl}$$
w.r.t.\ $C$.

Notice that $D(\Delta^\bullet)\subset E(\Delta^\bullet)$ is not a simplicial subspace, but it is compatible with the maps between simplices that preserve the vertex $0$. Likewise, $\pi_\bullet$ is not a simplicial map, but it is (symmetric) 0-simplicial in the following sense:
\begin{defn}
If $X_\bullet$ and $Y_\bullet$ are simplicial sets then a sequence of maps $f_\bullet:X_\bullet\to Y_\bullet$ is a \emph{0-simplicial map} if it is functorial under the order-preserving maps $\{0,1,\dots,n\}\to\{0,1,\dots,m\}$ sending $0$ to $0$. If $X_\bullet$ and $Y_\bullet$ are symmetric simplicial then $f_\bullet$ is \emph{symmetric 0-simplicial} provided it is functorial wrt.\ all 0-preserving maps.
\end{defn}

As before let $W$ be a finite-dimensional non-positively graded vector space,  $U\subset W^0$ be an open subset, and let $Q$ be a differential on the algebra $\mathcal{A}_{W|_U}$ and $\kappa$ be the corresponding Kuranishi map given by (\ref{eq:kappadef}). Furthermore let $\mathcal{U}_\bullet$ be the open neighborhood $U\subset\mathcal{U}_\bullet\subset\Omega_r(\Delta^\bullet,W|_U)^{MC}$ from Proposition \ref{prop:locdiff}, where $ C=D(\Delta^\bullet)\oplus\mathscr M(\Delta^\bullet)$.  We can demand $\mathcal{U}_\bullet$ to be a symmetric simplicial submanifold.

 We will denote the symmetric simplicial set of gauge-fixed dg-morphisms in $\mathcal{U}_\bullet$ by
$$K^s_\bullet(\mathcal{A}_{W|_U},Q):=\Omega_r(\Delta^\bullet,W|_U)^{MC,s}:=\mathcal{U}_\bullet\cap \Omega_{r+}(\Delta^\bullet,W)^{0,s}.$$ 
Its elements are (sufficiently small) forms $A\in\Omega_{r}(\Delta^\bullet,W|_U)^0$ satisfying the equations
$$dA=C_Q(A),\quad s_\bullet A=0.$$

Let us now recall the definition of higher Lie groupoids. As is usual we actually define higher groupoids as nerves. 

\begin{defn}(Getzler \cite{Getzler}, Henriques \cite{Henriques}, Zhu \cite{Zhu})\label{def:l-groupoid}
A Kan simplicial manifold $K_\bullet$ is  a  \emph{Lie $\ell$-groupoid} if the maps $K_n\to K_{n,k}$ are diffeomorphisms for all $n>\ell$ and all $0\leq k\leq n$.
A  simplicial manifold $K_\bullet$ is  a  \emph{local Lie $\ell$-groupoid} if the maps $K_n\to K_{n,k}$ are submersions for all $n\geq 1$, $0\leq k\leq n$ and open embeddings for all $n>\ell$, $0\leq k\leq n$.
\end{defn}

\begin{thm}\label{thm:ksmall}
The simplicial set $K^s_\bullet(\mathcal{A}_{W|_U},Q)$ is a finite-dimensional  local Lie $\ell$-groupoid with $-\ell$ being the lowest degree of $W$.
\end{thm}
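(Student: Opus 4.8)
The plan is to use the projection $\pi_\bullet$ to identify $K^s_\bullet(\mathcal A_{W|_U},Q)$ locally with an open simplicial piece of a finite-dimensional simplicial vector space of closed Whitney forms, and then read off the horn maps there. First I would pin down the image of the gauge condition under $\pi_\bullet$. By Propositions \ref{prop:gauge} and \ref{prop:locdiff} we have the decomposition (all summands meaning their $W$-valued degree-$0$ parts)
$$\Omega_{r+}(\Delta^\bullet,W)^0=\bigl(E^{cl}\oplus\Omega_r^{p,cl}\bigr)\oplus\bigl(D\oplus\mathscr M\bigr),$$
in which $\pi_\bullet$ is the projection onto the closed part $E^{cl}\oplus\Omega_r^{p,cl}$ (kernel $C=D\oplus\mathscr M$), while the gauge $s_\bullet$ vanishes on $E^{cl}\oplus D\oplus\mathscr M$ and equals $d^{-1}$ on $\Omega_r^{p,cl}$, carrying it isomorphically onto $\mathscr M$. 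Reading off components, $s_\bullet A=0$ and $\pi_\bullet A\in E^{cl}(\Delta^\bullet,W)^0$ are the \emph{same} condition, namely vanishing of the $\Omega_r^{p,cl}$-component. Hence on $K^s_n=\mathcal U_n\cap\{s_nA=0\}$ the open embedding $\pi_n$ of Proposition \ref{prop:locdiff} restricts to a diffeomorphism
$$\pi_n\colon K^s_n\ \xrightarrow{\ \sim\ }\ \pi_n(\mathcal U_n)\cap E^{cl}(\Delta^n,W)^0,$$
an open subset of the finite-dimensional space $E^{cl}(\Delta^n,W)^0$; since the Whitney complex computes the (trivial) cohomology of the contractible $\Delta^n$ one finds $\dim E^{k,cl}(\Delta^n)=\binom nk$, which already yields finite-dimensionality.

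Next I would identify the horn maps. Set $X_\bullet:=E^{cl}(\Delta^\bullet,W)^0$, a finite-dimensional simplicial vector space, and $\mathcal V_\bullet:=\pi_\bullet(\mathcal U_\bullet)\cap X_\bullet$. Because $D(\Delta^\bullet)$ is compatible only with vertex-$0$-preserving operators, $\pi_\bullet$ is $0$-simplicial, hence commutes with the faces $d_1,\dots,d_n$ as well as with all the face-of-face relations indexing the $0$-horn. Consequently, for $k=0$ the diffeomorphisms $\pi$ (on $K^s_n$ and on each face) intertwine the horn map $K^s_n\to K^s_{n,0}$ with the restriction to $\mathcal V_\bullet$ of the linear horn map $X_n\to X_{n,0}$; in particular $K^s_{n,0}\cong\mathcal V_{n,0}$ is an open subset of the vector space $X_{n,0}$, so the horns are manifolds.

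It then remains to analyse the horn maps of the simplicial vector space $X_\bullet$. As such, $X_n\to X_{n,0}$ is a surjective linear map (Moore's explicit section \eqref{eq:moore} fills horns, and after shrinking the simplicial neighbourhood $\mathcal U_\bullet$ the section keeps fillings inside $\mathcal V_\bullet$, so $\mathcal V_n\to\mathcal V_{n,0}$ stays onto); in particular it is a submersion for every $n\ge1$. Its kernel is a normalised Moore space, and the Dold--Kan identity $\dim X_n=\sum_{m=0}^n\binom nm\dim N_m$ compared with $\dim X_n=\sum_k\binom nk\dim W^{-k}$ forces $\dim N_m=\dim W^{-m}$. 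Thus the kernel vanishes precisely when $W^{-n}=0$, i.e.\ for $n>\ell$, so $X_n\to X_{n,0}$ is an isomorphism there; restricting to the open sets, $K^s_n\to K^s_{n,0}$ is a submersion for all $n\ge1$ and an open embedding for $n>\ell$. For a general $k$ I would invoke symmetry: since $s_\bullet$ and $\mathcal U_\bullet$ are $S_{n+1}$-invariant, the transposition $(0\,k)$ acts on $K^s_\bullet$ by a diffeomorphism sending the $k$-th horn map to the $0$-th one, transporting both properties and giving the local Lie $\ell$-groupoid conditions of Definition \ref{def:l-groupoid}.

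The main obstacle is the bookkeeping that makes the non-simplicial $\pi_\bullet$ usable: establishing the equivalence $s_\bullet A=0\Leftrightarrow\pi_\bullet A\in E^{cl}$ (so the image is genuinely the finite-dimensional Whitney space), and verifying that every relation occurring in the $0$-horn involves only vertex-$0$-preserving operators, so that $0$-simpliciality suffices to transport the horn map faithfully. The Dold--Kan count $\dim N_m=\dim W^{-m}$ is what produces the exact cutoff at $\ell$, and the $S_{n+1}$-symmetry is the device that removes the asymmetry built into $\pi_\bullet$; the analytic surjectivity of $\mathcal V_n\to\mathcal V_{n,k}$ is then routine given the explicit Moore section.
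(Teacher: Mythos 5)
Your proposal is correct and follows essentially the same route as the paper: both identify $K^s_\bullet$ with an open subset of $E(\Delta^\bullet,W)^{0,cl}$ via the $0$-simplicial open embedding $\pi^{res}_\bullet$ of Proposition \ref{prop:locdiff} (your four-fold decomposition $E^{cl}\oplus D\oplus\Omega_r^{p,cl}\oplus\mathscr M$ is exactly the content of the paper's identity $\Omega_r(\Delta^\bullet,W)^{0,cl}\cap\Omega_{r+}(\Delta^\bullet,W)^{0,s}=E(\Delta^\bullet,W)^{0,cl}$), treat the $0$-horns through $0$-simpliciality, and transport to the other horns by the $S_{n+1}$-action. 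The only difference is that you additionally prove, via Moore's section and the Dold--Kan dimension count, the fact that $E(\Delta^\bullet,W)^{0,cl}$ is a Lie $\ell$-groupoid, which the paper asserts without proof; this is a worthwhile detail but not a different method.
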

\begin{proof}
 Let
$$\pi_\bullet:\Omega_{r+}(\Delta^\bullet,W)^0\to\Omega_r(\Delta^\bullet,W)^{0,cl}$$
be the projection w.r.t.\ $( C\otimes W)^0$ (where $C=D(\Delta^\bullet)\oplus\mathscr M(\Delta^\bullet)$)
and let
$$\pi^{res}_\bullet:\mathcal U_\bullet\to \Omega_r(\Delta^\bullet,W)^{0,cl}$$
be the restriction of $\pi_\bullet$ to $\mathcal U_\bullet$.
  By Proposition \ref{prop:locdiff}, $\pi^{res}_\bullet$ is an open embedding. Let us recall that $\pi^{res}_\bullet$ is  a 0-simplicial map.

Let $K^s_\bullet:=K^s_\bullet(\mathcal{A}_{W|_U},Q)$.
We have
\begin{multline*}
K^s_\bullet=\mathcal{U}_\bullet\cap\Omega_{r+}(\Delta^\bullet,W)^{0,s}\\
=(\pi^{res}_\bullet)^{-1}\bigl( \Omega_r(\Delta^\bullet,W)^{0,cl}\cap\Omega_{r+}(\Delta^\bullet,W)^{0,s}\bigr)
=(\pi^{res}_\bullet)^{-1}\bigl(E(\Delta^\bullet, W)^{0,cl}\bigr)
\end{multline*}
as $\Omega_r(\Delta^\bullet,W)^{0,cl}\cap\Omega_{r+}(\Delta^\bullet,W)^{0,s}=E(\Delta^\bullet,W)^{0,cl}$.
This implies that each degree of the simplicial set $K^s_\bullet$ is a finite-dimensional smooth manifold and that 
\begin{equation}\label{eq:oembKs}
\pi^{res}_\bullet:K^{s}_\bullet\to E(\Delta^\bullet, W)^{0,cl}
\end{equation}
is an open embedding. 

It remains to prove that it is a local Lie $\ell$-groupoid. Since $E(\Delta^\bullet,W)^{0,cl}$ is a Lie $\ell$-groupoid and $\pi^{res}_\bullet$ a 0-simplicial map, $K^s_\bullet$ satisfies the required conditions for the projections $K^s_n\to K^s_{n,0}$. As $K^s_\bullet$ is symmetric simplicial, the same is true for all horn maps $K^s_n\to K^s_{n,k}$.
\end{proof}

The simplicial manifold $K^s_\bullet(\mathcal A_{W|_U},Q)=\Omega_r(\Delta^\bullet, W|_U)^{MC,s}$ was constructed using infinite-dimensional techniques. We can now see it as a simplicial submanifold of a finite-dimensional simplicial vector space:
\begin{thm}
The projection 
$$E(\Delta^\bullet, W)^0\oplus \mathscr M(\Delta^\bullet,W)^0\to E(\Delta^\bullet, W)^0$$
restricts to an embedding of symmetric simplicial manifolds
$$K^s_\bullet(\mathcal A_{W|_U},Q)\to E(\Delta^\bullet, W)^0.$$
\end{thm}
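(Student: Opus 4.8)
The plan is to reduce everything to the open embedding $\pi^{res}_\bullet$ already produced in the proof of Theorem \ref{thm:ksmall}, by exhibiting the projection in the statement as the first factor of $\pi^{res}_\bullet$. First I would pin down the ambient space of $K^s_\bullet$. By the description \eqref{slpgauge} of a gauge, $s_\bullet$ annihilates $E(\Delta^\bullet)$ and $\mathscr M(\Delta^\bullet)$ and maps $\Omega_r(\Delta^\bullet)^{p,cl}$ isomorphically onto $\mathscr M(\Delta^\bullet)$, so $\ker s_\bullet=E(\Delta^\bullet)\oplus\mathscr M(\Delta^\bullet)$; in total degree $0$ this gives $\Omega_{r+}(\Delta^\bullet,W)^{0,s}=E(\Delta^\bullet,W)^0\oplus\mathscr M(\Delta^\bullet,W)^0$. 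Since $K^s_\bullet=\mathcal U_\bullet\cap\Omega_{r+}(\Delta^\bullet,W)^{0,s}$, the simplicial manifold $K^s_\bullet$ (which is finite-dimensional and smooth at each level by Theorem \ref{thm:ksmall}) indeed sits inside $E(\Delta^\bullet,W)^0\oplus\mathscr M(\Delta^\bullet,W)^0$, so the projection $P_1$ of the theorem restricts to it. Because $E(\Delta^\bullet)$ and $\mathscr M(\Delta^\bullet)$ are simplicial subspaces, $P_1$ is a simplicial linear map, and it suffices to prove that $P_1|_{K^s_\bullet}$ is an embedding at each simplicial level.

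Next I would factor $\pi^{res}_\bullet$ through $P_1$. Recall that $\pi_\bullet$ is the projection onto $\Omega_r(\Delta^\bullet,W)^{0,cl}$ along $C\otimes W$ with $C=D(\Delta^\bullet)\oplus\mathscr M(\Delta^\bullet)$, and that $E=E^{cl}\oplus D$. For $e+m$ with $e\in E$, $m\in\mathscr M$, writing $e=e^{cl}+d$ with $e^{cl}\in E^{cl}$, $d\in D$, the splitting along $\Omega_r^{cl}\oplus C$ reads $e+m=e^{cl}+(d+m)$, whence $\pi_\bullet(e+m)=e^{cl}$. This is exactly $P_2\circ P_1$, where $P_2\colon E(\Delta^\bullet,W)^0\to E(\Delta^\bullet,W)^{0,cl}$ is the further linear projection along $D(\Delta^\bullet,W)^0$. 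Thus on $K^s_\bullet$ we have the identity $\pi^{res}_\bullet=P_2\circ P_1$.

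Finally I would deduce the embedding property. Since $\pi^{res}_\bullet=P_2\circ P_1|_{K^s_\bullet}$ is an open embedding and $P_2$ is linear, $P_1|_{K^s_\bullet}$ is injective (equality of $P_1$-values forces equality of $\pi^{res}_\bullet$-values) and an immersion (since $P_2\circ TP_1|_{K^s_\bullet}=T\pi^{res}_\bullet$ is injective, so is $TP_1|_{K^s_\bullet}$). For the topological embedding property, the continuous map $(\pi^{res}_\bullet)^{-1}\circ P_2$ is a two-sided inverse of $P_1|_{K^s_\bullet}$ on its image: on $K^s_\bullet$ one has $(\pi^{res}_\bullet)^{-1}\circ P_2\circ P_1=\mathrm{id}$, and conversely $P_1\circ(\pi^{res}_\bullet)^{-1}\circ P_2$ fixes $P_1(K^s_\bullet)$ by the same identity. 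Hence $P_1|_{K^s_\bullet}$ is a homeomorphism onto its image, so an injective immersion that is a topological embedding, i.e.\ a smooth embedding; together with the simpliciality of $P_1$ this yields the claimed embedding of simplicial manifolds.

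I expect the one delicate point to be precisely this last deduction: a single factor of an embedding need not itself be an embedding in general, and the argument succeeds only because the complementary linear projection $P_2$, composed with $(\pi^{res}_\bullet)^{-1}$, furnishes an \emph{explicit} continuous inverse. Everything else is bookkeeping of the three-term splitting $E^{cl}\oplus D\oplus\mathscr M$ and the observation that $P_1$, unlike $\pi^{res}_\bullet$ and $P_2$, is genuinely simplicial because its kernel $\mathscr M(\Delta^\bullet)$ and image $E(\Delta^\bullet)$ are simplicial subspaces.
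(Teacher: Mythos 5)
Your proposal is correct and takes essentially the same approach as the paper: the paper's own proof consists of observing that the projection is simplicial and that it restricts to an embedding because the map $\pi^{res}_\bullet:K^s_\bullet\to E(\Delta^\bullet,W)^{0,cl}$ of \eqref{eq:oembKs} is an open embedding, which is precisely your factorization $\pi^{res}_\bullet=P_2\circ P_1$ used in the same direction. Your spelled-out details (the identification $\ker s_\bullet=E(\Delta^\bullet)\oplus\mathscr M(\Delta^\bullet)$, the continuous inverse $(\pi^{res}_\bullet)^{-1}\circ P_2$, and the standard fact that a first factor of an embedding is an embedding) are exactly what the paper leaves implicit.
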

\begin{proof}
The projection is a simplicial map. It restricts to an embedding since
\eqref{eq:oembKs}
 is an (open) embedding.
\end{proof}
We can thus identify $K^s_\bullet(\mathcal A_{W|_U},Q)$ with 
$$\{A\in E(\Delta^\bullet, W)^0; (\exists A'\in\mathscr M(\Delta^\bullet,W)^0)\ d(A+A')=C_Q(A+A')\}$$
intersected with a suitable open subset of $E(\Delta^\bullet, W)^0$.

\section{Deformation retraction}\label{sec:def-retr}

In this section we shall prove that  $K^s_\bullet$ and $K^{big}_\bullet$ are equivalent as local Lie $\infty$-groupoids, namely we shall construct a local simplicial deformation retraction of $K^{big}_\bullet$ onto $K^s_\bullet$.

If $A_0\in\Omega_r(\Delta^\bullet,W|_U)^{MC}$ and 
$G\in \Omega_r(\Delta^\bullet,W)^{0,cl,p}$
so that $d(s_\bullet G)=G$,
 and if the $C^0$-norm of $s_\bullet G$ is small enough, then Theorem \ref{thm:htopy2} gives us a form 
$$\tilde A\in\Omega_r(\Delta^\bullet\times I,W|_U)^{MC}$$
 such that $\tilde A|_0=A_0$ and $\tilde A_v:= i_{\partial_t} \tilde A=q^*s_\bullet G$,
where $q:\Delta^\bullet\times I\to \Delta^\bullet$ is the projection. It is given by Equation \eqref{eq:dotAh2} with $H=q^*s_\bullet G$, i.e.\ by
$$\frac{d}{dt}\tilde A_h=G+(s_\bullet G^i)\frac{\partial C_Q}{\partial \xi^i}(\tilde A_h).$$
Let us use the notation $\tilde A(A_0,G)$ for the form $\tilde A$ constructed in this way.

\begin{prop}
There is an open neighbourhood $\mathcal V_\bullet$ of $U\times\{0\}$ in 
$$\Omega_r(\Delta^\bullet, W|_U)^{MC,s}\times\Omega_r(\Delta^\bullet,W)^{0,cl,p}$$
 such that the map $(A_0,G)\mapsto \tilde A(A_0, G)|_{t=1}$ is an open embedding $\psi:\mathcal V_\bullet\to\Omega_r(\Delta^\bullet, W|_U)^{MC}$.
\end{prop}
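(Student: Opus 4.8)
The plan is to show that $\psi$ is smooth, restricts to the identity on $U\times\{0\}$, and is a local diffeomorphism at every point of $U\times\{0\}$; an open embedding on a suitable simplicial neighbourhood $\mathcal V_\bullet$ then follows by shrinking to secure injectivity. Smoothness is immediate: $\psi(A_0,G)=\tilde A(A_0,G)|_{t=1}$ is produced by solving the ODE \eqref{eq:dotAh2} with $H=s_\bullet G$ and restricting to $t=1$, so smooth ($C^r$) dependence of the flow on its initial datum $A_0$ and on the parameter $G$ gives smoothness of $\psi$ wherever Theorem \ref{thm:htopy2} produces a solution up to $t=1$, i.e.\ where the $C^0$-norm of $s_\bullet G$ is small; this is exactly why $\mathcal V_\bullet$ is taken to be a neighbourhood of $U\times\{0\}$. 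When $G=0$ the flow equation reads $\tfrac{d}{dt}\tilde A_h=0$, so $\tilde A_h\equiv A_0$ and $\psi(A_c,0)=A_c$, whence $\psi|_U=\on{id}_U$. Moreover $\psi$ is simplicial, since affine pullbacks commute with $d/dt$, with $F_Q$ (applied pointwise) and with $s_\bullet$ (which is simplicial), hence with the flow defining $\tilde A$.

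The main step is to compute $T_{(A_c,0)}\psi$ and prove it is a linear isomorphism. Linearising \eqref{eq:dotAh2} at the constant solution $A_c$ with $G=0$, one writes $\tilde A_h=A_c+\beta+O(\epsilon^2)$ and obtains $\dot\beta=\delta G+(s_\bullet\delta G^i)\tfrac{\partial F_Q}{\partial\xi^i}(A_c)$ with $\beta|_{t=0}=\delta A_0$; since the right-hand side is $t$-independent,
$$T_{(A_c,0)}\psi(\delta A_0,\delta G)=\delta A_0+\delta G+F_{Q,\text{lin}}(s_\bullet\delta G),$$
with $F_{Q,\text{lin}}$ the linearisation of $F_Q$ at $A_c$ as in Proposition \ref{prop:locdiff}. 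Because $\pi^{res}_\bullet$ is an open embedding on $\mathcal U_\bullet$ (Theorem \ref{thm:ksmall}), it suffices to show that $\Phi:=T\pi^{res}_\bullet\circ T_{(A_c,0)}\psi$ is an isomorphism. Identifying $T_{A_c}K^s_\bullet\cong E(\Delta^\bullet,W)^{0,cl}$ via $\pi$, writing $\bar A_0:=\pi(\delta A_0)$, and using $\pi\delta G=\delta G$ (as $\delta G$ is closed), we get
$$\Phi(\bar A_0,\delta G)=\bar A_0+\delta G+\pi\bigl(F_{Q,\text{lin}}(s_\bullet\delta G)\bigr).$$
Both domain and target decompose as $E(\Delta^\bullet,W)^{0,cl}\oplus\Omega_r(\Delta^\bullet,W)^{0,cl,p}$, and along this decomposition $\Phi$ is the identity plus the cross term $\pi F_{Q,\text{lin}}s_\bullet$.

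To control the cross term I introduce, as in Proposition \ref{prop:locdiff}, the filtration $\mathcal F^i=\bigoplus_{k\le i}\Omega_{r+}^k(\Delta^\bullet,W^{-k})$. Since $s_\bullet$ lowers form degree by one while $F_{Q,\text{lin}}$ raises $W$-degree by one, and $\pi$ preserves both form and $W$-degree, the composite $\pi F_{Q,\text{lin}}s_\bullet$ maps $\mathcal F^i\to\mathcal F^{i-1}$, exactly as $hF_{Q,\text{lin}}$ does in the proof of Proposition \ref{prop:locdiff}. Hence $\Phi$ is a filtered endomorphism inducing the identity on the associated graded, and as the filtration is finite (and exhaustive), $\Phi$—and therefore $T_{(A_c,0)}\psi$—is a linear isomorphism. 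The inverse function theorem for Banach manifolds then makes $\psi$ a local diffeomorphism at every point of $U\times\{0\}$.

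Finally, since $\psi$ is a local diffeomorphism along $U\times\{0\}$ and restricts there to the injective identity, it is injective on some open neighbourhood of $U\times\{0\}$; intersecting this with a simplicial neighbourhood on which the $C^0$-bound of Theorem \ref{thm:htopy2} guarantees a solution up to $t=1$ yields the required $\mathcal V_\bullet$, on which $\psi$ is an open embedding. I expect the \emph{main obstacle} to be precisely the derivative computation together with the verification that the cross term $\pi F_{Q,\text{lin}}s_\bullet$ is filtration-lowering; a secondary technical point is assembling the per-degree injectivity neighbourhoods into a single simplicial $\mathcal V_\bullet$ compatible with all face and degeneracy maps.
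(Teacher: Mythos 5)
Your proof is correct and follows essentially the same route as the paper: reduce to invertibility of the tangent map at the constant simplices, compute $\psi_{lin}(\delta A_0,\delta G)=\delta A_0+\delta G+F_{Q,\text{lin}}(s_\bullet\delta G)$, and test invertibility after composing with the projection $\pi_\bullet$ used to identify the tangent spaces. The only divergence is the final step: the paper notes that $s_\bullet\delta G\in\ker s_\bullet\cap\ker p_\bullet=\mathscr M(\Delta^\bullet)$, and since $F_{Q,\text{lin}}$ at a constant $A_c$ acts by constant coefficients, the cross term lies in $\mathscr M(\Delta^\bullet,W)^0\subset\ker\pi_\bullet$ and is annihilated outright, so the composite is exactly the canonical isomorphism $E(\Delta^\bullet,W)^{0,cl}\oplus\Omega_r(\Delta^\bullet,W)^{0,cl,p}\to\Omega_r(\Delta^\bullet,W)^{0,cl}$; your filtration/associated-graded argument is valid but is doing extra work to control a term that is in fact zero after projection.
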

\begin{proof}
Since $\psi$ restricts to the identity on $U\times\{0\}$, it's enough to show that for each $A_c\in U$ the tangent map $\psi_{lin}:=T_{(A_c,0)}\psi$ is invertible. We have
$$\psi_{lin}(A_0,G)= A_0+G+(s_\bullet G^i)\frac{\partial C_Q}{\partial \xi^i}(A_c)$$
(for $A_0$ tangent to $\Omega_r(\Delta^\bullet, W|_U)^{MC,s}$ at $A_c$ and $G\in\Omega_r(\Delta^\bullet,W)^{0,cl,p}$). 

Let us recall that the projection $\pi_\bullet$ w.r.t.\ $(C\otimes W)^0$ (where $C=D(\Delta^\bullet)\oplus\mathscr M(\Delta^\bullet)$) gives us  isomorphisms 
$$T_{A_c}\Omega_r(\Delta^\bullet, W|_U)^{MC,s}\cong E(\Delta^\bullet,W)^{0,cl}$$ and $$T_{A_c}\Omega_r(\Delta^\bullet, W|_U)^{MC}\cong\Omega_r(\Delta^\bullet, W)^{0,cl}=E(\Delta^\bullet,W)^{0,cl}\oplus\Omega_r(\Delta^\bullet, W)^{0,cl,p}.$$
Since the term $(s_\bullet G^i)\frac{\partial C_Q}{\partial \xi^i}(A_c)\in\mathscr M(\Delta^\bullet, W)^{0}$ is removed by this projection, we see that $\psi_{lin}$ is indeed an isomorphism.
\end{proof}

\begin{thm}
There is an open neighbourhood $\mathcal W_\bullet$ of $U$ in 
$$\Omega_r(\Delta^\bullet, W|_U)^{MC}$$
which is a simplicial submanifold, and a (smooth) simplicial map
$$\Psi^s_\bullet:\mathcal W_\bullet\to\Omega_r(\Delta^\bullet\times I, W|_U)^{MC}$$
with these properties:
\begin{enumerate}
\item $\Psi^s_\bullet(A)|_{t=1}=A$ (for every $A\in\mathcal W_\bullet$)
\item $\Psi^s_\bullet(A)|_{t=0}\in \Omega_r(\Delta^\bullet, W|_U)^{MC,s}$
\item if $A\in\Omega_r(\Delta^\bullet, W|_U)^{MC,s}$ then $\Psi^s_\bullet(A)=q^*A$ where $q:\Delta^\bullet\times I\to\Delta^\bullet$ is the projection.
\end{enumerate}
\end{thm}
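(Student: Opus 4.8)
The plan is to build the retraction directly from the homotopy $\tilde A$ and the open embedding $\psi$ furnished by the preceding proposition. Concretely, set $\mathcal W_\bullet:=\psi(\mathcal V_\bullet)$ and define
$$\Psi^s_\bullet:=\tilde A\circ\psi^{-1},$$
where $\tilde A$ is viewed as the map $\mathcal V_\bullet\to\Omega_r(\Delta^\bullet\times I,W|_U)^{MC}$, $(A_0,G)\mapsto\tilde A(A_0,G)$. Since $\psi$ restricts to the identity on $U\times\{0\}$ and is an open embedding, $\mathcal W_\bullet$ is an open neighbourhood of $U$. Each of the three required properties is then essentially built into the construction of $\tilde A$: property (1) is exactly $\tilde A(A_0,G)|_{t=1}=\psi(A_0,G)$; property (2) holds because $\tilde A(A_0,G)|_{t=0}=A_0\in\Omega_r(\Delta^\bullet,W|_U)^{MC,s}$; and for property (3) I observe that when $A$ is already gauge-fixed the choice $G=0$ gives $\tilde A_v=q^*s_\bullet 0=0$, so the ODE $\frac{d}{dt}\tilde A_h=0$ forces $\tilde A=q^*A$ and hence $\psi(A,0)=A$. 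Injectivity of $\psi$ then yields $\psi^{-1}(A)=(A,0)$, so $\Psi^s_\bullet(A)=\tilde A(A,0)=q^*A$.

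The real content is that $\Psi^s_\bullet$ is a \emph{simplicial} map, and the key step is to show that $\tilde A$ is simplicial; I would prove this by uniqueness for the defining equation \eqref{eq:dotAh2}. Let $\alpha\colon[m]\to[n]$ be a simplicial operator, realised as an affine map $\Delta^m\to\Delta^n$ acting on $\Delta^\bullet\times I$ as $\alpha\times\on{id}_I$. Both $\alpha^*\tilde A(A_0,G)$ and $\tilde A(\alpha^*A_0,\alpha^*G)$ are Maurer--Cartan forms on $\Delta^m\times I$ with the same initial value $\alpha^*A_0$ at $t=0$, and they satisfy the same ODE, because $\alpha^*$ commutes with $\partial_t$ and with $d_h$, acts pointwise (through the $W$-values) on $F_Q$, and commutes with the gauge, $\alpha^*s_n=s_m\alpha^*$, so that $\alpha^*(q^*s_nG)=q^*s_m(\alpha^*G)$. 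By uniqueness of solutions they coincide, whence $\tilde A$, and therefore $\psi$ and its inverse, are simplicial; composing gives that $\Psi^s_\bullet$ is simplicial. (Equivalently, one checks that $\alpha^*\Psi^s_n(A)$ satisfies the same three conditions that characterise $\Psi^s_m(\alpha^*A)$ and invokes the injectivity of $\psi$.)

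The one point requiring care — and the \emph{main obstacle} — is that this uniqueness argument only applies once the levelwise neighbourhood $\mathcal V_\bullet$ has been replaced by a genuinely simplicial one, so that $\alpha^*A$ again lies in $\mathcal W_m$. This is arranged exactly as for the neighbourhood $\mathcal U_\bullet$ of Proposition \ref{prop:locdiff}. The obstruction to extending $\tilde A$ up to $t=1$ is controlled, by Theorem \ref{thm:htopy2}, by the $C^0$-norm of the $0$-form part of $s_\bullet G$, and this norm is non-increasing under every $\alpha^*$, since each $\alpha$ is affine onto a subsimplex and sup-norms of $0$-forms can only decrease. For the finitely many face operators into each lower degree one intersects $\mathcal V_\bullet$ with the corresponding preimages of the embedding domain; the degeneracies, which raise the degree, are accommodated automatically because they preserve this controlling $C^0$-norm, while the linearisation $\psi_{lin}$ is invertible at every constant $A_c\in U$ in every simplicial degree. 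One may thus shrink $\mathcal V_\bullet$ to a simplicial (and $S_{\bullet+1}$-invariant) open neighbourhood of $U\times\{0\}$ on which $\psi$ is still an open embedding, so that $\mathcal W_\bullet=\psi(\mathcal V_\bullet)$ is a simplicial open neighbourhood of $U$ and $\Psi^s_\bullet$ is the desired simplicial deformation.
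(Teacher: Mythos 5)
Your proposal is correct and is essentially the paper's own proof: the paper likewise sets $\mathcal W_\bullet:=\psi(\mathcal V_\bullet)$ (shrunk so as to be a simplicial submanifold) and defines $\Psi^s_\bullet(A):=\tilde A(\psi^{-1}(A))$, with the three listed properties being immediate from the construction of $\tilde A(A_0,G)$. Your extra material --- the ODE-uniqueness argument showing $\tilde A$, hence $\psi$ and $\Psi^s_\bullet$, commute with the simplicial operators, and the norm-monotonicity argument for shrinking $\mathcal V_\bullet$ to a simplicial neighbourhood --- simply fills in what the paper compresses into the parenthetical ``we might then have to decrease $\mathcal W_\bullet$ to ensure that it is a simplicial submanifold.''
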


\begin{figure}[h!tb]
\center
\begin{tikzpicture}
\filldraw[fill=blue!20]
node[anchor=west,left,pos=0]{$\Delta^n\times \{t=0\}$} (0,0) --  node[anchor=west,left,pos=0.5]{$\Delta^n\times I$} node[anchor=west,left,pos=1]{$\Delta^n\times \{t=1\}$} (0,3) -- (1,2.5) -- (1,-0.5) -- (0,0);
\filldraw[fill=blue!20]
(2,0) -- (2,3) -- (1,2.5) -- (1,-0.5) -- (2,0);
\filldraw[fill=blue!10]
(0,3) -- (2,3) -- (1,2.5);
\draw[dashed] (0,0) -- (2,0);
\draw[->] (2,-0.25) to[out=270,in=180] (3,-0.5) node[anchor=east,right]{$\Psi^s_n 
(A)|_{t=0}\in\Omega(\Delta^n,W)^{MC,s}$};
\draw[->] (1.5,1.5) to[out=270, in=180] (3,1) node[anchor=east,right]{$\Psi^s_n 
(A)\in\Omega(\Delta^n\times I,W)^{MC}$};
\draw[->] (1,2.75) to[out=90, in=180] (3,3.5) node[anchor=east,right]{$A=\Psi^s_n 
(A)|_{t=1}\in\Omega(\Delta^n,W)^{MC}$};
\end{tikzpicture}
\caption{Deformation retraction}
\label{fig:dretr}
\end{figure}
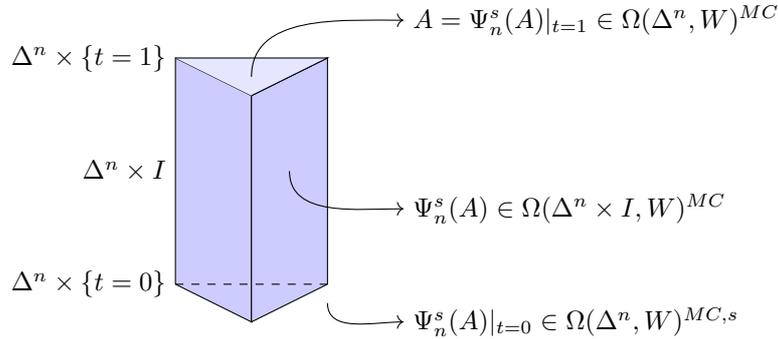

\begin{proof}
We set $\mathcal W_\bullet:=\psi(\mathcal V_\bullet)$ (we might then have to decrease $\mathcal W_\bullet$ to ensure that it is a simplicial submanifold), and $\Psi^s_\bullet(A):=\tilde A(\psi^{-1}(A))$.
\end{proof}

Let us recall that a simplicial homotopy is a simplicial map $X_\bullet\times {\mathrm I}_\bullet\to Y_\bullet$ where the simplicial set ${\mathrm I}_\bullet$ (the simplicial interval) is the set of non-decreasing maps $f:\{0,\dots,\bullet\}\to\{0,1\}.$ We can now formulate the main result of this section.

\begin{thm}
There is a local simplicial deformation retraction
$$R^s_\bullet:\Omega_r(\Delta^\bullet, W|_U)^{MC}\times {\mathrm I}_\bullet\to\Omega_r(\Delta^\bullet, W|_U)^{MC}$$
of $\Omega_r(\Delta^\bullet, W|_U)^{MC}$ to $\Omega_r(\Delta^\bullet, W|_U)^{MC,s}\subset\Omega_r(\Delta^\bullet, W|_U)^{MC}$,
where ``local'' means that it is defined on an open neighbourhood of $U$ in $\Omega_r(\Delta^\bullet, W|_U)^{MC}$.
\end{thm}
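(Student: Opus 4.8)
The plan is to upgrade the geometric homotopy $\Psi^s_\bullet$ produced by the previous theorem into a genuine \emph{simplicial} homotopy, exploiting that the $n$-simplices of the simplicial interval ${\mathrm I}_\bullet$ are exactly the non-decreasing functions $f\colon\{0,\dots,n\}\to\{0,1\}$, and that each such $f$ encodes a way of sending the vertices of $\Delta^n$ to the two ends of the cylinder $\Delta^n\times I$. Concretely, to each $f\in{\mathrm I}_n$ I would associate the affine map $\alpha_f\colon\Delta^n\to\Delta^n\times I$ determined on vertices by $\alpha_f(e_i)=(e_i,f(i))$, and then define $R^s_n(A,f):=\alpha_f^*\,\Psi^s_n(A)$. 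This is defined on the simplicial submanifold $\mathcal W_\bullet$ of the previous theorem, which is a neighbourhood of $U$; that restriction is precisely what the word \emph{local} refers to.

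First I would check that $R^s_n(A,f)$ lands in the target. Since $\Psi^s_n(A)\in\Omega_r(\Delta^n\times I,W|_U)^{MC}$ is a dg-morphism and pullback along any smooth (here affine) map commutes with $d$ and with the pointwise substitution defining $F_Q$, the form $\alpha_f^*\Psi^s_n(A)$ again solves the Maurer--Cartan equation, so $R^s_n(A,f)\in\Omega_r(\Delta^n,W|_U)^{MC}$. In particular no further constraint on the domain arises, and the whole of $\mathcal W_\bullet\times{\mathrm I}_\bullet$ is a legitimate source. Smoothness is immediate, as $\alpha_f^*$ is a continuous linear operator and ${\mathrm I}_n$ is a finite discrete set.

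The heart of the argument is simpliciality: that the $R^s_n$ assemble into a simplicial map $\mathcal W_\bullet\times{\mathrm I}_\bullet\to\Omega_r(\Delta^\bullet,W|_U)^{MC}$, where the source carries the diagonal simplicial structure. This reduces to a compatibility between the maps $\alpha_f$ and the coface/codegeneracy maps. For the $j$-th face one checks that the two affine maps $(\delta_j\times\mathrm{id}_I)\circ\alpha_{f\circ\partial_j}$ and $\alpha_f\circ\delta_j$ from $\Delta^{n-1}$ to $\Delta^n\times I$ agree on vertices, both sending $e_i$ to $(\delta_j(e_i),f(\partial_j(i)))$, hence coincide; an identical vertex computation handles degeneracies. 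Combining this with the simpliciality of $\Psi^s_\bullet$ (which intertwines pullback along $\delta_j$ with pullback along $\delta_j\times\mathrm{id}_I$, and similarly for degeneracies) yields $d_jR^s_n(A,f)=R^s_{n-1}(d_jA,d_jf)$ and the analogous degeneracy identity. This bookkeeping is the only real obstacle; it is a formal diagram chase, all of the analytic substance having already been invested in the construction of $\Psi^s_\bullet$.

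Finally I would read off the deformation-retraction properties from the three properties of $\Psi^s_\bullet$. At the vertex $f\equiv1$ the map $\alpha_f$ is the inclusion $\Delta^n=\Delta^n\times\{1\}\hookrightarrow\Delta^n\times I$, so $R^s_n(A,f)=\Psi^s_n(A)|_{t=1}=A$ and the homotopy begins at the identity. At $f\equiv0$ we get $R^s_n(A,f)=\Psi^s_n(A)|_{t=0}\in\Omega_r(\Delta^\bullet,W|_U)^{MC,s}$, so it ends in the gauge-fixed locus and defines the retraction. And when $A$ is already gauge-fixed, property (3) gives $\Psi^s_n(A)=q^*A$, whence $R^s_n(A,f)=(q\circ\alpha_f)^*A=A$ for every $f$, since $q\circ\alpha_f$ fixes all vertices and is affine, hence equals $\mathrm{id}_{\Delta^n}$; thus the homotopy is stationary on the subspace. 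This exhibits $R^s_\bullet$ as a strong simplicial deformation retraction, defined on the neighbourhood $\mathcal W_\bullet$ of $U$, which is exactly the assertion.
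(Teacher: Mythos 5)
Your proposal is correct and is essentially the paper's own proof: both define $R^s_\bullet(A,f)$ by pulling back $\Psi^s_\bullet(A)$ along the affine map $\Delta^n\to\Delta^n\times I$ determined on vertices by $f$, with the deformation-retraction properties read off from the three properties of $\Psi^s_\bullet$. The only difference is the immaterial convention $\alpha_f(e_i)=(e_i,f(i))$ versus the paper's $g(v_i)=(v_i,1-f(i))$, which merely swaps which end of ${\mathrm I}_\bullet$ carries the identity and which carries the retraction; your explicit vertex-by-vertex checks of simpliciality are details the paper leaves implicit.
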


\begin{proof}
If $f\in {\mathrm I}_\bullet$, $f:\{0,\dots,\bullet\}\to\{0,1\}$, let $g:\Delta^\bullet\to\Delta^\bullet\times I$ be the affine map given on the vertices by $g(v_i)=(v_i,1-f(i))$ ($i=0,\dots,\bullet$). Given $A\in\mathcal W_\bullet\subset\Omega(\Delta^\bullet, W|_U)^{MC}$ we set $R^s_\bullet(A,f):=g^*\Psi^s_\bullet(A)$.
\end{proof}

\section{Functoriality and naturality}\label{sec:funct}
If $\phi:V\rsa V'$ is a morphism of NQ-manifolds, i.e.\ if we have a morphism of dg algebras $\phi^*:\mathcal A_{V'}\to\mathcal A_V$, by composition we get a morphism of Banach simplicial manifolds
$$(\phi_{*})_{\bullet}:\Omega_r(\Delta^\bullet, V)^{MC}\to\Omega_r(\Delta^\bullet, V')^{MC}$$
(or of Fr\'echet simplicial manifolds if we remove the subscript $r$). In this way $V\mapsto \Omega_r(\Delta^\bullet, V)^{MC}$ is a functor from the category of NQ-manifolds to the category of Banach simplicial manifolds.

The situation is more complicated when we consider the finite-dimensional simplicial manifolds 
$K^s_\bullet(W|_U):=\Omega_r(\Delta^\bullet, W|_U)^{MC,s}$. Let us first define the category in which we shall be working. 

\begin{defn}
If $X_\bullet$ and $Y_\bullet$ are simplicial manifold, a \emph{local homomorphism} $X_\bullet\to Y_\bullet$ is a smooth simplicial map $U_\bullet\to Y_\bullet$, where $U_\bullet\subset X_\bullet$ is an open simplicial submanifold containing all fully degenerate simplices. Two local homomorphisms are declared equal if they coincide on some $U_\bullet$.
\end{defn}

Let 
$$i^s_\bullet:\Omega_r(\Delta^\bullet, W|_U)^{MC,s}\to\Omega_r(\Delta^\bullet, W|_U)^{MC}$$
be the inclusion, and 
$$p^s_\bullet:\Omega_r(\Delta^\bullet, W|_U)^{MC}\to\Omega_r(\Delta^\bullet, W|_U)^{MC,s}$$
the projection given by $p^s_\bullet(A)=\Psi^s_\bullet(A)|_{t=0}$ ($p^s_\bullet$ is a local homomorphism).

If $\phi:W|_U\rsa W'|_{U'}$ is a map of NQ manifolds, i.e.\ if we have a morphism $\phi^*:\mathcal A_{W'|_{U'}}\to \mathcal A_{W|_{U}}$ of dg algebras, we get a local morphism
$$K^s_\bullet(\phi):\Omega_r(\Delta^\bullet, W|_U)^{MC,s}\to\Omega_r(\Delta^\bullet, W'|_{U'})^{MC,s}$$
of local Lie $\ell$-groupoids, defined as the composition
$$\Omega_r(\Delta^\bullet, W|_U)^{MC,s}\xrightarrow{i_s}\Omega_r(\Delta^\bullet, W|_U)^{MC}\xrightarrow{(\phi_{*})_{\bullet}}\Omega_r(\Delta^\bullet, W'|_{U'})^{MC}\xrightarrow{p_s}\Omega_r(\Delta^\bullet, W'|_{U'})^{MC,s}.$$

 Let us observe that 
$$K^s_\bullet(\phi\circ\phi')\neq K^s_\bullet(\phi)\circ K^s_\bullet(\phi')$$ 
in general, i.e.\ $K^s_\bullet$ is not a functor. It is, however, a ``functor up to homotopy'', i.e.\ a homotopy coherent diagram in the sense of Vogt \cite{Vogt}, or, using a more recent terminology,  a quasi-functor. Let us  describe this quasi-functor explicitly. An $n$-simplex $f$ in the nerve of the category of NQ manifolds of the type $W|_U$ is a chain of composable morphisms 
$$f:=\Big(W_0|_{U_0}\xrsa{\phi_0} W_1|_{U_1}\xrsa{\phi_1} \cdots \xrsa{\phi_{n-1}} W_n|_{U_n}\Big).$$ 
Combining the maps $(\phi_{i*})_\bullet$, $i=0,\ldots, n-1$ with the local simplicial deformation retractions $R^s_\bullet$ we obtain simplicial maps 
$$K^s_\bullet(f):K^s_\bullet(W_0|_{U_0})\times {\mathrm I}_\bullet^{n-1}\to K^s_\bullet(W_n|_{U_n})$$ 
defined (in the case of $n=3$, as an illustration) as the composition
$$
\begin{tikzcd}
 K^{s}_\bullet(W_0|_{U_0})\times({\mathrm I}_\bullet)^2\arrow{d}[swap]{i^s_\bullet\times id} & K^{big}_\bullet(W_1|_{U_1})\times({\mathrm I}_\bullet)^2\arrow{d}{R^s_\bullet\times id} & K^{big}_\bullet(W_2|_{U_2})\times {\mathrm I}_\bullet\arrow{d}{R^s_\bullet} & K^{big}_\bullet(W_3|_{U_3})\arrow{d}{p^s_\bullet}   \\
 K^{big}_\bullet(W_0|_{U_0})\times({\mathrm I}_\bullet)^2\arrow{ur}[description]{(\phi_{0*})_\bullet\times id}  & K^{big}_\bullet(W_1|_{U_1})\times {\mathrm I}_\bullet\arrow{ur}[description]{(\phi_{1*})_\bullet\times id} & K^{big}_\bullet(W_2|_{U_2})\arrow{ur}[description]{(\phi_{2*})_\bullet} & K^{s}_\bullet(W_3|_{U_3})
\end{tikzcd}
$$
By construction, they define a homotopy coherent diagram. To explain what it means, let 
us use Vogt's notation for $K^s_\bullet(f)$
$$K^s_\bullet(\phi_0,t_1,\phi_1,t_2,\dots,t_{n-1},\phi_{n-1}):K^s_\bullet(W_0|_{U_0})\to K^s_\bullet(W_n|_{U_n})$$
 where $t_i\in {\mathrm I}_\bullet$, i.e.\ $t_i$'s are non-decreasing maps $\{0,\dots,\bullet\}\to\{0,1\}$. Then
\begin{multline}\label{eq:verybig}
K^s_\bullet(\phi_0,t_1,\dots,t_{n-1},\phi_{n-1})=\\
=\begin{cases}
K^s_\bullet(\phi_1,t_2,\dots,t_{n-1},\phi_{n-1}) & \text{if }\phi_0=id\\
K^s_\bullet(\phi_0,t_1,\dots,\max(t_{i},t_{i+1}),\dots,t_{n-1},\phi_{n-1}) & \text{if }\phi_i=id,\ 0<i<n-1\\
K^s_\bullet(\phi_0,t_1,\dots,t_{n-2},\phi_{n-2}) & \text{if }\phi_{n-1}=id\\
K^s_\bullet(\phi_0,t_1,\dots,\phi_i\circ \phi_{i-1},\dots,t_{n-1},\phi_{n-1}) & \text{if }t_i=0\text{ identically}\\
K^s_\bullet(\phi_i,t_{i+1},\dots,\phi_{n-1})\circ K^s_\bullet(\phi_0,t_1,\dots,\phi_{i-1}) & \text{if }t_i=1\text{ identically}
\end{cases}
\end{multline}
which is Vogt's definition of a homotopy coherent diagram \cite{Vogt}.\footnote{
More compactly, a homotopy coherent diagram can be described as follows (an introduction to the subject can be found in Porter \cite{Porter}). If $\mathcal C$ is a category (in our case the category of NQ-manifolds of the form $W|_U$) and if $\mathcal D$ is a simplicially enriched category  (in our case the category of simplicial manifolds with local morphisms) then a homotopy coherent diagram from $\mathcal C$ to $\mathcal D$ is a simplicially enriched functor $S(\mathcal C)\to\mathcal D$, where $S(\mathcal C)$ is the simplicially enriched category  defined as the free simplicial resolution of $\mathcal C$, introduced by Dwyer and Kan \cite{DwyerKan}.}
We thus have the following result.

\begin{thm}
$K^s_\bullet$ is a homotopy coherent diagram from the category of NQ manifolds of type $W|_U$ to the simplicially enriched category of local simplicial manifolds with the simplicial enrichment given by 
$$\text{Hom}(L,M)_n:=\text{Hom}(L\times\Delta[n],M)$$ 
where $\Delta[n]$ is the simplicial set representing the $n$-simplex.
Here ``local'' means that morphisms are defined to be local homomorphisms of simplicial manifolds.
\end{thm}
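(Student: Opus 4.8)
The plan is to recognize the statement as a packaging result and to reduce it to the verification of the identities \eqref{eq:verybig}. First I would recall, following Dwyer--Kan \cite{DwyerKan} and Vogt \cite{Vogt} (see also Porter \cite{Porter}), the explicit cubical description of the free simplicial resolution $S(\mathcal C)$: its objects are those of $\mathcal C$, and for a chain of $n$ composable morphisms the contribution to the hom--simplicial--set is a cube $({\mathrm I}_\bullet)^{n-1}$, whose combinatorics (insertion of identities, and the two ends $t_i=0,1$ of each interval coordinate) is exactly mirrored by \eqref{eq:verybig}. Under the tensor--hom adjunction for the enrichment $\text{Hom}(L,M)_n=\text{Hom}(L\times\Delta[n],M)$, a simplicially enriched functor $S(\mathcal C)\to\mathcal D$ is precisely the data of the objects $K^s_\bullet(W|_U)$ together with the parametrized maps $K^s_\bullet(\phi_0,t_1,\dots,t_{n-1},\phi_{n-1}):K^s_\bullet(W_0|_{U_0})\times({\mathrm I}_\bullet)^{n-1}\to K^s_\bullet(W_n|_{U_n})$, subject to \eqref{eq:verybig}. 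So it suffices to produce such maps — these are the zig--zags already displayed, built from $i^s_\bullet$, the strictly functorial big pushforwards $(\phi_{i*})_\bullet$, the homotopies $R^s_\bullet(\cdot,t_i)$, and $p^s_\bullet$ — and to check the five relations.

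I would then verify \eqref{eq:verybig} line by line, each case invoking one structural property. The functoriality of the big model, $(\phi\circ\phi')_*=\phi_*\circ\phi'_*$ and $\mathrm{id}_*=\mathrm{id}$ (established at the start of this section), together with $R^s_\bullet(\cdot,0)=\mathrm{id}$ (immediate from $\Psi^s_\bullet(A)|_{t=1}=A$), yields the fourth line: a constant $t_i=0$ collapses the corresponding $R^s_\bullet$ and fuses $\phi_{i-1},\phi_i$ into $\phi_i\circ\phi_{i-1}$. The fifth line uses $R^s_\bullet(\cdot,1)=i^s_\bullet\circ p^s_\bullet$ (from $\Psi^s_\bullet(A)|_{t=0}=p^s_\bullet(A)\in K^s_\bullet$): a constant $t_i=1$ forces the composite to factor through $K^s_\bullet(W_i|_{U_i})$, splitting it as $K^s_\bullet(\phi_i,\dots,\phi_{n-1})\circ K^s_\bullet(\phi_0,\dots,\phi_{i-1})$. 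The first line uses that $R^s_\bullet$ is stationary on $K^s_\bullet$ (property (3) of $\Psi^s_\bullet$, since $q\circ g_f=\mathrm{id}$): with $\phi_0=\mathrm{id}$ the leading homotopy acts trivially on $i^s_\bullet(A)$ and its interval coordinate disappears; the third line is its mirror at the other end.

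The one genuinely non--formal point, and the main obstacle, is the second line: when $\phi_i=\mathrm{id}$ two adjacent homotopies compose, $R^s_\bullet(R^s_\bullet(\cdot,t_i),t_{i+1})$, and this must equal the single homotopy $R^s_\bullet(\cdot,\max(t_i,t_{i+1}))$. This is the compatibility of the retraction with the join (monoid) structure of the simplicial interval ${\mathrm I}_\bullet=\Delta[1]$,
$$R^s_\bullet\circ(R^s_\bullet\times\mathrm{id})=R^s_\bullet\circ(\mathrm{id}\times\max):\ \Omega_r(\Delta^\bullet,W|_U)^{MC}\times{\mathrm I}_\bullet\times{\mathrm I}_\bullet\to\Omega_r(\Delta^\bullet,W|_U)^{MC}.$$
I would establish it from the explicit formula $R^s_\bullet(A,f)=g_f^*\Psi^s_\bullet(A)$ with the staircase sections $g_f(v_i)=(v_i,1-f(i))$: restricting $\Psi^s_\bullet$ along $g_{f'}$ and then along $g_f$ reparametrizes the flow defining $\Psi^s_\bullet$ (the solution of \eqref{eq:dotAh2} with vertical part $q^*s_\bullet G$) so that the combined jump locus is $\max(f,f')$, using that the flow fixes $K^s_\bullet$ pointwise and is idempotent on the already--retracted region. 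Specializing $\max(f,1)=1$ gives $p^s_\bullet\circ R^s_\bullet(\cdot,f)=p^s_\bullet$, which is exactly what the third line needs, so the second and third lines share a single source.

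Finally I would note that each $K^s_\bullet(f)$ is simplicial, being a composite of simplicial maps and a simplicial homotopy, and that all the identities are equalities of \emph{local} homomorphisms, i.e.\ they hold after passing to a common open simplicial neighbourhood of the constant (fully degenerate) simplices; this is the meaning of ``local'' in the statement and does not affect the formal verification. Assembling these checks exhibits the data $\{K^s_\bullet(f)\}$ as a simplicially enriched functor $S(\mathcal C)\to\mathcal D$, which is precisely the asserted homotopy coherent diagram.
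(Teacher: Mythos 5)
Your overall framework is the right one and is essentially the paper's: reduce the statement, via the Dwyer--Kan/Vogt description of homotopy coherent diagrams, to the five identities \eqref{eq:verybig}, and observe that the first, fourth and fifth follow at once from $R^s_\bullet(\cdot,0)=\mathrm{id}$, $R^s_\bullet(\cdot,1)=i^s_\bullet\circ p^s_\bullet$, stationarity of $\Psi^s_\bullet$ on $K^s_\bullet$, and strict functoriality of $(\phi_*)_\bullet$ (the paper itself is terse here, asserting \eqref{eq:verybig} ``by construction''). The gap is in the step you yourself single out as the crux: the identity
$$R^s_\bullet\circ(R^s_\bullet\times\mathrm{id})=R^s_\bullet\circ(\mathrm{id}\times\max):\ \Omega_r(\Delta^\bullet,W|_U)^{MC}\times{\mathrm I}_\bullet\times{\mathrm I}_\bullet\to\Omega_r(\Delta^\bullet,W|_U)^{MC}$$
is \emph{false}, and with it your derivations of the second and third lines of \eqref{eq:verybig} (including the intermediate claim $p^s_\bullet\circ R^s_\bullet(\cdot,f)=p^s_\bullet$, and the assertion that the third line ``mirrors'' the first) lose their justification. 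Concretely, take $W=\R[1]$ (so $Q=0$, $F_Q=0$, every $A\in\Omega^1(\Delta^1)$ is Maurer--Cartan), the Dupont gauge, simplicial degree $1$, and $f=f'=(0,1)\in{\mathrm I}_1$. In this abelian situation $\psi(A_0,G)=A_0+G$, so $p^s_1=p_1$ and $\Psi^s_1(A)=p_1A+t(A-p_1A)+dt\,s_1A$, whence, pulling back along $g_f(x)=(x,1-x)$,
$$R^s_1(A,f)=\Bigl[\bar a+(1-x)\bigl(a(x)-\bar a\bigr)-(s_1A)(x)\Bigr]dx,\qquad A=a(x)\,dx,\ \bar a=\textstyle\int_0^1a .$$
For $A=x\,dx$ one gets $s_1A=\tfrac12(x^2-x)$ and $R^s_1(A,f)=(2x-\tfrac32x^2)\,dx$, while a second application gives $R^s_1\bigl(R^s_1(A,f),f\bigr)=(3x-\tfrac92x^2+2x^3)\,dx$. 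Since $\max(f,f)=f$, your identity would force $R^s_1(\cdot,f)$ to be idempotent, which it is not; replacing $A$ by $\epsilon A$ shows that passing to smaller neighbourhoods (``locality'') does not help. Structurally, in the nonabelian Lie-algebra case $R^s(A,f)$ is the gauge transform of $A_0=p^s(A)$ by $e^{(1-x)\gamma}$ with $\gamma=s_\bullet G$, and iterating produces $e^{(1-x)^2\gamma}$, not $e^{(1-x)\gamma}$: the interval acts through $R^s_\bullet$ only up to the fibers of $p^s_\bullet$, never on the nose.

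What actually makes the second and third lines of \eqref{eq:verybig} true is an invariance (conservation) argument, not a $\max$-compatibility. All homotopies occurring in the construction --- $\Psi^s_\bullet(A)$, with vertical part $q^*s_\bullet G$, and its slanted restrictions, with vertical parts $\lambda_f\cdot s_\bullet G$ --- have vertical parts whose function components vanish at the vertices of $\Delta^n$ (because $s_\bullet$ has this property). Consequently each $R^s_\bullet(\cdot,f)$ moves a point only inside the fiber of $p^s_\bullet$ through it: in the Lie-algebroid case ($\ell=1$) this is the statement that such homotopies are gauge transformations trivial at the vertices, hence preserve all edge holonomies, and gauge-fixed connections near the constants are determined by their holonomies. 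Moreover $(\phi_*)_\bullet$ carries homotopies of this class to homotopies of the same class, hence maps $p^s$-fibers into $p^s$-fibers. The two sides of the second and third identities then differ only by motions inside a single fiber, performed before maps preserving fibers and a terminal $p^s_\bullet$ that collapses them; in particular $p^s_\bullet\circ R^s_\bullet(\cdot,f)=p^s_\bullet$ does hold, but as a consequence of this invariance, not of your false identity. To repair your proof you must formulate and prove this fiber-invariance (that a Maurer--Cartan form on $\Delta^n\times I$ whose vertical part lies in the appropriate class has both ends in the same $p^s_\bullet$-fiber, and that the class is stable under $(\phi_*)_\bullet$); this is a genuinely different, and less formal, argument than the reparametrization you propose.
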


For a general NQ manifold given by a negatively graded vector bundle $V\to M$ (not of the form $W|_U$) one can apply the quasi-functor $K^s_\bullet$ on the nerve $N_\bullet(\mathsf{U})$ of a good cover $\mathsf{U}$ of $M$. We can say a bit more in this situation: the homomorphisms $K^s_\bullet(\phi)$ on overlaps are actually isomorphisms.

Indeed, for any dg-map $\phi:W|_U\rsa W'|_{U'}$
the linearization $(\phi_*)_{\bullet,\text{lin}}$ of  $(\phi_*)_\bullet$ commutes with $s_\bullet, p_\bullet$. Therefore the linearization $(\phi_*)^s_{\bullet,\text{lin}}$ of  $(\phi_*)^s_\bullet$ is functorial, i.e.\ if $\phi':W'|_{U'}\rsa W''|_{U''}$ is another map of NQ manifolds then $(\phi'_*\circ\phi_*)^s_{\bullet,\text{lin}}=(\phi'_*)^s_{\bullet,\text{lin}}\circ (\phi_*)^s_{\bullet,\text{lin}}$. For $\text{id}:W|_U\rsa W|_U$ we have $(\text{id}_*)_\bullet^s=\text{id}_{\Omega_r(\Delta^\bullet,W|_U)^{MC,s}}$. Together with functoriality this implies that if $\phi$ is an isomorphism of dg manifolds then $K^s_\bullet(\phi)$ is an isomorphism of local Lie $\ell$-groupoids.


\section{Integration of (pre)symplectic forms}\label{sec:sympl}

One of the most remarkable facts of Poisson geometry is that if $M$ is a Poisson manifold then $T^*M$ is a Lie algebroid (the corresponding differential on $\Gamma(\bigwedge TM)$ is given by $[\pi,\cdot]$ where $\pi$ is the Poisson structure and $[,]$ the Schouten bracket) and that the corresponding (local) Lie groupoid is symplectic. In \cite{Severa} it was suggested that Courant algebroids should be integrated to (local) 2-symplectic 2-groupoids, and more generally, NQ-manifolds with $Q$-invariant symplectic form of degree $k$ should integrate to (local) $k$-symplectic $\ell$-groupoids (where $k\geq\ell$ and $k=\ell$ whenever the base $M$ is non-trivial). In this section we shall see that it is indeed the case. Let us remark that in the special case of exact Courant algebroids the integration procedure was studied in detail in \cite{MT}.

\begin{defn}
A NQ-manifold $Z$ is $k$-symplectic (resp.\ $k$-presymplectic) if it carries a $Q$-invariant symplectic (resp.\ closed) 2-form $\varpi$ of degree $k$ w.r.t.\ the grading on $Z$.
\end{defn}

As observed in \cite{Severa}, if $\varpi$ is symplectic then $k\geq\ell$ and $k=\ell$ whenever $\dim M\geq1$. Another observation from \cite{Severa} is that any 1-symplectic NQ-manifold is naturally of the form $T^*[1]M$ and the differential $Q$ is given by a Poisson structure on $M$, and that any 2-symplectic NQ-manifold is equivalent to a Courant algebroid. (Courant algebroids were introduced by Liu, Weinstein and Xu in \cite{LWX} and their connection with 2-symplectic NQ-manifolds is explained in detail in Roytenberg \cite{Roytenberg}.) 

To discuss symplectic forms on Lie $\ell$-groupoids, 
let us recall that if $K_\bullet$ is a simplicial manifold then $\bigoplus_{m,n}\Omega^m(K_n)$ is a bicomplex. The first differential is de Rham's $d$ and the second differential $\delta$ is given by the simplicial structure
$$\delta\alpha:=\sum_{p=0}^{n+1}(-1)^p d_p^*\alpha\in\Omega^m(K_{n+1}) \text{ for } \alpha\in\Omega^m(K_n)$$
where $d_p:K_{n+1}\to K_n$ are the face maps.

A Lie groupoid is called \emph{symplectic} if its nerve $K_\bullet$ is endowed with a symplectic form $\omega\in\Omega^2(K_1)$ such that $\delta\omega=0$.

\begin{defn}
A (local) Lie $\ell$-groupoid $K_\bullet$ is \emph{strictly $k$-symplectic} (resp.\ \emph{presymplectic}) if it is endowed with a symplectic (resp.\ closed) 2-form $\omega\in\Omega^2(K_k)$ such that $\delta\omega=0$.
\end{defn}

Let now $Z$ be a $k$-presymplectic NQ-manifold. We shall construct a closed 2-form $\omega^{big}$ on $K^{big}_k(Z)$ satisfying $\delta\omega^{big}=0$. Moreover, if $\varpi$ is symplectic and if $Z$ is of the form $W|_U$ (or if $W|_U$ is a local piece of $Z$) we shall see that the 2-from $\omega^{big}$ restricts to a symplectic form on $K^{s}_k(W|_U)$, i.e.\ that $K^{s}_\bullet(W|_U)$ is a local $k$-symplectic $\ell$-groupoid.

Suppose that $N$ is a compact oriented manifold (possibly with corners) and $f:T[1]N\rsa Z$ a NQ-map. Following the AKSZ construction \cite{AKSZ}, if $u,v\in\Gamma(f^*TZ)$, let us define
$$\omega_{N,f}(u,v):=\int_N(f^!\varpi)(u,v)$$
where $f^!\varpi$ denotes $\varpi$ pulled back to a pairing on  $f^*TZ$
($(f^!\varpi)(u,v)$ is a function on $T[1]N$, i.e.\ a differential form on $N$, and we integrate its top part over $N$). If $Z=W|_U$ then $u,v\in\Omega(N,W)$. In this case, if $\varpi=\varpi_{ij}(\xi)d\xi^i d\xi^j$ ($\varpi_{ij}(\xi)\in\mathcal A_{W|_U}$) and  $A\in\Omega(N,W|_U)^{MC}$ then 
$$\omega_{N,A}(u,v)=\int_N\varpi_{ij}(A)u^iv^j.$$

The differentials $d$ on $T[1]N$ and $Q$ on $Z$ turn $\Gamma(f^*TZ)$ to a differential graded module over $\Omega(N)=C^\infty(T[1]N)$, with a differential $d_{tot}$. When $Z=W|_U$ and thus $\Gamma(f^*TZ)=\Omega(N,W)$, the differential $d_{tot}$ can be computed as
$$d_{tot}u=du-u^i\frac{\partial C_Q}{\partial\xi ^i}(A)\quad\forall u\in\Omega(N,W).$$

The $Q$-invariance of $\varpi$ implies the identity (with signs unimportant for what follows)
\begin{equation}\label{eq:omega-dtot}
d\bigl((f^!\varpi)(u,v)\bigr)=\pm(f^!\varpi)(d_{tot}u,v)\pm(f^!\varpi)(u,d_{tot}v)\in\Omega(N).
\end{equation}
As a consequence, we get the following result (Lemma 3 of \cite{Severa}).

\begin{prop}\label{prop:omega-omega}
Let $i_p:\Delta^{n-1}\to\Delta^n$ be the inclusion of the $p$-th face of $\Delta^n$. Then
\begin{equation}\label{eq:omega-omega}
\sum_{p=0}^n(-1)^p\omega_{\Delta^{n-1},i_p^*f}(i_p^*u,i_p^*v)=\pm\omega_{\Delta^n,f}(d_{tot}u,v)\pm\omega_{\Delta^n,f}(u,d_{tot}v)
\end{equation}
\end{prop}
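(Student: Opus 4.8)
The plan is to derive \eqref{eq:omega-omega} as a direct application of Stokes' theorem to the de Rham differential of the form $\varpi(u,v)\in\Omega(\Delta^n)$, using the $Q$-invariance identity \eqref{eq:omega-dtot} to rewrite the interior contribution and the naturality of the pairing under the face inclusions $i_p$ to rewrite the boundary contribution. Since $\Delta^n$ is a compact oriented manifold with corners and $\varpi(u,v)$ is an honest differential form on it, Stokes' theorem applies without difficulty.

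First I would integrate \eqref{eq:omega-dtot} over $\Delta^n$. Stokes' theorem gives
$$\int_{\Delta^n}d\bigl(\varpi(u,v)\bigr)=\int_{\partial\Delta^n}\varpi(u,v),$$
and by \eqref{eq:omega-dtot} the left-hand side equals $\pm\omega_{\Delta^n,f}(d_{tot}u,v)\pm\omega_{\Delta^n,f}(u,d_{tot}v)$, which is exactly the right-hand side of \eqref{eq:omega-omega}. It therefore remains to identify the boundary integral with the simplicial alternating sum on the left-hand side of \eqref{eq:omega-omega}.

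Then I would decompose $\partial\Delta^n$ into its codimension-one faces. With the standard orientation conventions the oriented boundary is $\partial\Delta^n=\sum_{p=0}^n(-1)^p\,i_p(\Delta^{n-1})$, so that
$$\int_{\partial\Delta^n}\varpi(u,v)=\sum_{p=0}^n(-1)^p\int_{\Delta^{n-1}}i_p^*\bigl(\varpi(u,v)\bigr).$$
The essential point is the naturality identity $i_p^*\bigl(\varpi(u,v)\bigr)=\varpi\bigl(i_p^*u,\,i_p^*v\bigr)$, which holds because $\varpi$ is a fixed $2$-form on $Z$ while the face inclusion induces the NQ-map $i_p^*f$ together with the pullback sections $i_p^*u,i_p^*v$ of $(i_p^*f)^*TZ$; pulling the scalar $\varpi(u,v)$ back along $i_p$ is then the same as evaluating $\varpi$ on the restricted sections. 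Granting this, each summand becomes $\omega_{\Delta^{n-1},i_p^*f}(i_p^*u,i_p^*v)$, and the boundary integral is precisely the left-hand side of \eqref{eq:omega-omega}.

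The only genuinely delicate steps are bookkeeping: checking that the orientation convention on the faces reproduces the simplicial sign $(-1)^p$, and verifying the naturality identity in the graded setting. Since \eqref{eq:omega-omega} is only asserted up to overall signs (those in \eqref{eq:omega-dtot} being left implicit), no sign chase beyond Stokes is needed, and the orientation bookkeeping is routine once conventions are fixed. I expect the naturality identity $i_p^*(\varpi(u,v))=\varpi(i_p^*u,i_p^*v)$ to be the main conceptual point, though it is immediate from the functoriality of the AKSZ pairing $\omega_{N,f}$ under the simplicial face maps.
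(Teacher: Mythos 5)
Your proposal is correct and is exactly the paper's argument: the paper's proof is the one-line statement that the claim follows from Stokes' theorem applied to \eqref{eq:omega-dtot} with $N=\Delta^n$, which is precisely your computation. You have simply spelled out the details (the signed face decomposition of $\partial\Delta^n$ and the naturality $i_p^*(\varpi(u,v))=\varpi(i_p^*u,i_p^*v)$) that the paper leaves implicit.
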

\begin{proof}
 The claim follows from the Stokes theorem applied to \eqref{eq:omega-dtot} with $N=\Delta^n$.
\end{proof}

The tangent space of $K^{big}_k(Z)$ at $f:T[1]\Delta^k\to Z$ is $\Gamma(f^*TZ)^{0,cl}$.
Following \cite{AKSZ} we now define a closed 2-form $\omega^{big}$ on the manifold $K^{big}_k(Z)$  by 
$$\omega^{big}(u,v):=\omega_{\Delta^k,f}(u,v).$$
As a consequence of Proposition \ref{prop:omega-omega} we get the following result.

\begin{thm}
The closed 2-form $\omega^{big}\in\Omega^2(K^{big}_k(Z))$ satisfies $\delta\omega^{big}=0$.
\end{thm}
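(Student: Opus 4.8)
The plan is to evaluate $\delta\omega^{big}$ pointwise on a simplex of $K^{big}_{k+1}(Z)$ and to recognize the resulting alternating sum as the left-hand side of Proposition \ref{prop:omega-omega}, whose right-hand side will then be forced to vanish on tangent vectors. First I would fix $f\in K^{big}_{k+1}(Z)$, i.e.\ an NQ-map $T[1]\Delta^{k+1}\rsa Z$, together with two tangent vectors $u,v\in\Gamma(f^*TZ)^{0,cl}$. I would recall that the face map $d_p:K^{big}_{k+1}(Z)\to K^{big}_k(Z)$ is induced by the affine inclusion $i_p:\Delta^k\to\Delta^{k+1}$ of the $p$-th face, so that $d_p(f)=i_p^*f$ and, on tangent spaces, $T d_p$ is the restriction of sections $u\mapsto i_p^*u$.

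With this identification in hand, I would unwind the definition of the simplicial differential $\delta$ (recalling that $\omega^{big}\in\Omega^2(K^{big}_k(Z))$ so $\delta\omega^{big}\in\Omega^2(K^{big}_{k+1}(Z))$) to obtain
$$(\delta\omega^{big})_f(u,v)=\sum_{p=0}^{k+1}(-1)^p(d_p^*\omega^{big})_f(u,v)=\sum_{p=0}^{k+1}(-1)^p\omega_{\Delta^k,\,i_p^*f}(i_p^*u,i_p^*v).$$
This is precisely the left-hand side of \eqref{eq:omega-omega} for $n=k+1$. Applying Proposition \ref{prop:omega-omega} I would then rewrite it as
$$(\delta\omega^{big})_f(u,v)=\pm\,\omega_{\Delta^{k+1},f}(d_{tot}u,v)\pm\omega_{\Delta^{k+1},f}(u,d_{tot}v).$$

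Finally I would invoke the description of the tangent space recalled just above the theorem: tangent vectors to $K^{big}_{k+1}(Z)$ at $f$ are exactly the $d_{tot}$-closed degree-$0$ sections, i.e.\ $\Gamma(f^*TZ)^{0,cl}$. Hence $d_{tot}u=d_{tot}v=0$, the right-hand side vanishes identically, and since $f,u,v$ were arbitrary we conclude $\delta\omega^{big}=0$. The only genuine bookkeeping, and the step I expect to require the most care, is the first one: verifying that the tangent map of $d_p$ is honestly the restriction $u\mapsto i_p^*u$. This follows because the simplicial structure on $K^{big}_\bullet(Z)$ is induced by precomposition with the affine face inclusions, and because in a local model $W|_U$ the tangent space is identified with the closed forms $\Omega(\Delta^{k+1},W)^{0,cl}$, whose pullbacks along $i_p$ are again closed; once this compatibility is checked, the vanishing is an immediate consequence of Proposition \ref{prop:omega-omega}.
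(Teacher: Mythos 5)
Your proposal is correct and follows essentially the same route as the paper: evaluate $\delta\omega^{big}$ at $f\in K^{big}_{k+1}(Z)$ on tangent vectors $u,v\in\Gamma(f^*TZ)^{0,cl}$, recognize the alternating sum over faces as the left-hand side of Proposition \ref{prop:omega-omega}, and conclude from $d_{tot}u=d_{tot}v=0$ that it vanishes. Your version is in fact slightly more careful than the paper's (which compresses the identification of $Td_p$ with $u\mapsto i_p^*u$ and is loose about the simplicial degree), but the mathematical content is identical.
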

\begin{proof}
Let us use Equation \eqref{eq:omega-omega} when $u,v\in T_f K^{big}_k(Z)=\Gamma(f^*TZ)^{0,cl}$: the RHS vanishes as $d_{tot}u=d_{tot}v=0$, and the LHS is $(\delta\omega^{big})(u,v)$. As a result $\delta\omega^{big}=0$.
\end{proof}

Suppose now that $Z$ is of the form $W|_U$. Let us define the closed 2-form 
$$\omega^s_{W|_U}\in\Omega^2(K^s_k(W|_U))$$
 as the restriction of $\omega^{big}$ to the simplicial submanifold $K^s_k(W|_U)\subset K^{big}_k(W|_U)$. As $\omega^{big}$ it satisfies the relation
$$\delta\omega^s_{W|_U}=0.$$

\begin{thm}
If the closed 2-form $\varpi$ on $W|_U$ is symplectic then $\omega^s_{W|_U}\in\Omega^2(K^s_k(W|_U))$ is symplectic on an open neighbourhood of the totally degenerate simplicies $U\subset K^s_k(W|_U)$. The local Lie $\ell$-groupoid $K^s_\bullet(W|_U)$ is thus strictly $k$-symplectic (after we possibly replace it with an open neighbourhood of the totally degenerate simplices).
\end{thm}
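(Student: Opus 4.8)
Since $\omega^s_{W|_U}$ is by construction the restriction of the closed form $\omega^{big}$, it is closed and already satisfies $\delta\omega^s_{W|_U}=0$; the only thing left to establish is non-degeneracy. The plan is to exploit that $K^s_k(W|_U)$ is finite-dimensional, so non-degeneracy of the $2$-form $\omega^s_{W|_U}$ is an open condition on the base, and $\omega^s_{W|_U}$ depends smoothly on its point. It therefore suffices to prove non-degeneracy at each totally degenerate simplex $A_c\in U\subset K^s_k(W|_U)$; non-degeneracy then persists on an open neighbourhood of $U$, which is exactly the asserted (necessarily local) statement. So the whole problem is to compute $\omega^s_{W|_U}$ on $T_{A_c}K^s_k(W|_U)$ and show it is non-degenerate there.

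The tangent space is identified, via the projection $\pi_\bullet$ with respect to $(C\otimes W)^0$ used in the deformation-retraction section, with the closed degree-$0$ Whitney forms $E(\Delta^k,W)^{0,cl}$. Because de Rham $d$ preserves the $W$-grading while the total-degree-$0$ constraint ties form degree to $W$-degree, the closedness condition decouples across $W$-degrees and this space splits as $\bigoplus_a Z^a(\Delta^k)\otimes W^{-a}$, where $Z^a(\Delta^k)=\ker\!\big(d\colon E^a(\Delta^k)\to E^{a+1}(\Delta^k)\big)$ is the space of Whitney $a$-cocycles. On such vectors the AKSZ formula reads $\omega^s_{W|_U}(u,v)=\int_{\Delta^k}\varpi_{ij}(A_c)\,u^iv^j$. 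Evaluating the coefficient functions $\varpi_{ij}\in\mathcal A_{W|_U}$ at the degree-$0$ point $A_c$ annihilates every summand of positive degree, so only the terms with $\deg\xi^i+\deg\xi^j=k$ survive; these assemble into the constant, non-degenerate pairing $\varpi_{A_c}$ that the symplectic form $\varpi$ induces between $W^{-a}$ and $W^{-(k-a)}$.

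Using the musical isomorphism of $\varpi_{A_c}$ to lower an index — an isomorphism $W\cong W^*$ of degree $k$ that commutes with $d$, since it acts only on the $W$-factor — non-degeneracy of $\omega^s_{W|_U}$ at $A_c$ becomes equivalent, block by block in $a$, to perfectness of the integration pairing
$$\int_{\Delta^k}\colon Z^a(\Delta^k)\times Z^{k-a}(\Delta^k)\longrightarrow\R,\qquad (\alpha,\beta)\longmapsto\int_{\Delta^k}\alpha\wedge\beta.$$
Non-degeneracy of $\varpi_{A_c}$ on each pair $W^{-a},W^{-(k-a)}$ then upgrades this to non-degeneracy of $\omega^s_{W|_U}$. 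The perfectness of this Whitney-cochain pairing is the main obstacle. For $a=0$ (and dually $a=k$) it is immediate, since $Z^0$ is the constants, $Z^k=E^k(\Delta^k)=\R\,\omega_{0\ldots k}$, and $\int_{\Delta^k}\omega_{0\ldots k}=1$.

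For $1\le a\le k$ every cocycle is a coboundary, because $E^\bullet(\Delta^k)$ is exactly the simplicial cochain complex of $\Delta^k$ and is acyclic in positive degrees; writing $\alpha=d\gamma$ and using $d\beta=0$, Stokes reduces the pairing to the boundary integral $\int_{\partial\Delta^k}\gamma\wedge\beta$. I expect the crux to be showing that this cannot vanish against all closed $\beta\in E^{k-a}(\Delta^k)$ unless $\alpha=0$. The route I would take is induction on $k$: the restriction of a Whitney form to a facet is either $0$ or the corresponding Whitney form on that $\Delta^{k-1}$, so the boundary integral is a sum of facet pairings to which the inductive hypothesis applies, allowing one to detect a nonzero $\alpha$. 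This step is precisely the cochain-level Poincar\'e--Lefschetz duality of the simplex, and as a fallback it can also be read off from the explicit combinatorial formula for $\int_{\Delta^k}\omega_I\wedge\omega_J$ (nonzero exactly when $I\cup J$ exhausts the vertices and $|I\cap J|=1$). Granting this, $\omega^s_{W|_U}$ is non-degenerate at every $A_c\in U$, hence on a neighbourhood of $U$, and together with the already established $\delta\omega^s_{W|_U}=0$ this makes $K^s_\bullet(W|_U)$ a strictly $k$-symplectic local Lie $\ell$-groupoid after the corresponding restriction.
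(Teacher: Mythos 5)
Your argument breaks at the identification of the tangent space, and this is a genuine gap, not a cosmetic one. At a totally degenerate simplex $x\in U$ the vector field $Q$ vanishes, but its linearization $Q_{lin}w=-w^i\frac{\partial F_Q}{\partial\xi^i}(x)$ is in general a \emph{nonzero} differential on $W$ (for a Courant algebroid over a manifold, for instance, it contains the anchor). Linearizing the defining equations $dA=F_Q(A)$, $s_\bullet A=0$ at $x$ gives
$$T_xK^s_k(W|_U)=E(\Delta^k,W)^{0,cl_{tot}},$$
the Whitney forms closed under the \emph{total} differential $d_{tot}=d+Q_{lin}$; this subspace of $\Omega_r(\Delta^k,W)^0$ is what $\omega^{big}$ is actually restricted to. Since $Q_{lin}$ shifts the $W$-degree, the condition $d_{tot}u=0$ couples the components $u_a\in E^a(\Delta^k)\otimes W^{-a}$ (it reads $du_a\pm Q_{lin}u_{a+1}=0$), so the tangent space is \emph{not} $\bigoplus_a Z^a(\Delta^k)\otimes W^{-a}$ unless $Q_{lin}=0$. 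Your identification with $E(\Delta^k,W)^{0,cl}$ via $\pi_\bullet$ is only an abstract linear isomorphism: the AKSZ formula $\omega^s(u,v)=\int_{\Delta^k}\varpi_{ij}(x)u^iv^j$ holds for the genuine tangent vectors and is not preserved when one replaces $u,v$ by their projections (the discarded $(C\otimes W)^0$-components contribute to the integral). So you end up proving non-degeneracy of a different bilinear form on a different space; the argument is complete only in the special case $Q_{lin}=0$ (e.g.\ an $L_\infty$-algebra with no linear term), and it fails precisely in the degree-mixing situations the theorem is aimed at, such as Courant algebroids over a base of positive dimension.

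Ironically, the combinatorial lemma you flag as the crux (perfectness of $\int_{\Delta^k}\colon Z^a\times Z^{k-a}\to\R$) is true and is essentially the computational heart of the paper's proof, but the paper packages it so that the $Q_{lin}$-coupling is harmless. It models $E(\Delta^k)$ by the Grassmann algebra $\tilde E_k=\bigwedge(\epsilon_0,\dots,\epsilon_k)$ via $\epsilon_{i_1}\cdots\epsilon_{i_m}\mapsto\omega_{i_1\dots i_m}$, with $d=\sum_i\epsilon_i$, $\partial=\sum_i\partial/\partial\epsilon_i$, $d\partial+\partial d=k+1$, and computes $(\sigma,\tau):=\int_{\Delta^k}\chi(\sigma)\chi(\tau)=\frac{(i-1)!(j-1)!}{(k+1)!}\langle\sigma,\partial\tau\rangle$, whence the kernel of the integration pairing is exactly $\ker\partial$. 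Because $\partial$ anticommutes with $Q_{lin}$ as well, one still has $d_{tot}\partial+\partial d_{tot}=k+1$, so $\ker d_{tot}$ (and not only $\ker d$) is a complement of $\ker\partial$, and the restriction of $(\,,\,)\otimes\varpi_x$ to $(\tilde E_k\otimes W)^{1,cl_{tot}}$ is therefore non-degenerate. This homotopy-operator argument is what should replace your degree-by-degree block decomposition: it proves non-degeneracy on the correct, coupled tangent space in one stroke, and it simultaneously supplies the proof of your unproved pairing lemma (the case $Q_{lin}=0$).
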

\begin{proof}
It is enough to prove that $\omega^s_{W|_U}$ is non-degenerate at the points of $U\subset K^s_k(W|_U)$. If $x\in U$, the linearization of $Q$ at $x$ gives us a differential $Q_{lin}:W\to W$ making $W$ to a cochain complex; explicitly, if $w\in W$,
$$Q_{lin}w=-w^i\frac{\partial C_Q}{\partial\xi ^i}(x).$$
 The tangent space $T_x K^s_\bullet(W|_U)$ is
 $$T_x K^s_\bullet(W|_U)=E(\Delta^\bullet,W)^{0,cl_{tot}}$$
where the subscript $cl_{tot}$ means closed w.r.t.\ the total differential $d+Q_{lin}$.

Let us introduce Grassmann parameters $\epsilon_0,\dots,\epsilon_n$ of degree 1 and consider the graded vector space $\tilde E_n:=\bigwedge(\epsilon_0,\dots,\epsilon_n)$. On $\tilde E_n$ there is a differential 
$$d=\sum \epsilon_i$$
of degree 1, and a differential
$$\partial=\sum\frac{\partial}{\partial\epsilon_i}$$
of degree -1, and
$$d\partial+\partial d=n+1.$$
Both $d$ and $\partial$ are thus acyclic and $\tilde E_n$ is the direct sum of its $d$-closed part and it $\partial$-closed part.

Moreover, we have a morphism of chain complexes
$$\chi:(\tilde E_n[1],d)\to (E(\Delta^n),d),\ \epsilon_{i_1}\dots\epsilon_{i_m}\mapsto\omega_{i_1\dots i_m},\ 1\mapsto 0$$
which is an isomorphism with the exception of degree $-1$. We can use $\chi$ to identify $T_x K^s_n(W|_U)=E(\Delta^n,W)^{0,cl_{tot}}$ with $(\tilde E_n\otimes W)^{1,cl_{tot}}$.

Let us use the non-degenerate pairing $\tilde E_n\otimes\tilde E_n\to\R$
$$\langle\sigma,\tau\rangle:=\text{the coefficient of $\epsilon_0\dots\epsilon_n$ in $\sigma\tau$}$$
and the pairing $\tilde E_n\otimes\tilde E_n\to\R$
$$(\sigma,\tau)=\int_{\Delta^n}\chi(\sigma)\chi(\tau).$$
A straightforward calculation shows that
$$(\sigma,\tau)=\frac{(i-1)!(j-1)!}{(n+1)!}\langle\sigma,\partial\tau\rangle\quad\forall\sigma\in\tilde E_n^i,\tau\in\tilde E_n^j (i,j\geq1).$$
 As a consequence, the kernel of $(,)$ is the $\partial$-closed  part of $\tilde E_n$.

We can now prove that $\omega^s_{W|_U}$ is non-degenerate at $x$. The symplectic form $\varpi$ at $x$ gives us a non-degenerate pairing $\varpi_x:W\otimes W\to\R$ of degree $k$, and $\omega^s_{W|_U}$ on 
$$T_x K^s_k(W|_U)\cong(\tilde E_k\otimes W)^{1,cl_{tot}}$$
is, by definition, the restriction of $(,)\otimes\varpi_x$ to $(\tilde E_k\otimes W)^{1,cl_{tot}}\subset\tilde E_k\otimes W$. The kernel of $(,)\otimes\varpi_x$ is the $\delta$-closed part of $\tilde E_k\otimes W$ and $(\tilde E_k\otimes W)^{cl_{tot}}$ is its complement (as $d_{tot}\partial+\partial d_{tot}=k+1$), hence $(,)\otimes\varpi_x$ is non-degenerate on $(\tilde E_k\otimes W)^{1,cl_{tot}}$, as we wanted to show.
\end{proof}

\section{$A_\infty$-functoriality of $\omega^s$}\label{sec:sympl-funct}

If $Y$ is a simplicial set and $\tau\in Y_n$, let $\hat\tau:\Delta[n]\to Y$ be the  morphism sending the non-degenerate $n$-simplex of $\Delta[n]$ to $\tau$.
If $X$ is another simplicial set and if $\sigma\in(\Delta[n]\times X)_N$ (for some $N\in\mathbb N$), let
$$\sigma^\sharp:Y_n\to Y_N\times X_N$$
be the map defined via
$$\sigma^\sharp(\tau)=(\hat\tau\times id_X)(\sigma).$$

If $K$ is a simplicial manifold, we thus get the map (where we understand $X_N$ as discrete)
$$\sigma^*:=(\sigma^\sharp)^*:\Omega(K_N\times X_N)=\Omega(K_N)\times X_N\to\Omega(K_n).$$
More generally, if $c=\sum_i a_i\sigma_i$ ($a_i\in\R$) is a $N$-chain in $\Delta[n]\times X$, we set
$$c^*:=\sum_i a_i \sigma_i^*.$$
By construction we have
\begin{equation}\label{eq:c-star-del}
(\partial c)^*\alpha=\delta(c^*\alpha)
\end{equation}
for every $\alpha\in\Omega(K_n)$.

Let us now consider the special case $X=\mathrm I^m$ and the chain 
$$c_{m,n}=[I^m\times\Delta^n]\in C_{m+n}(\Delta[n]\times\mathrm I^m)$$
 giving the fundamental class (rel boundary) of $I^m\times\Delta^n$; it is the signed sum of all non-degenerate $m+n$-simplices of $\Delta[n]\times\mathrm I^m$ with the signs given by comparing with the orientation of the space $I^m\times\Delta^n$.
Let us use the notation
$$\mathscr I^m:=c_{m,n}^*:\Omega(K_{n+m}\times(\mathrm I_{n+m})^m)\to\Omega(K_n).$$

\begin{prop}\label{prop:i}
For any $\alpha\in\Omega\bigl(K_{n+m}\times(\mathrm I_{n+m})^m\bigr)$ we have
$$\bigl(\delta\mathscr I^m-(-1)^m\mathscr I^m\delta\bigr)\alpha=\sum_{r=1}^m (-1)^{r-1}\Bigl(\mathscr I^{m-1}(\alpha|_{1_r})-\mathscr I^{m-1}(\alpha|_{0_r})\Bigr)\in\Omega(K_{n+1})$$
where $\alpha|_{0_r},\alpha|_{1_r}\in\Omega\bigl(K_n\times(\mathrm I_n)^{m-1}\bigr)$ is obtained from $\alpha$ by restricting the $r$'th $\mathrm I_n$ to $0$ and to $1$ respectively.
\end{prop}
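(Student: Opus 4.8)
The plan is to realise all the operators appearing in the identity as pull-backs $[\Sigma]^*$ of explicit $(m+n)$-chains $\Sigma$ in $\Delta[n+1]\times\mathrm I^m$, and then to read the whole statement off the single geometric fact that $\partial(I^m\times\Delta^{n+1})$ decomposes into its cube-boundary and simplex-boundary pieces.

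First I would identify the two terms $\delta\mathscr I^m\alpha$ and $\mathscr I^m\delta\alpha$. Unwinding $\delta=\sum_p(-1)^p d_p^*$ on $\Omega(K_\bullet)$ and using that $d_p\colon K_{n+1}\to K_n$ corresponds, under $\tau\mapsto\hat\tau$, to the inclusion of $\Delta^n$ as the $p$-th face of $\Delta^{n+1}$, one gets $d_p^*(\mathscr I^m\alpha)=[\,I^m\times(\text{$p$-th face of }\Delta^{n+1})\,]^*\alpha$ and hence
$$\delta\mathscr I^m\alpha=\bigl[\,I^m\times\partial\Delta^{n+1}\,\bigr]^*\alpha .$$
Dually, since $\hat\tau\times\mathrm{id}$ is a simplicial map it commutes with face maps, so the input differential $\delta$ on $\Omega\bigl((K\times\mathrm I^m)_\bullet\bigr)$ is intertwined by $c_{m,n+1}^*$ with the chain boundary; this is exactly \eqref{eq:c-star-del} applied to $c_{m,n+1}=[I^m\times\Delta^{n+1}]$, and it gives
$$\mathscr I^m\delta\alpha=\bigl[\,\partial(I^m\times\Delta^{n+1})\,\bigr]^*\alpha .$$

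Next I would feed in the boundary formula for the prism,
$$\partial(I^m\times\Delta^{n+1})=(\partial I^m)\times\Delta^{n+1}\pm I^m\times\partial\Delta^{n+1},\qquad \partial I^m=\sum_{r=1}^m(-1)^{r-1}(F_r^1-F_r^0),$$
where $F_r^\epsilon\cong I^{m-1}$ is the face of the cube on which the $r$-th coordinate is $\epsilon$. By the very definition of $\sigma^\sharp$, pulling $\alpha$ back along $F_r^\epsilon\times\Delta^{n+1}$ amounts to freezing the $r$-th interval variable at $\epsilon$ and then applying $\mathscr I^{m-1}$, so that
$$\bigl[(\partial I^m)\times\Delta^{n+1}\bigr]^*\alpha=\sum_{r=1}^m(-1)^{r-1}\bigl(\mathscr I^{m-1}(\alpha|_{1_r})-\mathscr I^{m-1}(\alpha|_{0_r})\bigr).$$
Substituting the two displayed expressions for $\delta\mathscr I^m\alpha$ and $\mathscr I^m\delta\alpha$, the $I^m\times\partial\Delta^{n+1}$ contributions cancel, leaving precisely the combination $\delta\mathscr I^m-(-1)^m\mathscr I^m\delta$ on one side and the cube-boundary sum on the other, which is the assertion.

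The conceptual core is therefore just Stokes' theorem on the manifold-with-corners $I^m\times\Delta^{n+1}$, transcribed through \eqref{eq:c-star-del}; the step I expect to be most delicate is the sign bookkeeping. One has to commit to a single convention for the orientation of the fundamental chain $[I^m\times\Delta^n]$ inside $\Delta[n]\times\mathrm I^m$ — note the transposition of the two factors, which itself contributes a shuffle sign — as well as for the Koszul sign in $\partial(A\times B)$ and for $\partial I^m$, and then check that these conspire to give exactly the coefficient $-(-1)^m$ in front of $\mathscr I^m\delta$ together with the signs $(-1)^{r-1}$ on the right. I would fix these conventions by verifying the formula by hand in the case $m=1$ and then propagate them to general $m$.
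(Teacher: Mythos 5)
Your proposal is correct and takes essentially the same route as the paper: the paper's one-line proof likewise combines the prism boundary decomposition $\partial c_{m,n}=\partial[I^m]\times[\Delta^n]+(-1)^m[I^m]\times\partial[\Delta^n]$ with the chain--cochain duality \eqref{eq:c-star-del}, which is precisely your identification of $\delta\mathscr I^m\alpha$ and $\mathscr I^m\delta\alpha$ as pullbacks of $\alpha$ along $[I^m\times\partial\Delta^{n+1}]$ and $\partial[I^m\times\Delta^{n+1}]$ respectively. Your caveat about orientation and sign conventions is warranted (the overall sign for even $m$ genuinely depends on how one orients the fundamental chain and the product boundary), but the paper's own proof fixes these conventions with no more precision than you do, so nothing essential is missing.
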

\begin{proof}
The boundary of $c_{m,n}=[I^m\times\Delta^n]\in C_{m+n}(\Delta[n]\times\mathrm I^m)$ is
$$\partial c_{m,n}=\partial[I^m]\times[\Delta]^n+(-1)^m[I^m]\times\partial[\Delta^n],$$
so Equation \eqref{eq:c-star-del} gives us the identity (for every $\alpha\in\Omega(K_{m+n}\times(\mathrm I_{m+n})^m)$)
$$\delta(c_{m,n}^*\alpha)=\sum_{r=1}^m (-1)^{r-1}\Bigl(c_{m-1,n}^*(\alpha|_{1_r})-c_{m-1,n}^*(\alpha|_{0_r})\Bigr)+(-1)^m c_{m,n-1}^*\delta\alpha$$
which is the identity we wanted to prove.
\end{proof}

We can now deal with the problem of functoriality of $\omega^s$. Let us consider the category $\mathcal C_{\varpi,k}$ of $k$-presymplectic NQ-manifolds of the form $W|_U$. Morphisms of this category are NQ-maps $\phi:W|_U\rsa W'|_{U'}$ such that $\phi^*\varpi_{W'|_{U'}}=\varpi_{W|_U}$.

For every chain of morphisms of $\mathcal C_{\varpi,k}$ (i.e.\ for every simplex of the nerve of $\mathcal C_{\varpi,k}$)
$$f:=\Big(W_0|_{U_0}\xrsa{\phi_0} W_1|_{U_1}\xrsa{\phi_1} \cdots \xrsa{\phi_{n-1}} W_n|_{U_n}\Big)$$
let us define $\omega^s_{W_0|_{U_0}}(f)\in\Omega^{2,cl}\bigl(K^s_{k-n}(W_0|_{U_0})\bigr)$ as $\mathscr I^n$ of the pullback of $\omega^{big}_{W_n|_{U_n}}$ via
the composition
 (with $n=2$ as an illustration)
\begin{equation}\label{eq:snake-omega}
\begin{tikzcd}
 K^{s}_\bullet(W_0|_{U_0})\times({\mathrm I}_\bullet)^2\arrow{d}[swap]{i^s_\bullet\times id} & K^{big}_\bullet(W_1|_{U_1})\times({\mathrm I}_\bullet)^2\arrow{d}{R^s_\bullet\times id} & K^{big}_\bullet(W_2|_{U_2})\times {\mathrm I}_\bullet\arrow{d}{R^s_\bullet}   \\
 K^{big}_\bullet(W_0|_{U_0})\times({\mathrm I}_\bullet)^2\arrow{ur}[description]{(\phi_{0*})_\bullet\times id}  & K^{big}_\bullet(W_1|_{U_1})\times {\mathrm I}_\bullet\arrow{ur}[description]{(\phi_{1*})_\bullet\times id} & K^{big}_\bullet(W_2|_{U_2})
\end{tikzcd}
\end{equation}

\begin{thm}\label{thm:del-omega-s}
The closed 2-forms $\omega^s(f)$ satisfy the identities
\begin{multline}\label{eq:del-omega-s}
\delta\omega^s_{W_0|_{U_0}}(\phi_0,\phi_1,\dots,\phi_{n-1})=\\
=\omega^s_{W_0|_{U_0}}(\phi_1\circ\phi_0,\phi_2,\dots,\phi_{n-1})-\omega^s_{W_0|_{U_0}}(\phi_0,\phi_2\circ\phi_1,\phi_3,\dots,\phi_{n-1})+{}\\
\dots+(-1)^{n-1}\omega^s_{W_0|_{U_0}}(\phi_0,\phi_1,\dots,\phi_{n-2})-{}\\
-\sum_{i=1}^n(-1)^{i-1}\mathscr I^{i-1}\bigl(K^s(\phi_0,\dots,\phi_{i-1})^*\omega^s_{W_{i}|_{U_{i}}}(\phi_i,\dots,\phi_{n-1})\bigr),
\end{multline}
\begin{equation}\label{eq:omega-s-len}
\omega^s_{W_0|_{U_0}}(\phi_0,\phi_1,\dots,\phi_{n-1})=0\text{ for }n\geq k,
\end{equation}
\begin{equation}\label{eq:omega-s-deg}
\omega^s_{W_0|_{U_0}}(\phi_0,\phi_1,\dots,\phi_{n-1})=0\text{ if some }\phi_i=id.
\end{equation}
\end{thm}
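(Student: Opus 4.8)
The three identities have different characters, so I would prove the substantive one, \eqref{eq:del-omega-s}, first and then treat \eqref{eq:omega-s-len} and \eqref{eq:omega-s-deg} as normalization statements. Write $\Phi_f$ for the composite in \eqref{eq:snake-omega}, a map $K^s_\bullet(W_0|_{U_0})\times(\mathrm I_\bullet)^n\to K^{big}_\bullet(W_n|_{U_n})$, and set $\alpha:=\Phi_f^*\omega^{big}_{W_n|_{U_n}}$, so that by definition $\omega^s_{W_0|_{U_0}}(f)=\mathscr I^n\alpha$. The first step is to note that $\delta\alpha=0$: since $\Phi_f$ is assembled from the simplicial maps $i^s_\bullet$, $(\phi_{i*})_\bullet$ and the simplicial deformation retraction $R^s_\bullet$, it is itself simplicial (as a local homomorphism), so $\delta$ commutes with $\Phi_f^*$ and $\delta\alpha=\Phi_f^*(\delta\omega^{big})=0$ by the relation $\delta\omega^{big}=0$. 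Plugging this into Proposition \ref{prop:i} with $m=n$ kills the $\mathscr I^n\delta$ term and leaves
\[
\delta\omega^s_{W_0|_{U_0}}(f)=\sum_{r=1}^n(-1)^{r-1}\bigl(\mathscr I^{n-1}(\alpha|_{1_r})-\mathscr I^{n-1}(\alpha|_{0_r})\bigr).
\]
Everything then reduces to reading off the endpoint restrictions of the $r$-th interval, which is the one fed to the $r$-th copy of $R^s_\bullet$.

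At the endpoint where $R^s_\bullet$ degenerates to the identity, the restriction fuses the two neighbouring maps; at the endpoint where $R^s_\bullet$ becomes the full projection $p^s_\bullet$, the composite splits. This is exactly the content of the homotopy-coherence relations \eqref{eq:verybig}. For $1\le r\le n-1$ the fusing endpoint replaces $(\phi_{r*})_\bullet\circ(\phi_{(r-1)*})_\bullet$ by $(\phi_r\circ\phi_{r-1})_*$, producing $\omega^s_{W_0|_{U_0}}(\phi_0,\dots,\phi_r\circ\phi_{r-1},\dots,\phi_{n-1})$ after $\mathscr I^{n-1}$; the $r=n$ fusing endpoint removes the last retraction, and here one uses the only non-formal input, namely that every morphism of $\mathcal C_{\varpi,k}$ satisfies $\phi^*\varpi=\varpi$, so $(\phi_{(n-1)*})_\bullet^*\omega^{big}_{W_n|_{U_n}}=\omega^{big}_{W_{n-1}|_{U_{n-1}}}$ and the remaining composite computes $\omega^s_{W_0|_{U_0}}(\phi_0,\dots,\phi_{n-2})$. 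These give the composition terms of \eqref{eq:del-omega-s}. The splitting endpoints insert $p^s_\bullet$ at position $i=r$, factoring $\Phi_f$ through $K^s_\bullet(W_i|_{U_i})$ as $K^s_\bullet(\phi_0,\dots,\phi_{i-1})$ followed by the composite defining $\omega^s_{W_i|_{U_i}}(\phi_i,\dots,\phi_{n-1})$; splitting $\mathscr I^{n-1}$ into the $i-1$ intervals of the first block and the $n-i$ of the second yields $\mathscr I^{i-1}\bigl(K^s(\phi_0,\dots,\phi_{i-1})^*\omega^s_{W_i|_{U_i}}(\phi_i,\dots,\phi_{n-1})\bigr)$. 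Reconciling the $(-1)^{r-1}$ of Proposition \ref{prop:i} with the orientation of $c_{n,\cdot}$ and the endpoint behaviour of $R^s_\bullet$ fixes all signs and gives \eqref{eq:del-omega-s}.

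For \eqref{eq:omega-s-len} note that $\omega^s(f)\in\Omega^{2,cl}(K^s_{k-n}(W_0|_{U_0}))$, so it vanishes for $n>k$ for want of a simplicial degree $k-n<0$. For $n=k$ the form lives on $K^s_0=U_0$; since $R^s_\bullet$ fixes constant (fully degenerate) solutions and $(\phi_{i*})_\bullet$ preserves constancy, $\Phi_f$ sends a degenerate $k$-simplex over $x\in U_0$ (with any interval datum) to the constant solution $A_y$ over its image $y\in U_k$, so $\mathscr I^k\alpha$ is a sum of pullbacks of $\omega^{big}$ along degeneracies $K^{big}_0\to K^{big}_k$. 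On such a constant simplex the tangent vectors inherited from $T_xU_0$ are degree-$0$ constant $W_k$-valued $0$-forms, and $\omega^{big}(u,v)=\int_{\Delta^k}\varpi(A_y)(u,v)$ vanishes because $\varpi$ has internal degree $k\ge1$ while both arguments have internal degree $0$, so $\varpi(A_y)(u,v)$ is the value at the degree-$0$ point $y$ of a function of positive degree. For \eqref{eq:omega-s-deg}, the relations \eqref{eq:verybig} show that if $\phi_i=\mathrm{id}$ then $\Phi_f$ stops depending non-trivially on one of the $n$ interval parameters (for inner $i$ two intervals are merged by $\max$; for $i=0$ or $i=n-1$ the adjacent retraction is applied to an already gauge-fixed argument, where $R^s_\bullet$ is the constant homotopy by property (3) of $\Psi^s_\bullet$). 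Thus $\alpha$ is pulled back from $(\mathrm I_\bullet)^{n-1}$, and $\mathscr I^n=c_{n,\cdot}^*$, built on the fundamental class of the full cube $I^n$, annihilates forms factoring through fewer interval directions.

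The main obstacle is the bookkeeping in the second paragraph: correctly reading the two endpoint restrictions $\alpha|_{0_r},\alpha|_{1_r}$ off the snake \eqref{eq:snake-omega}, tracking the factorization of $\mathscr I^{n-1}$ into $\mathscr I^{i-1}$ on the first block and the $\omega^s$-integral on the second, and reconciling every sign. Invoking the presymplecticity $\phi^*\varpi=\varpi$ at precisely the $r=n$ endpoint, and nowhere else, is the one place where the geometry rather than formal simplicial algebra enters.
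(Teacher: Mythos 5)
Your treatment of \eqref{eq:del-omega-s} and \eqref{eq:omega-s-len} is essentially the paper's own proof: the paper introduces your $\Phi_f$ as a map $q_\bullet(\phi_0,t_1,\dots,\phi_{n-1},t_n)$, records its endpoint behaviour in the display \eqref{eq:yetbigger} (the analogue for $q_\bullet$ of \eqref{eq:verybig}, which your endpoint analysis reproduces, including the use of $\phi_{n-1}^*\varpi_{W_n|_{U_n}}=\varpi_{W_{n-1}|_{U_{n-1}}}$ exactly and only at the last fusing endpoint), and then combines $\delta\omega^{big}=0$ with Proposition \ref{prop:i}, as you do; your constancy-plus-degree argument for $n=k$ correctly fills in what the paper dismisses as ``obvious''.

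There is, however, a genuine gap in your argument for \eqref{eq:omega-s-deg}. The principle you invoke --- that $\mathscr I^n=c_{n,\cdot}^*$ annihilates any form pulled back along a surjection $(\mathrm I_\bullet)^n\to(\mathrm I_\bullet)^{n-1}$ (forgetting an interval, or a $\max$-merge) --- is false. The pushforward of the fundamental chain under such a surjection is a nonzero chain consisting of degenerate simplices (the target $\Delta[m]\times\mathrm I^{n-1}$ has dimension $m+n-1$), and the operation $c\mapsto c^*$ does not kill degenerate simplices: already for $n=1$, if $\alpha=\on{pr}^*\beta$ is independent of the interval variable, then $\mathscr I^1\alpha=\sum_{j=0}^{m}(-1)^{j}s_j^*\beta$, an alternating sum of pullbacks along the degeneracies $s_j:K_m\to K_{m+1}$, which is nonzero for a general form $\beta$. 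What makes the conclusion true in the case at hand is a normalization property of $\omega^{big}$ that you never state or use: $s_j^*\omega^{big}=0$ for every degeneracy $s_j:K^{big}_{k-1}\to K^{big}_k$. Indeed, $s_j$ acts by pulling differential forms back along the affine collapse $\pi_j:\Delta^k\to\Delta^{k-1}$, so $(s_j^*\omega^{big})(u,v)=\int_{\Delta^k}\pi_j^*\bigl(\varpi(u,v)\bigr)$, and the integrand, being pulled back from $\Delta^{k-1}$, has no $k$-form component; the integral therefore vanishes. Since the (reduced) snake is a simplicial map, $s_j^*$ of its pullback of $\omega^{big}$ vanishes as well, so every simplex of the pushed-forward chain contributes zero and \eqref{eq:omega-s-deg} follows. (The paper itself disposes of \eqref{eq:omega-s-deg} with a one-line pointer to the first two cases of \eqref{eq:yetbigger}, so it is equally silent on this point; but as written your justification would fail, and the lemma $s_j^*\omega^{big}=0$ is the missing content.)
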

\begin{proof}
Let $q_\bullet(\phi_0,\dots,\phi_{n-1}):K^s_\bullet(W_0|_{U_0})\times(\mathrm I_\bullet)^n\to K^{big}_\bullet(W_n|_{U_n})$ be the simplicial map given by the composition of the snake \eqref{eq:snake-omega}, so that 
$$\omega^s_{W_0|_{U_0}}(\phi_0,\phi_1,\dots,\phi_{n-1})=\mathscr I^n\bigl(q_\bullet(\phi_0,\phi_1,\dots,\phi_{n-1})^*\omega^{big}\bigr).$$
 For convenience, let us use the notation
$$q_\bullet(\phi_0,t_1,\phi_1,t_2,\dots,t_{n-1},\phi_{n-1},t_n):K^s_\bullet(W_0|_{U_0})\to K^{big}_\bullet(W_n|_{U_n})$$
with $t_i\in\mathrm I_\bullet$. Similarly to \eqref{eq:verybig} we have the identities
\begin{multline}\label{eq:yetbigger}
q_\bullet(\phi_0,t_1,\dots,t_{n-1},\phi_{n-1},t_n)=\\
=\begin{cases}
q_\bullet(\phi_1,t_2,\dots,t_{n-1},\phi_{n-1},t_n) & \text{if }\phi_0=id\\
q_\bullet(\phi_0,t_1,\dots,\max(t_{i},t_{i+1}),\dots,\phi_{n-1},t_n) & \text{if }\phi_i=id,\ 1\leq i\leq n-1\\
q_\bullet(\phi_0,t_1,\dots,\phi_i\circ \phi_{i-1},\dots,\phi_{n-1},t_n) & \text{if }t_i=0\text{ identically},\ 1\leq i\leq n-1\\
(\phi_{n-1})_*\circ q_\bullet(\phi_0,t_1,\dots,\phi_{n-2},t_{n-1}) & \text{if }t_n=0\text{ identically}\\
q_\bullet(\phi_i,t_{i+1},\dots,\phi_{n-1},t_n)\circ K^s_\bullet(\phi_0,t_1,\dots,\phi_{i-1}) & \text{if }t_i=1\text{ identically},\ 1\leq i\leq n.
\end{cases}
\end{multline}
Since $\delta\omega^{big}=0$, we have 
$$\delta\omega^s_{W_0|_{U_0}}(\phi_0,\phi_1,\dots,\phi_{n-1})=(\delta\mathscr I^n-(-1)^{n}\mathscr I^n\delta)\bigl( q_\bullet(\phi_0,\phi_1,\dots,\phi_{n-1})^*\omega^{big}\bigr)$$
and Proposition \ref{prop:i} and the last three cases of Equation \eqref{eq:yetbigger} give us Equation \eqref{eq:del-omega-s}. Equation \eqref{eq:omega-s-deg} follows from the first two cases of Equation \eqref{eq:yetbigger} and finally Equation \eqref{eq:omega-s-len} is obvious.
\end{proof}

\begin{rem}[Courant algebroids and Dirac structures]
When $k=1$ then, by \eqref{eq:omega-s-len},  $\omega^s_{W_0|_{U_0}}(\phi_0)=0$, and Equation \eqref{eq:del-omega-s} with $n=1$ becomes
$$\omega^s_{W_0|_{U_0}}=K^s(\phi_0)^*\omega^s_{W_1|_{U_1}}.$$
The first non-trivial case is thus $k=2$. Equation \eqref{eq:del-omega-s} for $n=1$ is
\begin{subequations}\label{eq:CA}
\begin{equation}\label{eq:CA1}
\delta\omega^s_{W_0|_{U_0}}(\phi_0)=\omega^s_{W_0|_{U_0}}-K^s(\phi_0)^*\omega^s_{W_1|_{U_1}}
\end{equation}
and for $n=2$  (using $\omega^s_{W_0|_{U_0}}(\phi_0,\phi_1)=0$)
\begin{equation}
\omega^s_{W_0|_{U_0}}(\phi_0)-\omega^s_{W_0|_{U_0}}(\phi_1\circ\phi_0)+K^s(\phi_0)^*\omega^s_{W_1|_{U_1}}(\phi_1)=\mathscr I K^s(\phi_0,\phi_1)^*\omega^s_{W_2|_{U_2}}.
\end{equation}
\end{subequations}

In particular if we have a Courant algebroid over a manifold $M$, if $Z$ is the corresponding 2-symplectic NQ-manifold (and thus $0\cdot Z=M$) and if $W_i|_{U_i}$ are (isomorphic to) local pieces of $Z$ and $\phi_j$'s are their identifications on the overlaps, Equations \eqref{eq:CA} show us in what sense the symplectic forms $\omega^s_{W_i|_{U_i}}$ agree up to a coherent homotopy.

If $Y\subset Z$ is a Lagrangian NQ-submanifold (i.e.\ a (generalized) Dirac structure of the Courant algebroid), if $W_0|_{U_0}$ is a local piece of $Y$, $W_1|_{U_1}$ a local piece of $Z$, and $\phi_0:W_0|_{U_0}\rsa W_1|_{U_1}$ the inclusion $Y\subset Z$, then we have $\omega^s_{W_0|_{U_0}}=0$ and Equation \eqref{eq:CA1} becomes
$$\delta\omega^s_{W_0|_{U_0}}(\phi_0)=-K^s(\phi_0)^*\omega^s_{W_1|_{U_1}}.$$
The closed 2-form $K^s(\phi_0)^*\omega^s_{W_1|_{U_1}}$ thus doesn't have to vanish, i.e.\ $K^s_k(\phi_0):K^s_k(W_0|_{U_0})\to K^s_k(W_1|_{U_1})$ is not necessarily a Lagrangian embedding, however $K^s(\phi_0)^*\omega^s_{W_1|_{U_1}}$  is  homotopic to zero.
\end{rem}

Theorem \ref{thm:del-omega-s} can be interpreted as $A_\infty$-functoriality.
Let $\mathcal C$ be the category of the NQ-manifolds of the form $W|_U$. Let $F(W|_U)$ denote the cochain complex
$$F(W|_U):=\bigl(\Omega^{2,cl}(K^s_\bullet(W|_U)),\delta\bigr)$$
and for 
$$f:=\Big(W_0|_{U_0}\xrsa{\phi_0} W_1|_{U_1}\xrsa{\phi_1} \cdots \xrsa{\phi_{n-1}} W_n|_{U_n}\Big)$$
let
$$F(f):=\mathscr I^{n-1}\circ K^s(\phi_0,\dots,\phi_{n-1})^*:F(W_n|_{U_n})\to F(W_0|_{U_0})$$
\begin{prop}\label{prop:F-A-inf}
$F$ is a (strictly unital) contravariant $A_\infty$-functor from $\mathcal C$ to the category of cochain complexes, i.e.\ $\deg F(\phi_0,\dots,\phi_{n-1})=1-n$, $F(id)=id$ and $F(\phi_0,\dots,\phi_{n-1})=0$ if $n\geq 2$ and some $\phi_i=id$, and finally
\begin{multline*}
\delta\circ F(\phi_0,\dots,\phi_{n-1})-(-1)^{1-n}F(\phi_0,\dots,\phi_{n-1})\circ\delta=\\
=\sum_{p=1}^{n-2} (-1)^p \bigl(F(\phi_0,\dots,\phi_{p-1})\circ F(\phi_p,\dots,\phi_{n-1})-F(\phi_0,\dots,\phi_p\circ\phi_{p-1},\dots,\phi_{n-1})\bigr).
\end{multline*}
\end{prop}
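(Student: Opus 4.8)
The plan is to treat the three structural properties (degree, strict unitality, vanishing on degenerate chains) as direct consequences of the definitions, and to derive the main $A_\infty$-identity from Proposition~\ref{prop:i} applied to a single pulled-back form. The whole argument is an ``operator-valued'' refinement of the proof of Theorem~\ref{thm:del-omega-s}: there the relevant boundary bookkeeping was carried out for the specific forms $\omega^s$, and here it must be carried out for the pullback of an \emph{arbitrary} closed $2$-form.

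First I would dispose of the easy items. Since $\mathscr I^{n-1}$ lowers the simplicial degree by $n-1$ (by its very signature $\Omega(K_{p+n-1}\times(\mathrm I_{p+n-1})^{n-1})\to\Omega(K_p)$) while the pullback $K^s(\phi_0,\dots,\phi_{n-1})^*$ preserves it, and since the cochain degree in $F(W|_U)=\bigl(\Omega^{2,cl}(K^s_\bullet(W|_U)),\delta\bigr)$ is the simplicial degree (with $\delta$ of degree $+1$), the composite $F(\phi_0,\dots,\phi_{n-1})=\mathscr I^{n-1}\circ K^s(\phi_0,\dots,\phi_{n-1})^*$ has degree $-(n-1)=1-n$. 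For $n=1$ one has $\mathscr I^0=\mathrm{id}$ (the fundamental chain $c_{0,\bullet}=[\Delta^\bullet]$ induces the identity) and $K^s(\mathrm{id})^*=\mathrm{id}$ by the functoriality of $K^s$ at the identity established at the end of Section~\ref{sec:funct}, so $F(\mathrm{id})=\mathrm{id}$. For the vanishing when some $\phi_i=\mathrm{id}$ and $n\geq2$, I would invoke the coherence relations \eqref{eq:verybig}: if $\phi_i=\mathrm{id}$ the map $K^s(\phi_0,\dots,\phi_{n-1})$ factors through a projection that collapses one of the $\mathrm I_\bullet$-factors (either it drops an extremal arrow, or it depends on $t_i,t_{i+1}$ only through $\max(t_i,t_{i+1})$). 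Hence $K^s(\phi_0,\dots,\phi_{n-1})^*\beta$ carries no component containing the full cube top-form $dt_1\cdots dt_{n-1}$, and its image under $\mathscr I^{n-1}$, which extracts precisely that top-cube component and integrates it over $I^{n-1}$, vanishes.

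For the $A_\infty$-relation itself the engine is Proposition~\ref{prop:i} with $m=n-1$, applied to $\alpha=K^s(\phi_0,\dots,\phi_{n-1})^*\beta$ for an arbitrary $\beta\in\Omega^{2,cl}(K^s_\bullet(W_n|_{U_n}))$. Because $K^s(\phi_0,\dots,\phi_{n-1})$ is a simplicial map its pullback commutes with $\delta$, so $\delta\alpha=K^s(\phi_0,\dots,\phi_{n-1})^*\delta\beta$, and the left-hand side of Proposition~\ref{prop:i} becomes exactly the graded commutator $\delta\circ F(\phi_0,\dots,\phi_{n-1})-(-1)^{1-n}F(\phi_0,\dots,\phi_{n-1})\circ\delta$ evaluated on $\beta$. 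It then remains to identify the boundary terms $\mathscr I^{n-2}(\alpha|_{0_r})$ and $\mathscr I^{n-2}(\alpha|_{1_r})$. These I would read off from \eqref{eq:verybig}: on the facet $t_r=1$ the map splits as $K^s(\phi_r,\dots,\phi_{n-1})\circ K^s(\phi_0,\dots,\phi_{r-1})$, and since this facet is itself a product $I^{r-1}\times I^{\,n-r-1}$ of the front and back cubes, a Fubini-type factorization of $\mathscr I^{n-2}$ yields the composition $F(\phi_0,\dots,\phi_{r-1})\circ F(\phi_r,\dots,\phi_{n-1})$; on the facet $t_r=0$ the map composes two adjacent arrows, producing a term of the shape $F(\phi_0,\dots,\phi_r\circ\phi_{r-1},\dots,\phi_{n-1})$. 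Assembling these with the signs $(-1)^{r-1}$ supplied by Proposition~\ref{prop:i}, and using the strict unitality of the previous paragraph to suppress the degenerate contributions, is what reorganizes the total into the asserted right-hand side $\sum_{p=1}^{n-2}(-1)^{p}F(\phi_0,\dots,\phi_{p-1})\circ F(\phi_p,\dots,\phi_{n-1})$.

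I expect the main obstacle to be precisely this final reorganization. Verifying the Fubini factorization of $\mathscr I$ over a split cube, and then reconciling all orientation and Koszul signs so that the two families of boundary terms — the composition-of-morphisms terms from the $t_r=0$ facets and the extremal split terms (e.g.\ the $r=n-1$ contribution $F(\phi_0,\dots,\phi_{n-2})\circ F(\phi_{n-1})$ with $F(\phi_{n-1})=K^s(\phi_{n-1})^*$ a strict chain map) — collapse into the clean $A_\infty$-shape, is where the real care lies. The structural inputs are essentially formal; it is the sign and combinatorial bookkeeping, already foreshadowed in the proof of Theorem~\ref{thm:del-omega-s}, that demands attention.
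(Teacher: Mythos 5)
Your overall strategy coincides with the paper's own (very terse) proof: the paper simply says the claim follows from the coherence relations \eqref{eq:verybig} and from Proposition \ref{prop:i}, and your first two paragraphs execute exactly that plan. The easy items are handled correctly: the degree count, $\mathscr I^0=\mathrm{id}$ together with $(\mathrm{id}_*)^s_\bullet=\mathrm{id}$ for strict unitality, and the collapse-of-a-cube-variable argument for vanishing when some $\phi_i=\mathrm{id}$ (though note that in the simplicial setting the mechanism is not that a ``$dt_1\cdots dt_{n-1}$ component'' is missing, but that the pushforward of the fundamental chain $c_{n-1,\bullet}$ under the $\max$-collapse consists of degenerate simplices cancelling in pairs). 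The application of Proposition \ref{prop:i} with $m=n-1$ to $\alpha=K^s(\phi_0,\dots,\phi_{n-1})^*\beta$, and the identification of the $t_r=1$ facets via a Fubini/Eilenberg--Zilber factorization of $\mathscr I^{n-2}$, are also the right moves.

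The genuine gap is your final step. The composition-of-morphisms terms produced by the $t_r=0$ facets, namely $F(\phi_0,\dots,\phi_r\circ\phi_{r-1},\dots,\phi_{n-1})$, cannot be ``suppressed using strict unitality'' or reorganized away: for non-identity morphisms they are nonzero operators and they survive as irreducible summands. What your computation actually proves is
\begin{multline*}
\delta\circ F(\phi_0,\dots,\phi_{n-1})-(-1)^{1-n}F(\phi_0,\dots,\phi_{n-1})\circ\delta=\\
=\sum_{r=1}^{n-1}(-1)^{r-1}\Bigl(F(\phi_0,\dots,\phi_{r-1})\circ F(\phi_r,\dots,\phi_{n-1})-F(\phi_0,\dots,\phi_r\circ\phi_{r-1},\dots,\phi_{n-1})\Bigr),
\end{multline*}
i.e.\ the standard strictly unital contravariant $A_\infty$-functor relation, in which both families of boundary terms appear --- exactly as they do in the analogous Theorem \ref{thm:del-omega-s}, whose right-hand side contains the terms $\omega^s(\phi_1\circ\phi_0,\dots)$, etc. The case $n=2$ shows your proposed reorganization cannot succeed: there Proposition \ref{prop:i} gives, with no Fubini or sign subtleties,
\[
\delta F(\phi_0,\phi_1)+F(\phi_0,\phi_1)\delta=F(\phi_0)\circ F(\phi_1)-F(\phi_1\circ\phi_0),
\]
whereas the displayed formula you are trying to reach has an empty right-hand side for $n=2$; forcing it would assert $F(\phi_1\circ\phi_0)=F(\phi_0)\circ F(\phi_1)$, i.e.\ strict functoriality, which is precisely what Section \ref{sec:funct} says fails for $K^s_\bullet$. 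In other words, the display in the statement must be read as shorthand for the full $A_\infty$ relation including the $m_2$-terms of $\mathcal C$ (its literal form drops them); your proof should end by exhibiting the identity above, not by trying to make the $t_r=0$ contributions disappear.
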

\begin{proof}
The claim follows from the properties of $K^s(f)$ listed in Equation \eqref{eq:verybig} and from the property of $\mathscr I^m$ given in Proposition \ref{prop:i}.
\end{proof}

Let now ${}^c\mathcal C_{\varpi,k}$ be the cocone category of $\mathcal C_{\varpi,k}$. By definition, it contains $\mathcal C_{\varpi,k}$ as a full subcategory, a unique additional  object $*$, and a unique additional morphism $X\to *$ for any $X\in \mathcal C_{\varpi,k}$. 
Let us extend $F$ from $\mathcal C$ to ${}^c\mathcal C_{\varpi,k}$ as follows: for any $f\in N_p{}^c\mathcal C_{\varpi,k}$ ($p=0,1,\dots$), 
$$f=(X_0\to X_1\to\dots\to X_p),$$ 
let
\begin{align*}
\tilde F(f)&=F(f)\text{ if }X_p\neq*\\
\tilde F(*)&=\R[-k]\\
\tilde F(f)&=\omega^s_{X_0}(X_0\to X_1\to\dots\to X_{p-1})\text{ if $p>0$ and }X_p=*,
\end{align*}
where $\omega^s_{X_0}(X_0\to\dots\to X_{p-1})\in\Omega^{2,cl}(K^s_{k-p}(X_0))$ is understood as a map $\R[-k]\to\Omega^{2,cl}(K^s_\bullet(X_0))$ of degree $-p$.

\begin{thm}
$\tilde F$ is a (strictly unital) contravariant $A_\infty$-functor from ${}^c\mathcal C_{\varpi,k}$ to the category of cochain complexes.
\end{thm}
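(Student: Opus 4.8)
The plan is to verify the defining relations of a strictly unital contravariant $A_\infty$-functor directly for $\tilde F$, organising the simplices of the nerve $N_\bullet\,{}^c\mathcal C_{\varpi,k}$ by their relation to the cocone vertex $*$. Since $*$ is terminal and emits no non-identity morphism, every simplex either lies entirely in the full subcategory $\mathcal C_{\varpi,k}\subset\mathcal C$, or has the form $f=(X_0\to\cdots\to X_{p-1}\to *)$ with $*$ as its final object; no simplex can carry $*$ in an interior slot. This dichotomy lets me treat the two families of structure relations separately and glue the results already established.

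For the simplices avoiding $*$ the functor $\tilde F$ restricts to $F$, so the degree condition, strict unitality, vanishing on chains containing an identity, and the $A_\infty$-relations are all immediate from Proposition \ref{prop:F-A-inf} applied inside $\mathcal C_{\varpi,k}$. For the simplices ending in $*$ I would first settle the bookkeeping axioms. Such a $p$-simplex carries $p-1$ genuine morphisms $\phi_0,\dots,\phi_{p-2}$, and the grading shift $\tilde F(*)=\R[-k]$ is arranged so that $\tilde F(f)=\omega^s_{X_0}(\phi_0,\dots,\phi_{p-2})$, viewed as a map $\R[-k]\to F(X_0)$, has exactly the degree demanded of $\tilde F$ on a $p$-simplex. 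Strict unitality and the vanishing on chains with an interior identity follow from \eqref{eq:omega-s-deg}, while \eqref{eq:omega-s-len} forces $\tilde F(f)=0$ once $p-1\geq k$, i.e.\ once the target $\R[-k]$ runs out of room; together these give the boundedness and unitality required of the functor.

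The substance is the coherence relation on the simplices ending in $*$. Here the differential on $\tilde F(*)=\R[-k]$ vanishes, so the term $\tilde F(f)\circ\delta_{\tilde F(*)}$ drops out and the structure equation collapses on the left to $\delta\,\omega^s_{X_0}(\phi_0,\dots,\phi_{p-2})$. I would then read Theorem \ref{thm:del-omega-s}, in the form \eqref{eq:del-omega-s} with $n=p-1$, as precisely this relation. Its composition terms $\omega^s(\phi_1\circ\phi_0,\dots)\mp\cdots+(-1)^{p-2}\omega^s(\phi_0,\dots,\phi_{p-3})$ are the contributions of the non-extreme faces $d_1,\dots,d_{p-1}$ of $f$ (the last of these absorbing $\phi_{p-2}$ into the arrow to $*$), whereas the terms $-\sum_i(-1)^{i-1}\mathscr I^{\,i-1}\bigl(K^s(\phi_0,\dots,\phi_{i-1})^*\omega^s_{W_i}(\phi_i,\dots)\bigr)$ are exactly the $A_\infty$-compositions $\tilde F(\mathrm{front})\circ\tilde F(\mathrm{back})=F(\phi_0,\dots,\phi_{i-1})\circ\omega^s_{W_i}(\phi_i,\dots)$; the summand $i=1$ accounts for the face $d_0$ composed with $F(\phi_0)$, and the top summand $i=p-1$ for the face $d_p$ (which drops $*$) composed with the symplectic form $\omega^s_{W_{p-1}}$ itself. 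Once this dictionary between \eqref{eq:del-omega-s} and the structure equation is in place, together with Proposition \ref{prop:F-A-inf} for the remaining simplices, all the $A_\infty$-axioms for $\tilde F$ hold and the theorem follows.

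I expect the main obstacle to be the reconciliation of signs and indices across the two settings: matching the factors $(-1)^{i-1}$ of \eqref{eq:del-omega-s} and $(-1)^p$ of Proposition \ref{prop:F-A-inf} with the alternating signs of the simplicial faces and of the binary composition, tracking the Koszul signs introduced by the degree of $\tilde F(f)$ and by the shift $[-k]$, and confirming that each front factor $\mathscr I^{\,i-1}\circ K^s(\phi_0,\dots,\phi_{i-1})^*$ is literally $\tilde F$ of the corresponding front simplex. All the analytic content has already been invested in Theorem \ref{thm:del-omega-s} and Proposition \ref{prop:F-A-inf}; what remains is the purely combinatorial verification that these two families of identities assemble into the coherence of a single functor.
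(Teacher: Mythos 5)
Your proposal is correct and takes essentially the same route as the paper, whose entire proof is the one-line statement that the claim is a combination of Theorem \ref{thm:del-omega-s} and Proposition \ref{prop:F-A-inf}: your dichotomy (simplices lying inside $\mathcal C_{\varpi,k}$ handled by Proposition \ref{prop:F-A-inf}; simplices ending at $*$ handled by Equation \eqref{eq:del-omega-s} together with \eqref{eq:omega-s-len} and \eqref{eq:omega-s-deg}, using that the differential on $\tilde F(*)=\R[-k]$ vanishes) is precisely what that combination amounts to when written out. The term-by-term dictionary you set up between \eqref{eq:del-omega-s} and the $A_\infty$-coherence relation for chains ending at the cocone vertex is the intended reading.
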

\begin{proof}
The claim is a combination of Theorem \ref{thm:del-omega-s} and Proposition \ref{prop:F-A-inf}.
\end{proof}

\end{document}